\newtheorem{theorem}{Theorem}[section]
\newtheorem{lemma}{Lemma}[section]
\newtheorem{assumption}{Assumption}[section]
\newtheorem{proposition}{Proposition}[section]
\newtheorem{corollary}{Corollary}[section]
\newtheorem{algorithm}{Algorithm}[section]
\newcommand{\Label}[1]{\label{#1}{{\mbox{$\;$\fbox{\tiny\tt #1}$\;$}}}}
\renewcommand{\Label}[1]{\label{#1}}
\newcommand{\lc}{\mathrel{\raise2pt\hbox{${\mathop<\limits_{\raise1pt\hbox{\mbox{$\sim$}}}}$}}}
\begin{document}
\title{Convergence Rate  and Quasi-Optimal Complexity of Adaptive Finite Element
Computations for Multiple Eigenvalues
\thanks{This work was partially
supported by the Funds for Creative Research Groups of China under Grant 11021101, the National Basic Research Program of China under Grant 2011CB309703,
 the National Science Foundation of China under  Grants 11101416 and 91330202,
and the National Center for Mathematics and Interdisciplinary Sciences, Chinese Academy of Sciences.}}
\author{
Xiaoying Dai\thanks{LSEC, Institute of Computational Mathematics and Scientific/Engineering Computing, Academy of Mathematics and Systems Science,
Chinese Academy of Sciences, Beijing 100190, China ({daixy@lsec.cc.ac.cn}).}
 \and  Lianhua He\thanks{LSEC, Institute of
Computational Mathematics and Scientific/Engineering Computing, Academy of Mathematics and Systems Science, Chinese Academy of Sciences, Beijing 100190,
China ({helh@lsec.cc.ac.cn}).}
 \and Aihui Zhou\thanks{LSEC, Institute of
Computational Mathematics and Scientific/Engineering Computing, Academy of Mathematics and Systems Science, Chinese Academy of Sciences, Beijing 100190,
China ({azhou@lsec.cc.ac.cn}).}  }

\date{}
\maketitle

\date{}

\maketitle
\begin{abstract}
In this paper, we study an adaptive finite element method for multiple eigenvalue problems of a class of second order elliptic equations. By using some eigenspace approximation technology and its crucial
property which is also presented in this paper,
we extend the results in \cite{dai-xu-zhou08} to multiple eigenvalue problems,
we obtain  both  convergence rate and quasi-optimal complexity of the adaptive finite element eigenvalue approximation.

 \end{abstract}

\noindent {\bf Key words.} Adaptive finite element, a posteriori error estimator,   convergence,
complexity, multiple eigenvalue.

\noindent {\bf 2000 AMS subject classifications.} 65F15, 65N15, 65N25, 65N30, 65N50

\section{Introduction}\setcounter{equation}{0}


Adaptive finite element computation is  efficient in solving partial differential equations and
has been successfully used in scientific and engineering computing. Its numerical analysis has been also derived
much attention from the mathematical community.
Since Babu{\v s}ka and Vogelius \cite{babuska-vogelius84} gave an analysis of an adaptive finite element method (AFEM) for linear
symmetric elliptic problems in one dimension, there has been much investigation on the convergence and complexity of AFEMs in
literature (see, e.g., \cite{binev-dehmen-devore-04,cascon-kreuzer-nochetto-siebert08,dai-xu-zhou08,dorfler96,
garau-morin-zuppa09,stevenson06,stevenson08} and the references
cited therein).
In the context of the finite element approximations of eigenvalue problems, in particular, we note that there are  a number of  works concerning a posteriori error estimates  \cite{becker-r01,duran-03,h-rannacher01,larson00,mao-shen-zhou06,verfurth96}, AFEM
convergence \cite{dai-xu-zhou08,garau-morin11,garau-morin-zuppa09,giani-graham09,he-zhou11} and complexity \cite{dai-xu-zhou08,garau-morin11,he-zhou11}. Except for the convergence
analysis in \cite{garau-morin-zuppa09}, to our best knowledge, there is no any work about convergence
rate and complexity of AFEM for multiple eigenvalue problems. The purpose of this paper is to fill in the gap.

We understand that multiple eigenvalue problems are topic in science  and engineering, such as Hartree-Fock equation  and Kohn-Sham equation used to model ground state electronic structures of molecular systems in quantum chemistry and materials science, in which hundreds of thousands of eigenvalues and their corresponding  eigenfunctions are desired, and among these eigenvalues, most are multiple \cite{dai-zhou11, bris03,  martin04,saad10,shen-zhou06}. While the central computation in solving either Hartree-Fock equation or Kohn-Sham equation is the repeated solution of linear Schr{\" o}dinger type equation, of which adaptive finite element analysis and computation are significant. Hence, we want to study the convergence rate  and complexity of AFEMs for multiple eigenvalue problems and  focus on the following elliptic eigenvalue problems: find $\lambda\in \mathbb{R}$ and $u\in H^1_0(\Omega)$ such
that
\begin{eqnarray*}\Label{elliptic-problem}
\left\{\begin{array}{rl}
-\nabla\cdot (A\nabla u) +cu &= \lambda u \quad \mbox{in} \quad \Omega,\\
\|u\|_{0,\Omega} &= 1\quad\mbox{on}~~\partial\Omega,\end{array}\right.
\end{eqnarray*}
where
$A, c$ are coefficients stated precisely in Section \ref{preliminary}.

We see that the analysis technologies for the convergence rate and complexity of AFEM in literature are only valid for simple eigenvalues and their corresponding eigenfunctions, it can not be applied  directly to multiple eigenvalue
cases. The difficulty lies in that in context of  multiple eigenvalue cases, it is not practicable to figure out the discreted eigenfunctions obtained over different  meshes so as to approximate
the same exact eigenfunctions. As a result, the standard technology of measuring  the error of every eigenfunction does not work well any more,  which results in
the difficulty when analyzing the reduction for error of the approximate eigenfunction over two consecutive meshes.
Instead, we employ the gap between the eigenfunction space and its approximation, which seems natural but requires  some delicate technical tools in analysis. To carry out the analysis of the eigenspaces and their approximations,  in this paper, we introduce a system of some source problems associated with the multiple  eigenvalue problem, for which we also need to generalize the existing results of adaptive finite element approximations of scale problems to a setting of vector version.
By using the similar perturbation argument in
\cite{dai-xu-zhou08,he-zhou11} (see Theorem \ref {thm-eigen-boundary} and Lemma \ref{lemma-bound-eigen}) together
 with eigenfunction space approximation technology and its crucial property (see Lemma \ref{eta-etah} and
 Lemma \ref{mark-eta-etah}) that is also shown in this paper,
  we obtain the convergence rate and quasi-complexity of AFEM for multiple eigenvalue problems.

Now let us give somewhat more detailed but informal description  the main results in this paper. We propose and analyze an adaptive finite element algorithm for multiple eigenvalue problem, Algorithm \ref{algorithm-AFEM-eigen},
which is based on the residual type a posteriori error estimators also designed in  this paper, and prove that, for instance
\begin{itemize}
\item  Under some mild assumption,  the gap, $\delta_{H_0^1(\Omega)}(M(\lambda), M_h(\lambda))$, between the continue eigenspace $M(\lambda)$ and its finite element approximation $M_{h}(\lambda)$ has the following a posteriori estimates (see Theorem \ref{thm-error-estimator-space})
\begin{eqnarray*}
 \delta_{H_0^1(\Omega)}(M(\lambda), M_h(\lambda))   &\lc& \eta_h(U_h, \Omega)\nonumber\\
    \eta^2_h(U_h, \Omega)-
    osc^2_h (U_h, \Omega) &\lc &
  \delta_{H_0^1(\Omega)}^2(M(\lambda), M_h(\lambda)).
 \end{eqnarray*}
\item  Under some reasonable assumptions,  the adaptive finite element approximation eigenspaces will converge to the exact eigenspace with some convergence rate, as shown as follows (see Theorem \ref{thm-convergence-rate-eigenspace})
\begin{eqnarray*}\label{main-result-conc}
\delta^2_{H_0^1(\Omega)}(M(\lambda), M_{h_k}(\lambda)) \lc \alpha^{2k},
  \end{eqnarray*}
where $\alpha \in (0, 1)$ is some constant. Furthermore, if the marked sets are of minimal cardinality, thenthe adaptive finite element approximation eigenspaces have a quasi-optimal complexity as follows (see Theorem \ref{thm-optimal-complexity-eigenspace})
\begin{eqnarray*}
  \delta^2_{H_0^1(\Omega)}(M(\lambda), M_{h_n}(\lambda)) \lc (\#\mathcal{T}_{h_n}
  -\#\mathcal{T}_{h_0})^{-2s}.
  \end{eqnarray*}
\end{itemize}

The paper is organized as follows. In the next section, we shall describe some basic notation and review the existing
results of finite element approximations for a class of linear
second order elliptic source and eigenvalue problems, which will be used in our analysis. In Section \ref{adaptive-algorithm},  we construct the a posteriori error estimators for finite element eigenvalue problems from the relationship between the elliptic eigenvalue approximation and the associated boundary value approximation
 and then design adaptive finite element algorithm for the elliptic eigenvalue problems.
We analyze the
convergence rate quasi-optimal complexity of the adaptive finite element eigenvalue computations in Sections \ref{convergence-sec} and \ref{complexity}, respectively. We present several numerical examples in Section \ref{numerical-experiments} to support  our theory. Finally, we remark how our main results can be expected for computing the first $N$ eigenvalues with both simple and multiple eigenvalues are included, the Steklov eigenvalue problems,  and inexact
numerical solutions.

\section{Preliminaries} \Label{preliminary}\setcounter{equation}{0}
Let $\Omega\subset \mathbb{R}^d(d\ge 1)$ be a polytopic bounded domain. We shall use the standard notation for Sobolev spaces $W^{s,p}(\Omega)$ and their
associated norms and seminorms, see, e.g., \cite{ada,cls}. For $p=2$, we denote $H^s(\Omega)=W^{s,2}(\Omega)$ and $H^1_0(\Omega)=\{v\in H^1(\Omega):
 v\mid_{\partial\Omega}=0\}$, where $v\mid_{\partial\Omega}=0$ is understood in
the sense of trace, $\|\cdot\|_{s,\Omega}= \|\cdot\|_{s,2,\Omega}$, and $(\cdot,\cdot)$ is the standard $L^2$ inner product.
 Throughout this paper, we shall
use $C$ to denote a generic positive constant which may stand for different values at its different occurrences.  For convenience, the symbol $\lc$ will
be used in this paper. The notation that $A\lc B$
 means that $A\le C_1 B$, and the notation $A \cong B$ means $C_2 A \le B \le C_3 A$, where $C_1$, $C_2$, $C_3$ are  some constants  that are
 independent of mesh parameters. All the constants involved are
independent of mesh sizes.

Let $\{\mathcal{T}_h\}$ be a shape regular family of nested conforming meshes over $\Omega$: there exists a constant $\gamma^{\ast}$ such that
\begin{eqnarray*}
\frac{h_T}{\rho_T} \leq \gamma^{\ast} ~~~ \forall T \in \bigcup_h\mathcal{T}_h,
\end{eqnarray*}
where, for each $T\in \mathcal{T}_h$, $h_T$ is the diameter of $T$, and $\rho_T$ is the diameter of the biggest ball contained in $T, h=\max\{h_T: T\in
\mathcal{T}_h\}$. Let $\mathcal{E}_h$ denote the set of interior faces (edges or sides) of $\mathcal{T}_h$.

Let $S^{h,k}(\Omega)$ be a space of continuous functions on $\Omega$ such that for $v\in S^{h,k}(\Omega)$, $v$ restricted to each $T$ is a polynomial of
degree not greater than $k$, namely,
\begin{eqnarray*}
S^{h,k}(\Omega)=\{v\in C(\bar{\Omega}): ~v|_{T}\in P^k_{T} \quad\forall T \in \mathcal{T}_h\},
\end{eqnarray*}
where $P^k_{T}$ is the space of polynomials of degree not greater than a positive integer $k$. Set $S^{h,k}_0(\Omega)=S^{h,k}(\Omega)\cap H^1_0(\Omega)$.
We shall denote $S^{h,k}_0(\Omega)$ by $S^{h}_0(\Omega)$ for simplification of notation afterwards.

\subsection{A linear elliptic boundary value problem}\Label{sc:linear boundary}
In this subsection, we shall present some basic properties of a second order elliptic boundary value problem for vector version and its finite element approximations, which is just the simple extension of the existed results  for scalar version.
These properties will be used in our analysis in the following sections.

Consider the homogeneous boundary value problem:
\begin{equation}\Label{problem}
\left\{\begin{array}{rl}
L u_i &= f_i \,\,\, \mbox{in} \quad \Omega, ~~i=1, \cdots, N,\\[0.2cm]
u_i &= 0\,\,\,\mbox{on}~~\partial\Omega, \end{array}\right.
\end{equation}
where $N$ is a positive integer, $L$ is a linear second order elliptic operator:
\[ Lu =-\nabla \cdot(A\nabla u)+cu\] with $A: \Omega\to \mathbb{R}^{d\times d}$ being piecewise Lipschitz
over initial triangulation  and symmetric positive definite with smallest eigenvalue uniformly bounded away from 0, and $0\le c\in L^{\infty}(\Omega)$.

The weak form of ~(\ref{problem}) reads: find $U\equiv(u_1,\cdots,u_N)\in  (H^1_0(\Omega))^N$ such that
\begin{eqnarray}\Label{variation}
  a(u_i, v_i)=   b(f_i, v_i) \qquad\forall v_i\in H^1_0(\Omega), i=1,\cdots, N,
\end{eqnarray}
where
\begin{eqnarray*}
 a(u, v)=(A\nabla u,\nabla v)+(cu,v)\quad\mbox{and}\quad
 b(u,v)= (u,v).
\end{eqnarray*}
We observe that $a(\cdot,\cdot)$ is a bounded bilinear form over $H^1_0(\Omega)$:
\begin{eqnarray*}
|a(w, v)| \leq C_a \|w\|_{1, \Omega} \|v\|_{1, \Omega} ~~\forall w, v\in H^1_0(\Omega)
\end{eqnarray*}
and for energy norm $\|\cdot\|_{a,\Omega}$, which is defined by $\|w\|_{a,\Omega}=\sqrt{a(w,w)}$, there hold
\begin{eqnarray*}
 c_a \|w\|_{1, \Omega} \leq \|w\|_{a,\Omega} \leq C_a \|w\|_{1, \Omega}
 ~~\forall w\in H^1_0(\Omega),
\end{eqnarray*}
where $c_a$ and $C_a$ are positive constants. We understand that \eqref{variation} is uniquely solvable for any $f_i\in H^{-1}(\Omega)(i=1,\cdots,N)$.

For  $U = (u_1, \cdots, u_N)  \in (H^1_0(\Omega))^N$,   we
shall denote by
\begin{eqnarray*}\label{def-norm-abuse}
\|U\|_{a,\Omega}=\left(\sum_{i=1}^{N}\|u_i\|_{a,\Omega}^2\right)^{1/2}, ~~\mbox{and} ~~ \|U\|_{1,\Omega}=\left(\sum_{i=1}^{N}\|u_i\|_{1,\Omega}^2\right)^{1/2}.
\end{eqnarray*}

For $L^2(\Omega)$ with $\|\cdot\|_{b,\Omega}=\sqrt{b(\cdot,\cdot)}$, we see that there is a unique compact operator  $K: L^2(\Omega) \rightarrow H^1_0(\Omega)$ satisfying
\begin{eqnarray}\label{def-k}
 a(Kw, v) = b(w, v)~~~~\forall w\in L^2(\Omega), v \in H^1_0(\Omega).
\end{eqnarray}

Define the Galerkin-projection $R_h: H^1_0(\Omega) \to V_h\equiv S^h_0(\Omega)$ by
\begin{eqnarray}\Label{Gprojection}
a(u-R_h u, v) =0 \quad\forall u\in H^1_0(\Omega) ~\forall  v\in V_h,
\end{eqnarray}
and apparently
\begin{equation*}\Label{stable}
      \|R_hu\|_{1,\Omega}\lc \|u\|_{1,\Omega} \quad\forall u\in H^1_0(\Omega).
\end{equation*}
If we define the operator $K_h: L^2(\Omega) \rightarrow S^h_0(\Omega)$ as follows:
\begin{eqnarray}\label{def-kh}
 a(K_h w, v) = b(w, v)~~~~\forall w\in L^2(\Omega), v \in S^h_0(\Omega),
\end{eqnarray}
then
\begin{eqnarray*}
K_h = R_h K.
\end{eqnarray*}

The following results can be found in \cite{babuska-osborn89,xuzhou00}.
\begin{proposition}\Label{prop2.1}
Let
\begin{eqnarray*}
\rho_{_{\Omega}}(h)=\sup_{ f\in L^2(\Omega),\|f\|_{b,\Omega}=1} \inf_{v\in V_h}\|Kf-v\|_{1,\Omega}.
\end{eqnarray*}
Then $\rho_{_{\Omega}}(h)\to 0$ as $h\to 0$ and
\begin{eqnarray*}\Label{L2rho}
\|u-R_hu\|_{b,\Omega}\lc\rho_{_\Omega}(h)\|u-R_hu\|_{1,\Omega} \quad\forall u\in H^1_0(\Omega).
\end{eqnarray*}
\end{proposition}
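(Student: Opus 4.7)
The plan is to derive the second inequality by the Aubin--Nitsche duality trick, and to obtain $\rho_{_{\Omega}}(h)\to 0$ by combining the compactness of the solution operator $K$ with the density of the nested finite element spaces $\{V_h\}$ in $H^1_0(\Omega)$.

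For the duality estimate, let $e = u - R_h u$. If $\|e\|_{b,\Omega} = 0$ the bound is trivial; otherwise set $g = e/\|e\|_{b,\Omega}$ so that $\|g\|_{b,\Omega} = 1$. Because $A$ is symmetric and $c$ is real-valued, the form $a(\cdot,\cdot)$ is symmetric. Writing $w = Kg \in H^1_0(\Omega)$, the defining relation (\ref{def-k}) applied with test function $e$ gives
$$
\|e\|_{b,\Omega} = b(g,e) = a(Kg,e) = a(w,e).
$$
Galerkin orthogonality (\ref{Gprojection}) then yields $a(w,e) = a(w-v,e)$ for every $v\in V_h$, so by boundedness of $a$,
$$
\|e\|_{b,\Omega} \,=\, a(w-v,e) \,\le\, C_a\,\|w-v\|_{1,\Omega}\,\|e\|_{1,\Omega} \qquad \forall v\in V_h.
$$
Taking the infimum over $v\in V_h$ and using $\|g\|_{b,\Omega}=1$ together with the definition of $\rho_{_{\Omega}}(h)$ produces the claimed bound $\|e\|_{b,\Omega}\lc \rho_{_{\Omega}}(h)\|e\|_{1,\Omega}$.

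For the convergence $\rho_{_{\Omega}}(h)\to 0$, observe that the unit ball $B = \{f\in L^2(\Omega):\|f\|_{b,\Omega}=1\}$ is bounded and $K$ is compact from $L^2(\Omega)$ to $H^1_0(\Omega)$, hence $K(B)$ is relatively compact in $H^1_0(\Omega)$. By nestedness and shape regularity of $\{\mathcal{T}_h\}$, we have $\inf_{v\in V_h}\|\varphi-v\|_{1,\Omega}\to 0$ pointwise in $\varphi \in H^1_0(\Omega)$ (density of $\bigcup_h V_h$, obtained by approximation via smooth compactly supported functions and a Scott--Zhang-type interpolant). A standard finite $\varepsilon$-net argument on the relatively compact set $K(B)$ upgrades this pointwise convergence to uniform convergence, which is precisely $\rho_{_{\Omega}}(h)\to 0$. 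No serious obstacle is anticipated; the only minor subtlety is the density of $\bigcup_h V_h$ in $H^1_0(\Omega)$ with merely piecewise Lipschitz data, but this is handled by the standard density argument just indicated.
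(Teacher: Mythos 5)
Your proof is correct and uses the standard Aubin--Nitsche duality argument together with compactness of $K$, which is precisely the argument in the references \cite{babuska-osborn89,xuzhou00} that the paper cites; the paper itself gives no proof, only the citation. One small remark: your step $a(w,e)=a(w-v,e)$ uses Galerkin orthogonality $a(e,v)=0$ together with the symmetry of $a(\cdot,\cdot)$ (which holds here since $A$ is symmetric and $c$ is real), so it is worth flagging that symmetry is being invoked.
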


 A standard finite element scheme for (\ref{variation}) reads: find
 $U_h = (u_{1, h}, \cdots, u_{N,h}) \in (V_h)^{N}$ satisfying
 \begin{eqnarray}\label{dis-fem}
    a(u_{i,h}, v_i) =  b(f_i, v_i)    ~~ \forall  v_i \in V_h, i=1,\cdots,N.
\end{eqnarray}

 Let $\mathbb{T}$ denote the class of all conforming refinements by bisection of $\mathcal{T}_{h_0}$. For
$\mathcal{T}_h \in \mathbb{T}$, define the element residual $\tilde{\mathcal{R}}_T(u_{i,h})$ and the jump residual $\tilde{J}_E(u_{i,h})$  by
 \begin{eqnarray*}\label{residual}
  \tilde{\mathcal{R}}_T(u_{i,h}) &=& f_i-L u_{i,h} = f_i +\nabla\cdot(A\nabla u_{i,h})-c u_{i,h}\quad \mbox{in}~ T\in
  \mathcal{T}_h,\\
  \tilde{J}_E(u_{i,h}) &=& -A \nabla u_{i,h}^{+}\cdot \nu^{+} - A \nabla u_{i,h}^{-}\cdot
  \nu^{-} = [[A\nabla u_{i,h}]]_E \cdot \nu_E  ~~~~ \mbox{on}~ E\in \mathcal{E}_h,
\end{eqnarray*}
where $E$ is the  common side of elements $T^+$ and $T^-$ with unit outward normals $\nu^+$ and $\nu^-$, respectively, and $\nu_E=\nu^-$. Let $\omega_T$
be the union of elements sharing a side with $T$ and $\omega_E$ be the union of elements which shares the side $E$, that is, $\omega_E = T^+ \cap T^- .$

For $T\in \mathcal{T}_h$, we define the local error indicator
  $\tilde{\eta}_h(u_{i,h}, T)$ by
  \begin{eqnarray*}\label{error-indicator}
   \tilde{\eta}^2_h(u_{i,h}, T) =h_T^2\|\tilde{\mathcal{R}}_T(u_{i,h})\|_{0,T}^2
   + \sum_{E\in \mathcal{E}_h,E\subset\partial T
   } h_E \|\tilde{J}_E(u_{i,h})\|_{0,E}^2
  \end{eqnarray*}
and the oscillation $\widetilde{osc}_h(u_{i,h},T)$ by
\begin{eqnarray*}\label{local-oscillation}
\widetilde{osc}^2_h(u_{i,h},T) = h_T^2\|\tilde{\mathcal{R}}_{T}(u_{i,h})-\overline{\tilde{\mathcal{R}}_{T}(u_{i,h})}\|_{0,T}^2
   + \sum_{E\in \mathcal{E}_h,E\subset\partial T
   } h_E \|\tilde{J}_E(u_{i,h})-\overline{\tilde{J}_E(u_{i,h})}\|_{0,E}^2,
\end{eqnarray*}
where $h_E$ is the diameter of $E$, $\overline{w}$ is the $L^2$-projection of $w\in L^2(\Omega)$ to polynomials of some degree  on $T$ or $E$.

We define the error
 estimator $\tilde{\eta}_h(u_{i,h}, \Omega)$ and the oscillation  $\widetilde{osc}_h(u_{i,h},\Omega)$ by
 \begin{eqnarray*}\label{error-estimator}
  \tilde{\eta}^2_h(u_{i,h}, \Omega) =  \sum_{T\in \mathcal{T}_h, T \subset \Omega}
  \tilde{\eta}^2_h(u_{i,h}, T) \quad \textnormal{and} \quad
\widetilde{osc}^2_h(u_{i,h}, \Omega) = \sum_{T\in \mathcal{T}_h, T \subset \Omega}
  \widetilde{osc}^2_h(u_{i,h}, T).
  \end{eqnarray*}
For any $U_h=(u_{1,h},\cdots,u_{N,h})\in (V_h)^N$, we set
\begin{eqnarray*}
\tilde{\eta}^2_h(U_h, T)=\sum_{i=1}^N\tilde{\eta}^2_h(u_{i,h}, T) \quad \textnormal{and} \quad \widetilde{osc}^2_h(U_h,
T)=\sum_{i=1}^N\widetilde{osc}^2_h(u_{i,h}, T), ~~ \forall T\in \mathcal{T}_h,
\end{eqnarray*}
and
\begin{eqnarray*}
\tilde{\eta}^2_h(U_h, \Omega)=\sum_{T\in \mathcal{T}_h} \tilde{\eta}^2_h(U_h, T) \quad \textnormal{and} \quad \widetilde{osc}^2_h(U_h,
\Omega)=\sum_{T\in \mathcal{T}_h} \widetilde{osc}^2_h(U_h, \Omega).
\end{eqnarray*}

In our analysis we need the following result\cite{cascon-kreuzer-nochetto-siebert08}.
\begin{lemma}\label{lemma-osc-L}
  There exists a constant $C_{\ast}$ which depends on $A$, regularity constant $\gamma^{\ast}$, and coefficient c,
   such that
  \begin{eqnarray}\label{lemma-osc-L-neq}
    \widetilde{osc}_h(V,T)\leq \widetilde{osc}_h(W,T)+C_{\ast}\|V-W\|_{1,\omega_T},
\quad\forall V,W\in (V_h)^N, ~\forall T \in \mathcal{T}_h.
  \end{eqnarray}
\end{lemma}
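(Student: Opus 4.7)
The plan is to reduce the vector inequality to the scalar version by Minkowski's inequality in $\ell^2$, and then to exploit the linearity of the residuals together with the $L^2$-contraction property of polynomial projections to control the perturbation by a scaled residual estimate of the same flavour as in the scalar AFEM theory (cf.\ Cascon--Kreuzer--Nochetto--Siebert).

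Since $\widetilde{osc}_h^2(V,T)=\sum_{i=1}^N \widetilde{osc}_h^2(v_i,T)$, Minkowski's inequality in $\ell^2$ reduces the task to proving, for each pair of scalar functions $v,w\in V_h$,
\begin{eqnarray*}
\widetilde{osc}_h(v,T)\leq \widetilde{osc}_h(w,T)+C_{\ast}\|v-w\|_{1,\omega_T}.
\end{eqnarray*}
Setting $\varphi=v-w$, the source terms $f$ in $\tilde{\mathcal{R}}_T$ cancel, so $\tilde{\mathcal{R}}_T(v)-\tilde{\mathcal{R}}_T(w)=-L\varphi$ and $\tilde{J}_E(v)-\tilde{J}_E(w)=[[A\nabla\varphi]]_E\cdot\nu_E$. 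The $L^2$-projection onto polynomials is a contraction, so by the triangle inequality $\|\tilde{\mathcal{R}}_T(v)-\overline{\tilde{\mathcal{R}}_T(v)}\|_{0,T}\leq \|\tilde{\mathcal{R}}_T(w)-\overline{\tilde{\mathcal{R}}_T(w)}\|_{0,T}+\|L\varphi\|_{0,T}$, and similarly for each jump. A second Minkowski step inside the definition of $\widetilde{osc}_h$ then yields
\begin{eqnarray*}
\widetilde{osc}_h(v,T)\leq \widetilde{osc}_h(w,T)+\Bigl(h_T^2\|L\varphi\|_{0,T}^2+\sum_{E\subset\partial T}h_E\|[[A\nabla\varphi]]_E\|_{0,E}^2\Bigr)^{1/2}.
\end{eqnarray*}

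It remains to bound the last parenthesis by $C_{\ast}\|\varphi\|_{1,\omega_T}$, using that $\varphi|_{T}\in P^k_T$ is a polynomial on each element. Since $A$ is piecewise Lipschitz over the initial triangulation and $T$ lies inside one initial element, $\|L\varphi\|_{0,T}\lc \|\nabla\varphi\|_{0,T}+\|D^2\varphi\|_{0,T}+\|c\|_{\infty,T}\|\varphi\|_{0,T}$; the polynomial inverse inequality $\|D^2\varphi\|_{0,T}\lc h_T^{-1}\|\nabla\varphi\|_{0,T}$ then gives $h_T^2\|L\varphi\|_{0,T}^2\lc \|\varphi\|_{1,T}^2$. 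For the jump contribution I would apply the scaled polynomial trace inequality $\|\psi\|_{0,E}^2\lc h_E^{-1}\|\psi\|_{0,\omega_E}^2$ to $\psi=A\nabla\varphi$, obtaining $h_E\|[[A\nabla\varphi]]_E\|_{0,E}^2\lc \|\varphi\|_{1,\omega_E}^2$. Summing over $E\subset\partial T$ and combining with the element term delivers the scalar inequality, after which the Minkowski step above lifts it to the vector version.

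The main obstacle will be the uniform bookkeeping of $C_{\ast}$: because $A$ is only \emph{piecewise} Lipschitz over the initial mesh $\mathcal{T}_{h_0}$, the Lipschitz constant to be used on a refined element $T$ is that of $A$ on the ambient initial element, not on $T$ itself; and the shape-regularity constant $\gamma^{\ast}$ must be tracked through the polynomial inverse and trace inequalities so that they remain uniform over all $T\in\bigcup_h\mathcal{T}_h$. Once these dependences are handled, $C_{\ast}$ depends only on $A$, $c$ and $\gamma^{\ast}$, as asserted in the lemma.
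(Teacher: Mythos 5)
The paper gives no proof of this lemma: it is stated as a result cited from Cascon--Kreuzer--Nochetto--Siebert \cite{cascon-kreuzer-nochetto-siebert08} for the scalar case, with the vector extension regarded as routine. So there is no in-paper argument to compare against; your proposal is, however, the standard argument underlying that citation, and it is essentially correct: linearity of the residuals cancels the data term, $I-\overline{(\cdot)}$ is an $L^2$-contraction, two applications of Minkowski (in the weighted $\ell^2$ sense over element and face contributions, and in $\ell^2$ over the components $i=1,\dots,N$) reduce the vector statement to controlling $\bigl(h_T^2\|L\varphi\|_{0,T}^2 + \sum_{E\subset\partial T} h_E\|[[A\nabla\varphi]]_E\|_{0,E}^2\bigr)^{1/2}$ for $\varphi = v-w$ piecewise polynomial, and inverse and trace estimates finish.

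One step is stated loosely. You invoke the polynomial trace inequality $\|\psi\|_{0,E}^2\lc h_E^{-1}\|\psi\|_{0,\omega_E}^2$ with $\psi = A\nabla\varphi$; but $A\nabla\varphi$ is not a polynomial when $A$ is merely (piecewise) Lipschitz, so that inequality does not apply to it directly. Two clean repairs exist: split the jump into its one-sided traces $A^{\pm}\nabla\varphi^{\pm}\cdot\nu$, pull out $\|A\|_{L^\infty}$, and apply the polynomial trace inequality to $\nabla\varphi^{\pm}$, which is a genuine polynomial on each element of $\omega_E$; or use the general scaled trace inequality $\|\psi\|_{0,E}^2\lc h_E^{-1}\|\psi\|_{0,\omega_E}^2 + h_E\|\nabla\psi\|_{0,\omega_E}^2$ and absorb $h_E\|\nabla(A\nabla\varphi)\|_{0,\omega_E}^2$ using boundedness of $\nabla A$ on the ambient initial element together with the inverse estimate $\|D^2\varphi\|_{0,T}\lc h_T^{-1}\|\nabla\varphi\|_{0,T}$. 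Either route keeps $C_{\ast}$ dependent only on $A$, $c$, and $\gamma^{\ast}$, as the lemma claims; the rest of your argument, including the $\ell^2$-Minkowski lift from scalar to vector, is sound.
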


We have the standard  a posteriori error estimates for the finite element approximation of boundary value problems (\ref{problem}) as follows (c.f., e.g.,
\cite{mekchay-nochetto05,morin-nochetto-siebert-02,verfurth96})
\begin{eqnarray}\label{boundary-upper}
  \|u_i - u_{i,h} \|_{a,\Omega} \leq \tilde{C}_1 \tilde{\eta}_h (u_{i,h}, \Omega),
\end{eqnarray}
\begin{eqnarray}\label{boundary-lower}
~~~~~~ \tilde{C}^2_2 \tilde{\eta}^2_h (u_{i,h}, \Omega)- \tilde{C}^2_3 \widetilde{osc}^2_h(u_{i,h}, \Omega) \le \|u_i-u_{i,h}\|_{a,\Omega}^2,
\end{eqnarray}
where $\tilde{C}_1, \tilde{C}_2$ and $ \tilde{C}_3$ are positive constants depending
 on the shape regularity of the mesh $\mathcal {T}_h$.

The adaptive algorithm with {\bf D\"{o}rfler marking strategy} for solving (\ref{dis-fem}) can be stated as follows (c.f. \cite{cascon-kreuzer-nochetto-siebert08}):
\vskip 0.1cm
\begin{algorithm}\label{algorithm-AFEM-bvp}~
 Choose a parameter $0 < \theta <1.$
\begin{enumerate}
\item Pick a given mesh $\mathcal{T}_0$, and let $k=0$.
\item Solve the system (\ref{dis-fem}) on $\mathcal{T}_k$ to get the discrete solution $U_{h_k}\equiv(u_{1,h_k},\cdots, u_{N,h_k})$.
\item Compute local error indictors $\tilde{\eta}_{h_k}(U_{h_k},\tau)$ for all $\tau\in \mathcal{T}_{h_k}$.
\item Construct $\mathcal{M}_{h_k} \subset \mathcal{T}_{h_k}$ by {\bf D\"{o}rfler marking strategy} with parameter
 $\theta$.
\item Refine $\mathcal{T}_{h_k}$ to get a new conforming mesh $\mathcal{T}_{h_{k+1}}$ by Procedure {\bf REFINE}.
\item Let $k=k+1$ and go to 2.
\end{enumerate}
\end{algorithm}
\vskip 0.1cm

{\bf D\"{o}rfler marking strategy}
 in  Algorithm \ref{algorithm-AFEM-bvp} was introduced in \cite{dorfler96,morin-nochetto-siebert-02} when $N=1$. It is used to enforce error reduction  and can be defined as
follows.
\begin{center}
\begin{tabular}{|p{120mm}|}\hline
\begin{center}
{\bf D\"{o}rfler marking strategy}
 \end{center}\\
 Given a parameter $0<\theta < 1$.
\begin{enumerate}
 \item Construct a   subset $\mathcal{M}_{h_k}$ of
 $\mathcal{T}_{h_k}$ by selecting some elements
 in $\mathcal{T}_{h_k}$ such that
\begin{eqnarray*}
 \sum_{T\in \mathcal{M}_{h_k}} \tilde{\eta}_{h_k}^2(U_{h_k}, T)  \geq \theta
  \tilde{\eta}_{h_k}^2(U_{h_k},\Omega).
 \end{eqnarray*}
 \item Mark all the elements in $\mathcal{M}_{h_k}$.
 \end{enumerate}
\\
 \hline
\end{tabular}
\end{center}

As pointed out in \cite{cascon-kreuzer-nochetto-siebert08}, the Procedure {\bf REFINE} here is
 some iterative or recursive bisection (see, e.g., \cite{maubach-95}) of elements with the minimal refinement condition that marked elements are bisected at least once. Given a fixed number $b\geq 1$, for any $\mathcal{T} \in \mathbb{T}$, and a subset $\mathcal{M} \in \{\mathcal{T}_h\}$ of marked elements, $\mathcal{T}_{\ast} = \mbox{\bf REFINE}(\mathcal{T}, \mathcal{M})$ outputs a conforming mesh $\mathcal{T}_{\ast} \in \{\mathcal{T}_h\}$, where at least all elements of $\mathcal{M}$ are bisected $b$ times.
 Define
 \begin{eqnarray}\label{def-R}
 \mathcal{R}_{\mathcal{T}\rightarrow \mathcal{T}_{\ast}} := \mathcal{T} \setminus (\mathcal{T}_{\ast} \cap \mathcal{T}).
 \end{eqnarray}
  $\mathcal{R}_{\mathcal{T}\to \mathcal{T}_{\ast}}$ is the set of refined elements from mesh $\mathcal{T}$ to $\mathcal{T}_{\ast}$.
Obviously, we have that $\mathcal{M} \subset \mathcal{R}_{\mathcal{T}\rightarrow \mathcal{T}_{\ast}}$.

By some primary operation, we can easily extend the corresponding results of  the case  $N=1$ in \cite{cascon-kreuzer-nochetto-siebert08} to vector version as follows, which will be used in our following analysis.

 \begin{theorem}\label{convergence-boundary}
 Let $\{U_{h_k}\}_{k\in \mathbb{N}_0}$ be a sequence of finite element
 solutions of boundary problems produced by
  {\bf Algorithm \ref{algorithm-AFEM-bvp}}.
Then there exist constants ${\tilde\gamma}>0$ and  $\xi\in (0,1)$, depending only on the shape regularity of meshes, the data, and the parameters used by
{\bf Algorithm \ref{algorithm-AFEM-bvp}}, such that for any two consecutive iterates $k$ and $k+1$ we have
\begin{eqnarray*}
\|U-U_{h_{k+1}}\|^2_{a,\Omega} + \tilde{\gamma} \tilde{\eta}^2_{h_{k+1}}(U_{h_{k+1}}, \Omega)
 \leq  \xi^2 \Big( \|U-U_{h_k}\|^2_{a,\Omega} +
\tilde{\gamma} \tilde{\eta}^2_{h_{k}}(U_{h_{k}}, \Omega)\Big).
\end{eqnarray*}
Indeed, the constant ${\tilde\gamma}$ has the following form
\begin{eqnarray}\label{gamma-boundary}
\tilde{\gamma} = \frac{1}{(1 + \delta^{-1}) C_{\ast}^2}
\end{eqnarray}
with some constant $\delta\in (0,1)$.
 \end{theorem}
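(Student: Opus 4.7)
My plan is to reduce the vector statement to the scalar contraction theorem of \cite{cascon-kreuzer-nochetto-siebert08} by exploiting the block-diagonal structure of (\ref{variation}): the operator $L$ and the discrete space $V_h$ are shared across all $N$ components, only the right-hand sides $f_i$ differ. Since $\|U-U_h\|_{a,\Omega}^2$, $\tilde{\eta}_h^2(U_h,\Omega)$, $\widetilde{osc}_h^2(U_h,\Omega)$ and $\|U_{h_{k+1}}-U_{h_k}\|_{1,\Omega}^2$ are all defined by summation over $i$, each scalar identity or inequality for an individual component summed over $i$ delivers its vector analogue. The only non-trivial point is that the D\"orfler step in Algorithm \ref{algorithm-AFEM-bvp} uses the aggregate indicator $\tilde{\eta}_{h_k}^2(U_{h_k},T)=\sum_i\tilde{\eta}_{h_k}^2(u_{i,h_k},T)$ rather than acting componentwise, so the scalar theorem cannot be invoked as a black box; instead the scalar proof must be retraced with summed quantities.

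Concretely I would assemble three ingredients in aggregate form. First, applying the standard Galerkin orthogonality to each scalar equation on the nested pair $V_{h_k}\subset V_{h_{k+1}}$ and summing over $i$ yields
\begin{equation*}
\|U-U_{h_{k+1}}\|_{a,\Omega}^2=\|U-U_{h_k}\|_{a,\Omega}^2-\|U_{h_{k+1}}-U_{h_k}\|_{a,\Omega}^2.
\end{equation*}
Second, summing (\ref{boundary-upper}) and (\ref{boundary-lower}) over $i$ gives the aggregate upper and lower a posteriori bounds used below. Third, I would prove the vector estimator-reduction inequality: for any $V\in(V_{h_{k+1}})^N$, $W\in(V_{h_k})^N$ and $\delta>0$,
\begin{equation*}
\tilde{\eta}_{h_{k+1}}^2(V,\Omega)\le (1+\delta)\Bigl[\tilde{\eta}_{h_k}^2(W,\Omega)-\tfrac{1}{2}\sum_{T\in\mathcal{M}_{h_k}}\tilde{\eta}_{h_k}^2(W,T)\Bigr]+(1+\delta^{-1})C_\ast^2\,\|V-W\|_{1,\Omega}^2,
\end{equation*}
which follows by summing the scalar CKNS estimator-reduction lemma (whose Lipschitz constant $C_\ast$ is the same as in Lemma \ref{lemma-osc-L} and is common to all components since $A$ and $c$ do not depend on $i$).

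The contraction then follows CKNS almost verbatim. Multiply the estimator-reduction inequality at $V=U_{h_{k+1}}$, $W=U_{h_k}$ by $\tilde{\gamma}$ and add to quasi-orthogonality. Choosing $\tilde{\gamma}=1/((1+\delta^{-1})C_\ast^2)$ as in (\ref{gamma-boundary}) causes the perturbation term to be dominated by $\|U_{h_{k+1}}-U_{h_k}\|_{a,\Omega}^2$ (via the equivalence of $\|\cdot\|_{a,\Omega}$ and $\|\cdot\|_{1,\Omega}$), so it is absorbed by the negative term from quasi-orthogonality. D\"orfler marking yields $\sum_{T\in\mathcal{M}_{h_k}}\tilde{\eta}_{h_k}^2(U_{h_k},T)\ge\theta\,\tilde{\eta}_{h_k}^2(U_{h_k},\Omega)$, and a small fraction of $\tilde{\gamma}\tilde{\eta}_{h_k}^2(U_{h_k},\Omega)$ set aside is used to absorb $\|U-U_{h_k}\|_{a,\Omega}^2$ via the aggregate upper bound. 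Tuning $\delta\in(0,1)$ produces a contraction factor $\xi^2<1$ on the combined quantity $\|U-U_{h_k}\|_{a,\Omega}^2+\tilde{\gamma}\tilde{\eta}_{h_k}^2(U_{h_k},\Omega)$. The only real obstacle is bookkeeping: one must check that the D\"orfler threshold $\theta$ and the equivalence constants $c_a,C_a$ enter the final $\xi$ in exactly the same way as in the scalar proof and independently of $N$, which holds because every inequality above scales linearly under summation over $i$.
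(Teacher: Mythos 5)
Your proposal is correct and is essentially the proof the paper has in mind: the paper itself only asserts that Theorem~\ref{convergence-boundary} follows from the scalar contraction theorem of \cite{cascon-kreuzer-nochetto-siebert08} by "some primary operation," and your argument---summing Galerkin orthogonality, the reliability bound, the Lipschitz/estimator-reduction inequality over $i$, and observing that the aggregate D\"orfler marking forces retracing rather than black-box invocation of the scalar theorem---is exactly that extension. The only minor imprecision is writing $\tfrac{1}{2}$ as the element-size reduction factor in the estimator-reduction inequality, where it should be $\lambda=1-2^{-b/d}$ (which equals $\tfrac{1}{2}$ only when $b=d$); this does not affect the structure of the argument or the resulting form of $\tilde{\gamma}$.
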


\begin{lemma}\label{complexity-refine}
 Assume that $\mathcal{T}_{h_0}$
verifies condition (b) of Section 4 in \cite{stevenson08}. Let $\{\mathcal{T}_{h_k}\}_{k\geq 0}$ be any sequence of refinements of $\mathcal{T}_{h_0}$
where $\mathcal{T}_{h_{k+1}}$ is generated from $\mathcal{T}_{h_k}$ by $\mathcal{T}_{h_{k+1}}={\bf REFINE}(\mathcal{T}_{h_k},\mathcal{M}_{h_k})$ with a
subset $\mathcal{M}_{h_k}\subset \mathcal{T}_{h_k}$. Then
\begin{eqnarray*}\label{complexity-estimate}
\#\mathcal{T}_{h_k}-\#\mathcal{T}_{h_0} \lc \sum_{j=0}^{k-1}\#\mathcal{M}_{h_j} \quad \forall k\geq 1
\end{eqnarray*}
is valid,  where the hidden constant depends on $\mathcal{T}_{h_0}$ and b.
Here and hereafter $\#\mathcal{T}$ means the number of elements in $\mathcal{T}$.
\end{lemma}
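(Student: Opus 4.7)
The plan is to observe that this lemma is a purely combinatorial/geometric statement about newest-vertex bisection. The vector dimension $N$ of (\ref{variation}) enters only through the choice of marked sets $\mathcal{M}_{h_k}$; once these sets are fixed, the bound on $\#\mathcal{T}_{h_k}-\#\mathcal{T}_{h_0}$ depends only on the refinement procedure, on $\mathcal{T}_{h_0}$, and on the fixed bisection depth $b$. Hence the desired estimate reduces verbatim to the scalar mesh complexity result of Binev--Dahmen--DeVore and its sharpening due to Stevenson, and nothing from the eigenvalue context needs to be reproved.

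First I would recall the key property of newest-vertex bisection under condition (b) of \cite{stevenson08}: every element produced by any admissible sequence of bisections carries a well-defined generation, and the recursive completion step needed to restore conformity after bisecting an element $T$ propagates only through elements of strictly smaller generation than $T$. This is the geometric fact that rules out arbitrarily long cascades of forced refinements for $d\ge 2$ and is the reason the initial labelling hypothesis is imposed.

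Next I would invoke the charging argument of Stevenson (Theorem 6.1 of \cite{stevenson08}). For every $T \in \mathcal{T}_{h_k}\setminus \mathcal{T}_{h_0}$, one traces the chain of bisections producing $T$ back to some marked element $T'\in \mathcal{M}_{h_j}$ with $0\le j\le k-1$, and charges $T$ to $T'$. Under (b), the total number of elements charged to a single $T'$ is bounded by a constant $C$ depending only on $\mathcal{T}_{h_0}$ and on $b$. Summing this uniform bound over all marked sets yields
\begin{equation*}
\#\mathcal{T}_{h_k}-\#\mathcal{T}_{h_0}\;\le\;C\sum_{j=0}^{k-1}\#\mathcal{M}_{h_j},
\end{equation*}
which is precisely the claim. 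Since the fixed refinement depth $b$ only inflates $C$ by a factor of $b$, no adjustment beyond absorbing this into the hidden constant is required.

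The main obstacle lies not in our lemma but inside Stevenson's combinatorics: proving that the per-marked-element charge stays uniformly bounded across arbitrarily many refinement steps, which is exactly where condition (b) is used and where earlier proofs in the Binev--Dahmen--DeVore framework needed stronger assumptions in higher dimensions. Because \cite{stevenson08} handles this in the scalar case and our vector setting introduces no new geometric complication, my proof would consist of little more than verifying that the hypothesis (b) is met for $\mathcal{T}_{h_0}$ and citing the result.
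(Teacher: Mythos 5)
Your proposal is correct and takes essentially the same approach as the paper: the paper states this lemma without proof, treating it as a direct citation of Stevenson's mesh-complexity theorem (and the corresponding statement in \cite{cascon-kreuzer-nochetto-siebert08}), precisely on the grounds you articulate — the estimate is a purely combinatorial fact about newest-vertex bisection, depending only on $\mathcal{T}_{h_0}$, condition (b), and the refinement depth $b$, and the vector dimension $N$ enters only through the choice of marked sets, not through the counting argument.
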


 \begin{lemma}\label{localized-upper-bound}
Let $u_{h_k,l} \in V_{h_k}$ and $ u_{h_{k+1},l} \in V_{h_{k+1}} (l=1,\cdots,N)$ be discrete solutions of (\ref{dis-fem}) over a conforming mesh
$\mathcal{T}_{h_k}$ and its  refinement $\mathcal{T}_{h_{k+1}}$ with marked element $\mathcal{M}_{h_{k}}$. Let $\mathcal{R} :=R_{\mathcal{T}_{h_k}\to
\mathcal{T}_{h_{k+1}}}$ be the set of refined elements, then the following localized upper bound is valid
\begin{eqnarray*}\label{localized-upper-bound-conc}
 \|U_{h_k} - U_{h_{k+1}} \|^2_{a, \Omega} \leq \tilde{C}_1^2
 \sum_{T \in \mathcal{R}}
    \tilde{\eta}^2_{h_k}(U_{h_k}, T),
\end{eqnarray*}
where $U_{h_k}\equiv (u_{h_k,1}\cdots,u_{h_k,q})$ and $U_{h_{k+1}}\equiv (u_{h_{k+1},1}\cdots,u_{h_{k+1},q})$.
\end{lemma}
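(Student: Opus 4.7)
The plan is to reduce to the scalar case component-by-component and then invoke the standard discrete reliability argument of \cite{cascon-kreuzer-nochetto-siebert08}, built on Galerkin orthogonality together with a carefully chosen Scott--Zhang-type quasi-interpolation. For each $l=1,\ldots,N$ set $e_l := u_{h_{k+1},l} - u_{h_k,l}\in V_{h_{k+1}}$. Subtracting the discrete equations (\ref{dis-fem}) at levels $h_k$ and $h_{k+1}$ and using the inclusion $V_{h_k}\subset V_{h_{k+1}}$ yields the Galerkin orthogonality $a(e_l,v)=0$ for all $v\in V_{h_k}$.

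Next, I would introduce a quasi-interpolation operator $I_{h_k}:V_{h_{k+1}}\to V_{h_k}$ enjoying the usual local approximation estimates $h_T^{-1}\|v-I_{h_k}v\|_{0,T} + \|v-I_{h_k}v\|_{1,T} \lc \|v\|_{1,\omega_T}$ for $T\in\mathcal{T}_{h_k}$, together with the crucial locality property that $(v-I_{h_k}v)|_T=0$ whenever $T$ and all of its neighbors belong to $\mathcal{T}_{h_k}\cap\mathcal{T}_{h_{k+1}}$. In particular, $v-I_{h_k}v$ is supported in a bounded-overlap neighborhood of the refined set $\mathcal{R}$. Using orthogonality I would write $\|e_l\|_{a,\Omega}^2 = a(e_l,e_l-I_{h_k}e_l)$ and then apply element-wise integration by parts to the $u_{h_k,l}$-contribution, producing the familiar residual identity
\begin{eqnarray*}
  a(e_l,\,e_l-I_{h_k}e_l) = \sum_{T\in\mathcal{T}_{h_k}}\int_T \tilde{\mathcal{R}}_T(u_{h_k,l})\,(e_l-I_{h_k}e_l) + \sum_{E\in\mathcal{E}_{h_k}}\int_E \tilde{J}_E(u_{h_k,l})\,(e_l-I_{h_k}e_l).
\end{eqnarray*}
Thanks to the locality of $I_{h_k}$, only elements and edges inside a fixed neighborhood of $\mathcal{R}$ contribute. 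Element-by-element Cauchy--Schwarz, combined with the approximation estimates and bounded patch overlap, then yields
\begin{eqnarray*}
 \|e_l\|_{a,\Omega}^2 \le \tilde{C}_1\left(\sum_{T\in\mathcal{R}}\tilde{\eta}^2_{h_k}(u_{h_k,l},T)\right)^{1/2}\|e_l\|_{a,\Omega},
\end{eqnarray*}
where choosing the same interpolant as in the proof of the reliability bound (\ref{boundary-upper}) allows one to recover the same constant $\tilde{C}_1$. Dividing by $\|e_l\|_{a,\Omega}$, squaring, summing over $l=1,\ldots,N$, and invoking the definitions of $\|U_{h_k}-U_{h_{k+1}}\|_{a,\Omega}^2$ and $\tilde{\eta}^2_{h_k}(U_{h_k},T)$ delivers the claim.

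The main hurdle is conceptually minor but technically delicate: one must check that a Scott--Zhang operator defined through averages localized to unrefined nodes does reproduce $v$ on every element whose star is unchanged by the refinement, so that the sums over $T$ and $E$ honestly collapse to $T\in\mathcal{R}$ with only a uniformly bounded overlap factor. Granted this, the argument is a componentwise repetition of the scalar proof in \cite{cascon-kreuzer-nochetto-siebert08} followed by a straightforward summation over $l$, which is precisely the \emph{primary operation} alluded to in the excerpt.
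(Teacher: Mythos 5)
Your proposal follows the paper's intended route: apply the scalar discrete-reliability argument of \cite{cascon-kreuzer-nochetto-siebert08} componentwise via Galerkin orthogonality, a localized Scott--Zhang interpolant and elementwise integration by parts, then sum over $l$ — which is exactly the ``primary operation'' the paper refers to when extending the $N=1$ result to the vector case. One technical point, however, needs tightening. The locality property you posit for $I_{h_k}$, namely that $(v-I_{h_k}v)|_T=0$ only when $T$ \emph{and all of its neighbors} are unrefined, confines $v-I_{h_k}v$ merely to a one-ring neighborhood of $\mathcal{R}$; after integration by parts the residual and jump sums would then range over that neighborhood, not over $\mathcal{R}$ alone, and there is no general way to dominate the indicators on the extra ring by those on $\mathcal{R}$. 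The bound as stated therefore does not quite follow from your version of the locality property. The stronger, and still achievable, property used in \cite{cascon-kreuzer-nochetto-siebert08} is that $(v-I_{h_k}v)|_T=0$ for \emph{every} $T\in\mathcal{T}_{h_k}\cap\mathcal{T}_{h_{k+1}}$, irrespective of what happens to its neighbors. This is arranged by choosing, for each free node $z$ of $\mathcal{T}_{h_k}$ that lies on some unrefined element, the Scott--Zhang averaging element $T_z$ inside $\mathcal{T}_{h_k}\cap\mathcal{T}_{h_{k+1}}$; then $v|_{T_z}\in P^k_{T_z}$ for $v\in V_{h_{k+1}}$, so the local average reproduces the nodal value $v(z)$, and $I_{h_k}v$ coincides with $v$ nodally on every unrefined $T$. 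With this sharpened locality the sums genuinely collapse to $T\in\mathcal{R}$, and the remainder of your argument (elementwise Cauchy--Schwarz, interpolation estimates with bounded patch overlap, division by $\|e_l\|_{a,\Omega}$, squaring, summation over $l$) is sound and delivers the lemma.
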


 \begin{proposition}\label{complexity-boundary-optimal-marking}
Let $u_{h_k,l} \in V_{h_k}$ and $ u_{h_{k+1},l} \in V_{h_{k+1}} (l=1,\cdots,N)$ be discrete solutions of (\ref{dis-fem}) over a conforming mesh
$\mathcal{T}_{h_k}$ and its  refinement $\mathcal{T}_{h_{k+1}}$ with marked element $\mathcal{M}_{h_{k}}$. Suppose that they satisfy the energy decrease
property
\begin{eqnarray*}\label{optimal-mariking-cond}
\|U-U_{h_{k+1}}\|_{a, \Omega}^2+ \tilde{\gamma}_0 \widetilde{osc}^2_{h_{k+1}}(U_{h_{k+1}}, \Omega) \leq \tilde{\xi}_0^2 \big(\|U-U_{h_k}\|_{a, \Omega}^2
+\tilde{\gamma}_0 \widetilde{osc}^2_{h_{k}}(U_{h_{k}},\Omega)\big)
\end{eqnarray*}
with $\tilde{\gamma}_0>0$ being a constant and $\tilde{\xi}_0^2\in (0,\frac{1}{2})$.
 Then the set $\mathcal{R} :=R_{\mathcal{T}_{h_k}\to
\mathcal{T}_{h_{k+1}}}$  satisfies the D$\ddot{o}$rfler property
\begin{eqnarray*}\label{optimal-mariking-conc}
\sum_{T \in \mathcal{R}
  }  \tilde{\eta}^2_{h_k}(U_{h_k}, T) \geq \tilde{\theta} \sum_{T \in \mathcal{T}_{h_k}}
   \tilde{\eta}^2_{h_k}(U_{h_k}, T)
\end{eqnarray*}
with $\tilde{\theta} = \frac{\tilde{C}_2^2(1-2\tilde{\xi}^2_0)}{\tilde{C}_0 ( \tilde{C}_1^2 + (1 + 2 C_{\ast}^2 \tilde{C}_1^2) \tilde{\gamma}_0)}$, where
$\tilde{C}_0 = \max(1, \frac{\tilde{C}_3^2}{\tilde{\gamma}_0})$.
\end{proposition}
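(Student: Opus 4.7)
The plan is to follow the contrapositive strategy of Casc\'{o}n--Kreuzer--Nochetto--Siebert, adapted to the vector system: an energy-plus-oscillation contraction forces the set of refined elements to already satisfy a D\"{o}rfler condition on the estimator. Set
\[
 A := \|U-U_{h_k}\|_{a,\Omega}^2 + \tilde{\gamma}_0\,\widetilde{osc}_{h_k}^2(U_{h_k},\Omega), \qquad B := \|U-U_{h_{k+1}}\|_{a,\Omega}^2 + \tilde{\gamma}_0\,\widetilde{osc}_{h_{k+1}}^2(U_{h_{k+1}},\Omega),
\]
so the hypothesis reads $B\le\tilde{\xi}_0^2 A$. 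Since $V_{h_k}\subset V_{h_{k+1}}$, Galerkin orthogonality for \eqref{variation} gives $\|U-U_{h_k}\|_{a,\Omega}^2-\|U-U_{h_{k+1}}\|_{a,\Omega}^2=\|U_{h_k}-U_{h_{k+1}}\|_{a,\Omega}^2$, so the hypothesis rearranges to
\[
 (1-\tilde{\xi}_0^2)\,A \;\le\; \|U_{h_k}-U_{h_{k+1}}\|_{a,\Omega}^2 + \tilde{\gamma}_0\bigl(\widetilde{osc}_{h_k}^2(U_{h_k},\Omega)-\widetilde{osc}_{h_{k+1}}^2(U_{h_{k+1}},\Omega)\bigr).
\]

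Next I bound the oscillation gap by splitting $\mathcal{T}_{h_k}=\mathcal{R}\cup(\mathcal{T}_{h_k}\cap\mathcal{T}_{h_{k+1}})$. On an unrefined element $T\in\mathcal{T}_{h_k}\cap\mathcal{T}_{h_{k+1}}$, Lemma \ref{lemma-osc-L} together with Young's inequality (parameter $\delta=1$) gives $\widetilde{osc}_{h_k}^2(U_{h_k},T) \le 2\,\widetilde{osc}_{h_{k+1}}^2(U_{h_{k+1}},T) + 2C_{\ast}^2\|U_{h_k}-U_{h_{k+1}}\|_{1,\omega_T}^2$; summing over such $T$, absorbing the finite patch overlap into the shape-regularity constant, and then re-using the hypothesis in the form $\tilde{\gamma}_0\widetilde{osc}_{h_{k+1}}^2(\Omega)\le B\le\tilde{\xi}_0^2 A$ produces
\[
 \tilde{\gamma}_0\bigl(\widetilde{osc}_{h_k}^2(\Omega)-\widetilde{osc}_{h_{k+1}}^2(\Omega)\bigr) \;\le\; \tilde{\gamma}_0\,\widetilde{osc}_{h_k}^2(U_{h_k},\mathcal{R}) + \tilde{\xi}_0^2 A + 2\tilde{\gamma}_0 C_{\ast}^2\|U_{h_k}-U_{h_{k+1}}\|_{a,\Omega}^2.
\]

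Substituting this into the first display and absorbing the $\tilde{\xi}_0^2 A$ term into the left-hand side (which is where the standing assumption $\tilde{\xi}_0^2<1/2$ is consumed, producing the coefficient $1-2\tilde{\xi}_0^2$), then replacing $\|U_{h_k}-U_{h_{k+1}}\|_{a,\Omega}^2$ via the localized upper bound of Lemma \ref{localized-upper-bound} and using the pointwise estimate $\widetilde{osc}_{h_k}^2(U_{h_k},T)\le\tilde{\eta}_{h_k}^2(U_{h_k},T)$ (immediate from the non-expansivity of the $L^2$-projection), I arrive at
\[
 (1-2\tilde{\xi}_0^2)\,A \;\le\; \bigl(\tilde{C}_1^2 + (1+2C_{\ast}^2\tilde{C}_1^2)\tilde{\gamma}_0\bigr)\sum_{T\in\mathcal{R}}\tilde{\eta}_{h_k}^2(U_{h_k},T).
\]
Finally, the global lower bound \eqref{boundary-lower} summed over $i=1,\dots,N$ yields $\tilde{C}_2^2\,\tilde{\eta}_{h_k}^2(U_{h_k},\Omega)\le\|U-U_{h_k}\|_{a,\Omega}^2+\tilde{C}_3^2\widetilde{osc}_{h_k}^2(U_{h_k},\Omega)\le \tilde{C}_0 A$ with $\tilde{C}_0=\max(1,\tilde{C}_3^2/\tilde{\gamma}_0)$, and dividing produces exactly the D\"{o}rfler property with the claimed constant $\tilde{\theta}$.

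The step I expect to be most delicate is the oscillation comparison on unrefined elements: Lemma \ref{lemma-osc-L} is formulated for two discrete functions living on a common mesh, whereas $U_{h_k}$ and $U_{h_{k+1}}$ a priori sit in different spaces. Even when $T\in\mathcal{T}_{h_k}\cap\mathcal{T}_{h_{k+1}}$, one or more of its edges may have been bisected in $\mathcal{T}_{h_{k+1}}$ by the refinement of a neighbour, so the jump terms in $\widetilde{osc}_{h_k}(U_{h_k},T)$ and $\widetilde{osc}_{h_{k+1}}(U_{h_k},T)$ are indexed differently. Since $U_{h_k}$ is piecewise polynomial on $T$ and on each neighbour, a short computation using the invariance of $L^2$-projections under edge subdivision shows these two quantities are equivalent up to a factor depending only on shape regularity, after which Lemma \ref{lemma-osc-L} applies verbatim on the fine mesh and the bound above follows.
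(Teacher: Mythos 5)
Your argument is correct and is exactly the Casc\'on--Kreuzer--Nochetto--Siebert optimal-marking proof applied componentwise, which is precisely what the paper (which omits the proof and only asserts that the scalar result ``easily extends to vector version'') intends: Galerkin orthogonality, the oscillation split over $\mathcal{R}$ and $\mathcal{T}_{h_k}\cap\mathcal{T}_{h_{k+1}}$, absorption of $\tilde{\xi}_0^2 A$ to produce $1-2\tilde{\xi}_0^2$, the localized upper bound on $\mathcal{R}$, and the global lower bound, yielding the constant $\tilde{\theta}$ exactly. The caveat you raise about the edge-jump terms on unrefined elements is in fact moot: with conforming bisection, an element $T\in\mathcal{T}_{h_k}\cap\mathcal{T}_{h_{k+1}}$ cannot have a subdivided face (that would create a hanging node), so its faces, its mesh-size factors, and hence $\widetilde{osc}_{h_k}(U_{h_k},T)=\widetilde{osc}_{h_{k+1}}(U_{h_k},T)$ agree verbatim, and Lemma \ref{lemma-osc-L} applies on $\mathcal{T}_{h_{k+1}}$ (where both $U_{h_k}$ and $U_{h_{k+1}}$ live, by nestedness) without any shape-regularity fudge factor.
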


\subsection{A linear  eigenvalue problem}\Label{sc:linear eigenvalue}

A number $\lambda$ is called an eigenvalue of the form $a(\cdot,\cdot)$ relative to the form $b(\cdot,\cdot)$ if there is a nonzero function $0\neq u\in
H^1_0(\Omega)$, called an associated eigenfunction, satisfying
\begin{eqnarray}\Label{eigen}
a(u,v)=\lambda b(u,v)~~~~\forall v\in H^1_0(\Omega).
\end{eqnarray}

We see that (\ref{eigen}) has a countable sequence of real eigenvalues
$$
0 < \lambda_1\ <\lambda_2\le\lambda_3\le\cdots
$$
and corresponding eigenfunctions
$$
u_1, u_2, u_3,\cdots,
$$
which can be assumed to satisfy
$$
b(u_i, u_j)=\delta_{ij}, ~i,j=1,2,\cdots
$$
In the sequence $\{\lambda_j\}$, the $\lambda_j$'s are repeated according to geometric multiplicity.

The following property of eigenvalue and eigenfunction approximation is useful (see \cite{babuska-osborn89, babuska-osborn91}).
\begin{proposition}\Label{prop3.2}
Let $(\lambda, u)$ be an eigenpair of (\ref{eigen}). For any $w\in H^1_0(\Omega)\setminus\{0\}$,
\begin{eqnarray*}
\frac{a(w,w)}{b(w,w)}-\lambda=\frac{a(w-u, w-u)}{b(w,w)}- \lambda\frac{b(w-u,w-u)}{b(w,w)}.
\end{eqnarray*}
\end{proposition}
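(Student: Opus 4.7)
The plan is to prove this identity by direct algebraic manipulation, exploiting the bilinearity and symmetry of $a(\cdot,\cdot)$ and $b(\cdot,\cdot)$ together with the eigenvalue equation $a(u,v)=\lambda b(u,v)$ valid for all $v\in H_0^1(\Omega)$. Equivalently, after multiplying through by $b(w,w)$, it suffices to establish the identity
\begin{eqnarray*}
a(w,w)-\lambda b(w,w) \;=\; a(w-u,w-u)-\lambda\, b(w-u,w-u),
\end{eqnarray*}
which I will verify by expanding the right-hand side.

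First I would expand $a(w-u,w-u)$ and $b(w-u,w-u)$ via bilinearity and symmetry,
\begin{eqnarray*}
a(w-u,w-u) &=& a(w,w)-2a(w,u)+a(u,u),\\
b(w-u,w-u) &=& b(w,w)-2b(w,u)+b(u,u).
\end{eqnarray*}
Next, I would apply the defining relation of the eigenpair: choosing successively $v=w$ and $v=u$ in~(\ref{eigen}), we obtain $a(u,w)=\lambda b(u,w)$ and $a(u,u)=\lambda b(u,u)$; by the symmetry of $a$ and $b$ the former yields $a(w,u)=\lambda b(w,u)$. Substituting these two identities into the expansion above, the cross-term $-2a(w,u)$ cancels $+2\lambda b(w,u)$ and the quadratic terms $a(u,u)$ and $-\lambda b(u,u)$ annihilate one another, leaving exactly $a(w,w)-\lambda b(w,w)$ as required. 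Dividing by $b(w,w)\ne 0$ (which is legitimate since $w\ne 0$ and $b$ is an inner product) then yields the stated Rayleigh quotient identity.

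The computation is entirely routine and there is no genuine obstacle; the only point worth flagging is that the identity hinges on using \emph{both} the test function $v=w$ \emph{and} the test function $v=u$ in the weak eigenvalue equation, together with symmetry, so any asymmetry in the forms or any restriction on the admissible test functions would break the argument. Since in our setting $a$ and $b$ are symmetric bounded bilinear forms on $H_0^1(\Omega)\times H_0^1(\Omega)$ and $u,w\in H_0^1(\Omega)$, this presents no difficulty. The resulting identity is the standard Babu\v{s}ka--Osborn expansion and will be invoked later to convert eigenfunction error in the energy norm into eigenvalue error, playing a central role in the a posteriori estimates developed for $\delta_{H_0^1(\Omega)}(M(\lambda),M_h(\lambda))$.
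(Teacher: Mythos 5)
Your proof is correct and is the standard direct verification of this Rayleigh-quotient identity; the paper itself offers no proof but simply cites Babu\v{s}ka--Osborn, where precisely this bilinear expansion and cancellation argument appears. Nothing to add.
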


A standard finite element scheme for (\ref{eigen}) is: find a pair  $(\lambda_h, u_h)$, where $\lambda_h$ is a number and $0\not=u_h\in V_h$,
satisfying
\begin{eqnarray}\Label{fe-eigen}
a(u_h,v)=\lambda_h b(u_h,v)~~~~\forall v\in V_h.
\end{eqnarray}
Let us order the eigenvalues of  (\ref{fe-eigen}) as follows
$$
0<\lambda_{1,h} < \lambda_{2,h}\le \cdots \le\lambda_{n_h,h}, ~~n_h=\mbox{dim} ~V_h,
$$
and assume the corresponding eigenfunctions
$$
u_{1,h}, u_{2,h},\cdots, u_{n_h,h}
$$
satisfy
$$
b(u_{i,h}, u_{j,h})=\delta_{ij}, ~i,j=1,2,\cdots,n_h .
$$

As a consequence of the minimum-maximum principle (see \cite{babuska-osborn91} or \cite{chatelin83}) and Proposition \ref{prop3.2}, we have
\begin{eqnarray}\Label{eigen-err}
\lambda_i\le\lambda_{i,h}\le \lambda_i+C_i\|u_i-u_{i,h}\|^2_{a}, ~~i=1,2,\cdots, n_h.
\end{eqnarray}

Let $\lambda$ be any eigenvalue of (\ref{eigen}) with multiplicity $q$ and   $M(\lambda)$ denote the space of eigenfunctions corresponding to $\lambda$,
that is
\begin{eqnarray*}
M(\lambda)=\{w\in H^1_0(\Omega): w ~\mbox{is an eigenvector of (\ref{eigen}) corresponding to } \lambda\}.
\end{eqnarray*}
Without loss of generality, we assume the index of the eigenvalue $\lambda$ are $k_0 + 1, \cdots, k_0 + q$, that is, $\lambda_{k_0 } < \lambda = \lambda_{k_0 + 1} = \cdots = \lambda_{k_0 + q} < \lambda_{k_0 + q + 1}$. Let $\lambda_{h, l}$ be the $(k_0 + l)$-th eigenvalue of the corresponding discrete problem  (\ref{fe-eigen}), $u_{h, l}$ be the eigenfunction corresponding to $\lambda_{h, l}$,  for $(l=1, \cdots, q)$.
We see that $\lambda$ will be approximated from above by  the Galerkin approximate eigenvalues:
\begin{eqnarray*}
 \lambda \leq \lambda_{h,1} \leq \cdots \leq \lambda_{h, q}.
\end{eqnarray*}
Set
$$
\delta_h(\lambda)=\sup_{w\in M(\lambda), \|w\|_{b,\Omega}=1}\inf_{v\in V_h}\|w-v\|_{a,\Omega},
$$
and
 $M_h(\lambda) =  \mbox{span} \{u_{h, 1}, \cdots, u_{h, q}\}$.

From the definition of operators $K$ and $K_h$, we see that $K$ has eigenvalues
$$
 \mu_1 = \lambda_1^{-1} \geq \mu_2 = \lambda_2^{-1} \ge \mu_3 = \lambda_3^{-1}\geq\cdots \searrow 0
$$
associated with   eigenfunctions
$$
u_1, u_2, u_3,\cdots,
$$
and $K_h$ has eigenvalues
$$
 \mu_{1,h} = \lambda_{1, h}^{-1} \geq \mu_{2, h} = \lambda_{2, h}^{-1} \geq\cdots \ge \mu_{n_h, h} = \lambda_{n_h, h}^{-1}
$$
associated with   eigenfunctions
$$
u_{1, h}, u_{2,h}, \cdots, u_{n_h,h}.
$$

Let $\Gamma$ be a circle in the complex plane centered at $\mu = \lambda^{-1}$ and enclosing no any other eigenvalues of $K$. Then for $h$ sufficiently small, except $\mu_{1,h} = \lambda_{1, h}^{-1}, \mu_{2,h} = \lambda_{2, h}^{-1}, \cdots, \mu_{q,h} = \lambda_{q, h}^{-1}$, there is no any other eigenvalues of $K_h$ contained in $\Gamma$.  Define the spectral projection associated with $K$ and $\mu$ as follows:
\begin{eqnarray}
E = E(\lambda) &=& \frac{1}{2 \pi i} \int_{\Gamma}(z - K)^{-1} dz, \label{operator-E}\\
E_h = E_h(\lambda) &=& \frac{1}{2 \pi i} \int_{\Gamma}(z - K_h)^{-1} dz.\label{operator-Eh}
\end{eqnarray}
It has been proved that $E_h(\lambda): M(\lambda)  \rightarrow  M_h(\lambda)$ is one to one and onto if $h$ is sufficiently small~\cite{babuska-osborn89,babuska-osborn91}.


 The following results are  classical and can be found in literature
(see, e.g., \cite{babuska-osborn89,babuska-osborn91,chatelin83}).

\begin{proposition}\Label{prop3.1}
 Let $\lambda$ be any eigenvalue of (\ref{eigen})
with multiplicity $q$ and   $ u_{h, 1}, \cdots, u_{h, q}$ with $\|u_{h, l}\|_{b,\Omega} = 1$($l = 1, \cdots, q$) be the Galerkin eigenfunctions
corresponding to $\lambda_{h, 1}, \cdots, \lambda_{h, q}$, respectively. There hold
\begin{eqnarray}\label{eigen-f-err}
\|u-E_{h} u\|_{b,\Omega}&\lc& \rho_{_\Omega}(h)\|u- E_{h} u\|_{a,\Omega}\quad \forall u\in M(\lambda), \\\label{eigen-f-err-2}
 \|u_{h, l}-E u_{h, l}\|_{b,\Omega}&\lc&
\rho_{_\Omega}(h)\|u_{h, l}- E u_{h, l}\|_{a,\Omega},\\
\| u_{h, l} - E  u_{h, l}\|_{a,\Omega} &\lc&   \delta_h(\lambda), ~~~\lambda_{h, l} - \lambda \lc \delta_h(\lambda)^2, \label{eigen-f-err-3}
\end{eqnarray}
where $E$ and $E_h$ are orthogonal  projection defined in (\ref{operator-E}) and (\ref{operator-Eh}), respectively.
\end{proposition}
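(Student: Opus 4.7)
The plan is to prove the four estimates in the order (3), (4), (1), (2), as each builds on those before it. All four are classical consequences of Babu\v{s}ka--Osborn spectral approximation theory. The principal tools are the $a$-orthogonality of the spectral projections $E$ and $E_h$ onto $M(\lambda)$ and $M_h(\lambda)$ (which follows from the self-adjointness of $K$ and $K_h$ in the $a$-inner product on the relevant subspaces), Proposition~\ref{prop2.1}, and Proposition~\ref{prop3.2}.

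For (3), note that $Eu_{h,l}$ is the $a$-orthogonal projection of $u_{h,l}$ onto $M(\lambda)$, so $\|u_{h,l}-Eu_{h,l}\|_{a,\Omega}=\mathrm{dist}_{a}(u_{h,l},M(\lambda))$. Since $u_{h,l}\in M_h(\lambda)$ with $\|u_{h,l}\|_{b,\Omega}=1$, $\dim M(\lambda)=\dim M_h(\lambda)=q$, and $E_h|_{M(\lambda)}\colon M(\lambda)\to M_h(\lambda)$ is a bijection for small $h$, the reverse directed gap from $M_h(\lambda)$ to $M(\lambda)$ is of the same order as $\delta_h(\lambda)$ by standard finite-dimensional subspace perturbation. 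An equivalent route applies the resolvent identity
\begin{equation*}
E-E_h=\frac{1}{2\pi i}\int_\Gamma(z-K)^{-1}(K-K_h)(z-K_h)^{-1}\,dz
\end{equation*}
to $u_{h,l}$, using $E_hu_{h,l}=u_{h,l}$ together with $(K-K_h)w=(I-R_h)Kw$ on $M(\lambda)$. Then (4) is one application of Proposition~\ref{prop3.2}: take $w=u_{h,l}$ (so $b(w,w)=1$ and the Rayleigh quotient equals $\lambda_{h,l}$) and the normalized eigenfunction $u=Eu_{h,l}/\|Eu_{h,l}\|_{b,\Omega}\in M(\lambda)$, which is well defined for small $h$; dropping the non-positive term gives $\lambda_{h,l}-\lambda\le\|u_{h,l}-u\|_{a,\Omega}^{2}\lc\delta_h(\lambda)^{2}$ via (3) together with $\|Eu_{h,l}\|_{b,\Omega}\to 1$.

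For (1), with $u\in M(\lambda)$ so that $Ku=\mu u$ with $\mu=1/\lambda$, the resolvent identity yields the representation
\begin{equation*}
u-E_hu=\frac{1}{2\pi i}\int_\Gamma(z-\mu)^{-1}(z-K_h)^{-1}(K-K_h)u\,dz,
\end{equation*}
so $\|u-E_hu\|_{b,\Omega}\lc\|(K-K_h)u\|_{b,\Omega}=\mu\|(I-R_h)u\|_{b,\Omega}$. Proposition~\ref{prop2.1} then gives $\|(I-R_h)u\|_{b,\Omega}\lc\rho_\Omega(h)\|(I-R_h)u\|_{a,\Omega}\le\rho_\Omega(h)\|u-E_hu\|_{a,\Omega}$, since $E_hu\in V_h$ is admissible in the best-approximation infimum that equals $\|(I-R_h)u\|_{a,\Omega}$. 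For (2), a shifted Aubin--Nitsche duality is the clean route: set $\phi=u_{h,l}-Eu_{h,l}\in M(\lambda)^{\perp}$ and let $\tilde\psi\in M(\lambda)^{\perp}$ solve $a(\tilde\psi,v)-\lambda b(\tilde\psi,v)=b(\phi,v)$ for all $v\in H^{1}_{0}(\Omega)$, so that $\|\phi\|_{b,\Omega}^{2}=a(\tilde\psi,\phi)-\lambda b(\tilde\psi,\phi)$. Choose the corrected test function $v_h=R_h\tilde\psi-\alpha u_{h,l}\in V_h$ with $\alpha=b(R_h\tilde\psi,u_{h,l})$, so that $b(v_h,u_{h,l})=0$; combined with the identity $a(u_{h,l},\phi)-\lambda b(u_{h,l},\phi)=\lambda_{h,l}-\lambda$ (obtained by expanding against $u_{h,l}$ and $Eu_{h,l}$ separately) this gives $a(v_h,\phi)-\lambda b(v_h,\phi)=0$, hence
\begin{equation*}
\|\phi\|_{b,\Omega}^{2}=a(\tilde\psi-v_h,\phi)-\lambda b(\tilde\psi-v_h,\phi).
\end{equation*}
Since $\tilde\psi=K(\lambda\tilde\psi+\phi)$ lies in the range of $K$ applied to an $L^{2}$ source, Proposition~\ref{prop2.1} furnishes $\|\tilde\psi-R_h\tilde\psi\|_{a,\Omega}\lc\rho_\Omega(h)\|\phi\|_{b,\Omega}$ and a further factor of $\rho_\Omega(h)$ in the $b$-norm; the correction $|\alpha|$ is of higher order in $\|\phi\|_{b,\Omega}$ thanks to $b(\tilde\psi,u_{h,l})=b(\tilde\psi,\phi)$, which follows from $\tilde\psi$ being $b$-orthogonal to $M(\lambda)$. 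A short bootstrap in $\rho_\Omega(h)\to 0$ then yields (2).

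The main obstacle I expect is precisely the duality step for (2): a naive choice $v_h=R_h\tilde\psi$ leaves a cross term $(\lambda_{h,l}-\lambda)b(R_h\tilde\psi,u_{h,l})$ of size $\delta_h(\lambda)^{2}\|\phi\|_{b,\Omega}$ on the right-hand side, which by (4) and (3) cannot be absorbed into the desired $\rho_\Omega(h)\|\phi\|_{a,\Omega}$ form without an unavailable lower bound on $\|\phi\|_{a,\Omega}$. The corrected test function $v_h=R_h\tilde\psi-\alpha u_{h,l}$ removes this cross term exactly, at the cost of controlling additional $\alpha$-contributions, which are of higher order in $\rho_\Omega(h)$ and $\|\phi\|_{b,\Omega}$ and can be bootstrapped away for small $h$.
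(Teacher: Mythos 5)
The paper does not prove Proposition~\ref{prop3.1}: it introduces it with the remark that ``the following results are classical and can be found in literature'' and cites \cite{babuska-osborn89,babuska-osborn91,chatelin83}. So there is no in-paper argument for you to match; what can be checked is whether your reconstruction agrees with the standard Babu\v{s}ka--Osborn line of proof, and whether it is internally sound.

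Your sketch is essentially the standard one and it does close, with one place where the residual bookkeeping deserves more care than the closing sentence ``a short bootstrap in $\rho_\Omega(h)\to 0$'' suggests. Estimates (3) and (4): fine. The $a$-orthogonality of $E$ (self-adjointness of $K$ in the $a$-inner product) plus Lemma~\ref{thm-dxy-dyx} and Lemma~\ref{Eh-Rh} give (3); then Proposition~\ref{prop3.2} with $w=u_{h,l}$ and the normalized $u=Eu_{h,l}/\|Eu_{h,l}\|_{b}$, dropping the nonpositive term, gives (4) once you use (3) to control $\|Eu_{h,l}\|_b\to 1$ (you do not need (2) here). Estimate (1): the resolvent-identity route $u-E_hu=\frac{1}{2\pi i}\int_\Gamma(z-\mu)^{-1}(z-K_h)^{-1}(K-K_h)u\,dz$ with $(K-K_h)u=\mu(I-R_h)u$ and uniform boundedness of $(z-K_h)^{-1}$ on $\Gamma$, followed by Proposition~\ref{prop2.1} and $\|u-R_hu\|_{a}\le\|u-E_hu\|_{a}$, is exactly the classical argument and is correct.

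For (2), your diagnosis of the obstacle is right and the corrected test function $v_h=R_h\tilde\psi-\alpha u_{h,l}$ with $\alpha=b(R_h\tilde\psi,u_{h,l})$ is the right fix: $a(v_h,\phi)-\lambda b(v_h,\phi)=(\lambda_{h,l}-\lambda)b(v_h,u_{h,l})=0$. What you still owe is the estimate of the compensating term $\alpha\big(a(u_{h,l},\phi)-\lambda b(u_{h,l},\phi)\big)=\alpha(\lambda_{h,l}-\lambda)$. Splitting $\alpha=b(\tilde\psi,\phi)+b(R_h\tilde\psi-\tilde\psi,u_{h,l})$ and bounding the second piece crudely by Proposition~\ref{prop2.1} gives $\rho_\Omega^2(h)\|\phi\|_b$, so after dividing by $\|\phi\|_b$ you are left with an additive remainder $\rho_\Omega^2(h)\delta_h(\lambda)^2$ that cannot be absorbed into $\rho_\Omega(h)\|\phi\|_{a}$ without a lower bound on $\|\phi\|_a$ --- precisely the phenomenon you warned about for the naive $v_h$. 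The fix is to estimate $b(R_h\tilde\psi-\tilde\psi,u_{h,l})$ more sharply by writing $u_{h,l}=Eu_{h,l}+\phi$ and, for the $Eu_{h,l}$-part, using that $Eu_{h,l}$ is an exact eigenfunction together with Galerkin orthogonality: $b(R_h\tilde\psi-\tilde\psi,Eu_{h,l})=\lambda^{-1}a(R_h\tilde\psi-\tilde\psi,Eu_{h,l})=-\lambda^{-1}a(\tilde\psi-R_h\tilde\psi,Eu_{h,l}-u_{h,l})$, so $|b(R_h\tilde\psi-\tilde\psi,Eu_{h,l})|\lc\rho_\Omega(h)\|\phi\|_{b}\|\phi\|_{a}$. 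With this refinement $|\alpha|\lc\|\phi\|_b^2+\rho_\Omega(h)\|\phi\|_{a}\|\phi\|_b$, and then $|\alpha(\lambda_{h,l}-\lambda)|\lc\delta_h(\lambda)^2\big(\|\phi\|_b^2+\rho_\Omega(h)\|\phi\|_a\|\phi\|_b\big)$ absorbs cleanly, yielding $\|\phi\|_b\lc\rho_\Omega(h)\|\phi\|_a$ for $h$ small. (Alternatively you may replace the use of (4) by the exact identity $\lambda_{h,l}-\lambda=\|\phi\|_a^2-\lambda\|\phi\|_b^2$, which you have already derived, and the absorption is again immediate.) With this detail supplied, the proposal is a correct reconstruction of the classical proof that the paper refers to but does not spell out.
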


We can easily obtain the following two corollaries, which will be used in our following analysis.
\begin{corollary}\label{coro2.1}
 For any $u\in M(\lambda)$ with $\|u\|_b = 1$, we have
 \begin{eqnarray}\label{Ehu-bnorm}
1 - C \rho_{\Omega}(h) \delta_h(\lambda) \leq \|E_h u \|_b^2 \leq 1,
\end{eqnarray}
where $C$ is some constant not depending on $h$.
\end{corollary}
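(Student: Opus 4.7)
The plan is to split the corollary into its upper and lower bounds and to exploit the fact that $E_h$ is a $b(\cdot,\cdot)$-orthogonal projection onto $M_h(\lambda)$.

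First, I would establish that $K_h$ is self-adjoint with respect to $b(\cdot,\cdot)$. Applying the defining identity \eqref{def-kh} twice together with the symmetry of $a(\cdot,\cdot)$ gives, for all $w_1,w_2\in L^2(\Omega)$,
\[
 b(K_h w_1, w_2) = a(K_h w_2, K_h w_1) = a(K_h w_1, K_h w_2) = b(w_1, K_h w_2).
\]
Since $K_h$ is $b$-self-adjoint, its generalized eigenspaces coincide with its eigenspaces and the contour integral \eqref{operator-Eh} represents the $b(\cdot,\cdot)$-orthogonal projection from $L^2(\Omega)$ onto $M_h(\lambda)$. Therefore $\|E_h u\|_b\le\|u\|_b=1$, which is the upper bound.

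For the lower bound, orthogonality gives the Pythagorean identity
\[
 \|E_h u\|_b^2 \;=\; \|u\|_b^2 - \|u - E_h u\|_b^2 \;=\; 1 - \|u - E_h u\|_b^2.
\]
From Proposition \ref{prop3.1}, inequality \eqref{eigen-f-err}, we have $\|u - E_h u\|_b \lc \rho_{_\Omega}(h)\,\|u - E_h u\|_a$. Combining this with the classical Babu\v{s}ka--Osborn estimate $\|u - E_h u\|_a \lc \delta_h(\lambda)$ for $u\in M(\lambda)$ with $\|u\|_b=1$ (which I would either cite from the references already used for Proposition \ref{prop3.1}, or derive via $R_h u = \lambda K_h u$ together with the spectral decomposition of $K_h$ to compare $R_h u$ with $E_h u$), we obtain
\[
 \|u - E_h u\|_b^2 \;\lc\; \rho_{_\Omega}(h)^2\,\delta_h(\lambda)^2 \;\lc\; \rho_{_\Omega}(h)\,\delta_h(\lambda),
\]
where the last inequality holds because $\rho_{_\Omega}(h)\delta_h(\lambda)\to 0$ as $h\to 0$ so this product is uniformly bounded. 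This yields the desired lower bound $\|E_h u\|_b^2 \ge 1 - C\rho_{_\Omega}(h)\delta_h(\lambda)$.

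The only mildly delicate point is the identification of $E_h$ as a $b$-orthogonal projection, which rests on the $b$-self-adjointness computation above; once this is in place, the rest is a direct combination of Pythagoras, estimate \eqref{eigen-f-err}, and the standard eigenspace $a$-norm approximation result. The auxiliary fact $\|u-E_h u\|_a\lc\delta_h(\lambda)$ is the one step not written out explicitly in Proposition \ref{prop3.1} (which states the analogous bound for $u_{h,l}-Eu_{h,l}$ in the opposite direction), but it is classical and, if necessary, can be proved by invoking the Galerkin projection $R_h$ and the relation $R_h = \lambda K_h$ on $M(\lambda)$.
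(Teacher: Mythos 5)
Your proof is correct and follows essentially the same approach as the paper: both rest on the $b$-orthogonality of $E_h$, estimate \eqref{eigen-f-err}, and the bound $\|u-E_hu\|_{a,\Omega}\lc\delta_h(\lambda)$ (which the paper also invokes without spelling out, and which follows from Lemma \ref{Eh-Rh} and the definition of $\delta_h(\lambda)$). The only cosmetic difference is that you use the Pythagorean identity, giving the (sharper) quadratic remainder $\|u-E_hu\|_b^2$ which you then weaken, whereas the paper expands $b(E_hu,E_hu)=1+b(E_hu-u,u)$ and applies Cauchy--Schwarz to reach the stated linear bound directly; these are algebraically equivalent uses of the same orthogonality.
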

\begin{proof}
On the one hand, since $E_h$ is an orthogonal projection, we get
\begin{eqnarray}\label{coro-ineq1}
\|E_h u\|_b \leq \|E_h\| \|u \|_b = 1.
\end{eqnarray}
On the other hand, we have
 \begin{eqnarray}\label{coro-ineq2}
 b(E_h u, E_h u) &=& b(E_h u - u + u, E_h u - u + u) \nonumber\\ 
 &=& 1 + b(E_h u -u, E_h u) + b(E_h u - u, u)  = 1 + b(E_h u - u, u),
\end{eqnarray}
where the fact that $E_h $ is an orthogonal projection is used in the last equation.
We can easily obtain from the Proposition \ref{prop3.1} that
\begin{eqnarray}\label{coro-ineq3}
|b(E_h u - u, u)| \leq C \rho_{\Omega}(h) \delta_h(\lambda),
\end{eqnarray}
here, $C$ is some constant not depending on $h$.
Combining (\ref{coro-ineq1}), (\ref{coro-ineq2}), and (\ref{coro-ineq3}), we obtain the conclusion.
\end{proof}

\begin{corollary}\label{coro2.2}
 For any $u_i, u_j \in M(\lambda)$ with $b(u_i, u_j)  = \delta_{ij} ~(i,j=1,2\cdots,q)$, there holds
 \begin{eqnarray}\label{Ehu-bnorm}
 b(E_h u_i, E_h u_j) = \delta_{i j} + \mathcal{O}(\rho_{\Omega}(h) \delta_h(\lambda)).
\end{eqnarray}
\end{corollary}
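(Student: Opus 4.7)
The plan is to mimic the proof of Corollary \ref{coro2.1}, but polarized so that it handles two distinct eigenfunctions $u_i, u_j$ rather than a single $u$. Specifically, I would write each $E_h u_k = u_k + (E_h u_k - u_k)$ for $k=i,j$, and expand $b(E_h u_i, E_h u_j)$ by bilinearity into four pieces: the principal term $b(u_i, u_j) = \delta_{ij}$, two mixed terms $b(u_i, E_h u_j - u_j)$ and $b(E_h u_i - u_i, u_j)$, and a quadratic remainder $b(E_h u_i - u_i, E_h u_j - u_j)$.

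The next step is to control the perturbation $\|E_h u_k - u_k\|_{b,\Omega}$. By \eqref{eigen-f-err} of Proposition \ref{prop3.1}, we have
\begin{eqnarray*}
\|u_k - E_h u_k\|_{b,\Omega} \lc \rho_{\Omega}(h) \|u_k - E_h u_k\|_{a,\Omega},
\end{eqnarray*}
and the standard spectral approximation estimate $\|u_k - E_h u_k\|_{a,\Omega} \lc \delta_h(\lambda)$ for $u_k \in M(\lambda)$ with unit $b$-norm (which underlies \eqref{eigen-f-err-3}) then gives $\|u_k - E_h u_k\|_{b,\Omega} \lc \rho_{\Omega}(h) \delta_h(\lambda)$.

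Applying Cauchy--Schwarz in $b(\cdot,\cdot)$ with $\|u_k\|_{b,\Omega} = 1$, the two cross terms are bounded by $\rho_{\Omega}(h) \delta_h(\lambda)$, while the quadratic remainder is bounded by $(\rho_{\Omega}(h) \delta_h(\lambda))^2$, which is absorbed into the $\mathcal{O}(\rho_{\Omega}(h) \delta_h(\lambda))$ term. Assembling the four pieces yields the claim. There is no real obstacle here: the identity $b(u_i, u_j) = \delta_{ij}$ eliminates the main term exactly, and the rest is an immediate application of Proposition \ref{prop3.1} with Cauchy--Schwarz, completely parallel to the single-function argument in Corollary \ref{coro2.1}.
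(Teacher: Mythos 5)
Your proposal is correct, and it reaches the same conclusion, but it does not follow the paper's argument exactly. The paper expands in the same way but then invokes the fact that $E_h$ is a $b$-orthogonal projection to kill one of the cross terms identically: since $(I-E_h)u_i$ is $b$-orthogonal to the range of $E_h$, the combination $b(E_h u_i - u_i, E_h u_j - u_j) + b(E_h u_i - u_i, u_j) = b(E_h u_i - u_i, E_h u_j)$ vanishes exactly, leaving only the single term $b(u_i, E_h u_j - u_j)$ to estimate. You instead keep all three error pieces --- both mixed terms and the quadratic remainder --- and control each by Cauchy--Schwarz together with the estimate $\|u_k - E_h u_k\|_{b,\Omega} \lc \rho_\Omega(h)\,\delta_h(\lambda)$ from Proposition~\ref{prop3.1}. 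Both routes are valid; yours is slightly more elementary (it never uses the projection structure of $E_h$, only the smallness of $E_h u_k - u_k$ in the $b$-norm), at the cost of three estimates rather than one, whereas the paper's use of orthogonality yields a cleaner identity before any inequality is invoked. Either way the quadratic remainder $\mathcal{O}((\rho_\Omega(h)\delta_h(\lambda))^2)$ is dominated by the linear term, so the conclusion is unaffected.
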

\begin{proof}
Note that
 \begin{eqnarray}\label{coro-ineq2}
 b(E_h u_i, E_h u_j) &=& b(E_h u_i - u_i + u_i, E_h u_j - u_j + u_j)\nonumber\\
 &=& \delta_{ij} + b(E_h u_i -u_i, E_h u_j) + b(u_i, E_h u_j - u_j)  = \delta_{ij} + b(u_i, E_h u_j - u_j).
\end{eqnarray}
We obtain from  proposition \ref{prop3.1} that
\begin{eqnarray}\label{coro-ineq3}
|b(u_i, E_h u_j - u_j)| \lc \rho_{\Omega}(h) \delta_h(\lambda),
\end{eqnarray}
which finishes the proof.
\end{proof}

The following results will be used in our analysis~\cite{babuska-osborn89}.
\begin{lemma}\label{Eh-Rh}
There is a constant $C$ independent of $h$, such that for any $u\in M(\lambda)$
\begin{eqnarray}\label{lemma-Eh-Rh-conc}
1\leq \frac{\|u - E_h  u\|_{a, \Omega}}{\|u - R_h u\|_{a, \Omega}} \leq 1 + C \nu(h),
\end{eqnarray}
where $\nu(h)$ is defined as follows:
\begin{eqnarray}\label{def-nu}
\nu(h)=\sup_{ f\in H_0^1(\Omega),\|f\|_{a,\Omega}=1} \inf_{v\in V_h}\|Kf-v\|_{a,\Omega}.
\end{eqnarray}
\end{lemma}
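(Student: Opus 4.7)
The plan is to split the argument into three stages: an immediate lower bound from Galerkin optimality, a Pythagorean decomposition that reduces the upper bound to a spectral-projection estimate, and a Dunford-integral calculation that supplies the $\nu(h)$ factor.

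\textbf{Stage 1 (lower bound).} By the properties of $E_h$ recalled just before this lemma, $E_h u \in M_h(\lambda)\subset V_h$, so it is an admissible competitor for the Galerkin best-approximation problem solved by $R_h u$. The defining orthogonality of $R_h$ in (\ref{Gprojection}) gives
\begin{eqnarray*}
\|u-R_h u\|_{a,\Omega}\leq \|u-E_h u\|_{a,\Omega},
\end{eqnarray*}
which is the lower bound $1$ in (\ref{lemma-Eh-Rh-conc}).

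\textbf{Stage 2 (Pythagoras).} Since $R_h u - E_h u \in V_h$ and $a(u-R_h u,v)=0$ for every $v\in V_h$, Galerkin orthogonality yields
\begin{eqnarray*}
\|u-E_h u\|_{a,\Omega}^2 = \|u-R_h u\|_{a,\Omega}^2 + \|R_h u - E_h u\|_{a,\Omega}^2.
\end{eqnarray*}
Consequently the upper bound in (\ref{lemma-Eh-Rh-conc}) follows at once from
\begin{eqnarray*}
\|R_h u-E_h u\|_{a,\Omega} \lc \nu(h)\,\|u-R_h u\|_{a,\Omega},
\end{eqnarray*}
through the elementary inequality $\sqrt{1+C^2\nu(h)^2}\leq 1+C\nu(h)$.

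\textbf{Stage 3 (spectral estimate).} To prove the last inequality I would use the Dunford representation (\ref{operator-Eh}) together with the identity $K_h = R_h K$. Because $u\in M(\lambda)$ satisfies $Ku=\mu u$ with $\mu=\lambda^{-1}$, one has $K_h u = R_h Ku = \mu R_h u$, whence $(K_h-\mu I)u = \mu(R_h u - u)$. The resolvent identity $(z-K_h)^{-1}-(z-\mu)^{-1}I = (z-\mu)^{-1}(z-K_h)^{-1}(K_h-\mu I)$ applied to $u$, together with integration around $\Gamma$ and the fact that $\frac{1}{2\pi i}\int_\Gamma (z-\mu)^{-1}\,dz = 1$, gives
\begin{eqnarray*}
E_h u - u = \frac{\mu}{2\pi i}\int_\Gamma (z-\mu)^{-1}(z-K_h)^{-1}(R_h u - u)\,dz.
\end{eqnarray*}
Subtracting $R_h u - u$ and estimating this operator-valued integral using the uniform boundedness of $(z-K_h)^{-1}$ on $\Gamma$ (valid for $h$ small enough because the spectrum of $K_h$ converges to that of $K$) together with the mapping property $\|(K-K_h)w\|_{a,\Omega}\lc \nu(h)\|w\|_{a,\Omega}$ arising from the definition (\ref{def-nu}) produces the sought factor $\nu(h)$.

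The hard part is Stage 3: one must carefully distinguish the $L^2$- and $a$-norm mapping properties of $K-K_h$ and of the resolvent so that exactly one factor of $\nu(h)$ is extracted; a coarser estimate leaves no gain, while pushing duality on both sides would give $\nu(h)^2$. Once this operator-theoretic book-keeping is in place, the Pythagorean identity of Stage 2 converts it into the sharp bound (\ref{lemma-Eh-Rh-conc}).
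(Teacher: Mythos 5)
First, note that the paper does not actually prove Lemma \ref{Eh-Rh}; it is quoted with the citation to Babu{\v s}ka--Osborn (1989), so there is no in-paper proof to compare against. Measured against what a correct argument would have to contain, your Stages~1 and~2 are correct and standard: $E_h u\in V_h$ gives the lower bound by Galerkin best-approximation, and the Pythagorean identity $\|u-E_hu\|_{a,\Omega}^2=\|u-R_hu\|_{a,\Omega}^2+\|R_hu-E_hu\|_{a,\Omega}^2$ reduces everything to showing $\|R_hu-E_hu\|_{a,\Omega}\lc\nu(h)\|u-R_hu\|_{a,\Omega}$.

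Stage~3 contains the genuine gap. Your Dunford identity
$E_h u - u = \tfrac{\mu}{2\pi i}\int_\Gamma (z-\mu)^{-1}(z-K_h)^{-1}(R_h u - u)\,dz$
is correct, but it is neutral: estimating it by uniform boundedness of $(z-K_h)^{-1}$ on $\Gamma$ gives only $\|u-E_hu\|_{a,\Omega}\lc\|u-R_hu\|_{a,\Omega}$ with an $O(1)$ constant, not the asserted $1+C\nu(h)$. You then say one should ``subtract $R_hu-u$'' and invoke $\|(K-K_h)w\|_{a,\Omega}\lc\nu(h)\|w\|_{a,\Omega}$; but $K-K_h$ does not appear in your integrand, so it is not explained how that bound is to be applied, and invoking it would require a further resolvent insertion that you have not carried out. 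The ingredient that actually supplies the $\nu(h)$ factor is a different duality estimate: after the subtraction one is reduced to terms of the form $(z-K_h)^{-1}K_h(R_hu-u)$ and $E_h(R_hu-u)$, both controlled by $\|K_h(R_hu-u)\|_{a,\Omega}$, and one then proves $\|K_h(R_hu-u)\|_{a,\Omega}\le\nu(h)\|R_hu-u\|_{a,\Omega}$ by an Aubin--Nitsche-type argument: set $\phi=K_h(R_hu-u)\in V_h$, use (\ref{def-kh}) and (\ref{def-k}) to write $\|\phi\|_{a,\Omega}^2=b(R_hu-u,\phi)=a(K\phi,R_hu-u)=a\big((I-R_h)K\phi,R_hu-u\big)\le\nu(h)\|\phi\|_{a,\Omega}\|R_hu-u\|_{a,\Omega}$. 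This estimate is the heart of the lemma (it is where the self-adjointness of $a$, the Galerkin orthogonality, and definition (\ref{def-nu}) all interact), and it is not ``book-keeping'' -- without it the proof does not close. As written, your Stage~3 identifies the right framework but omits the step that produces the small factor.
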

%

For two spaces $X$ and $Y$ of $H_0^1(\Omega)$, we denote
\begin{eqnarray}\label{def-D}
  d_{H_0^1(\Omega)}(X, Y) = \sup_{u \in X, \|u\|_{b} = 1}\inf_{v\in Y}\|u - v\|_{a, \Omega},
  \end{eqnarray}
  and the gap between $X$ and
$Y$ as follows:
  \begin{eqnarray}\label{def-gap}
  \delta_{H_0^1(\Omega)}(X, Y) = \max\{d_{H_0^1(\Omega)}(X, Y), d_{H_0^1(\Omega)}(Y, X)\}.
  \end{eqnarray}

  For $d_{H_0^1(\Omega)}(X, Y)$ defined above, we have \cite{babuska-osborn91}
  \begin{lemma}\label{thm-dxy-dyx}
  If $\dim X = \dim Y <\infty$, then $d_{H_0^1(\Omega)}(Y, X) \leq d_{H_0^1(\Omega)}(X, Y)[1 - d_{H_0^1(\Omega)}(X, Y)]^{-1}$.
  \end{lemma}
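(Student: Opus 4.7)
The plan is to adapt the standard perturbation-theoretic argument for gaps between subspaces: use the hypothesis $\dim X = \dim Y <\infty$ to invert the $a$-orthogonal projection from $X$ onto $Y$, and then transfer the one-sided estimate encoded in $d := d_{H_0^1(\Omega)}(X,Y)$ into an estimate in the reverse direction. Since the claim is vacuous when $d\ge 1$, I assume $d<1$ from the start.

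First I would define $P:X\to Y$ by letting $Pu$ be the $a$-orthogonal projection of $u$ onto the closed finite-dimensional subspace $Y$. By the very definition of $d$ together with homogeneity in $\|\cdot\|_{b,\Omega}$,
$$\|u-Pu\|_{a,\Omega}=\inf_{v\in Y}\|u-v\|_{a,\Omega}\le d\,\|u\|_{b,\Omega}\qquad\forall u\in X.$$
Using this inequality together with the control of $\|\cdot\|_{b,\Omega}$ by $\|\cdot\|_{a,\Omega}$ coming from Friedrichs/Poincar\'e, one checks that $P$ is injective: if $Pu=0$, then $u\perp_a Y$, so $\|u\|_{a,\Omega}=\mathrm{dist}_a(u,Y)\le d\,\|u\|_{b,\Omega}\lesssim d\,\|u\|_{a,\Omega}$, which forces $u=0$ when $d<1$. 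Injectivity combined with the dimension hypothesis $\dim X=\dim Y<\infty$ gives that $P$ is a bijection from $X$ onto $Y$.

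Now, given an arbitrary $w\in Y$ with $\|w\|_{b,\Omega}=1$, set $u:=P^{-1}w\in X$. Since $Pu=w$, the projection estimate yields
$$\|u-w\|_{a,\Omega}=\|u-Pu\|_{a,\Omega}\le d\,\|u\|_{b,\Omega},$$
and it only remains to bound $\|u\|_{b,\Omega}$ in terms of $\|w\|_{b,\Omega}=1$. The triangle inequality together with the embedding of $\|\cdot\|_{b,\Omega}$ into $\|\cdot\|_{a,\Omega}$ gives
$$\|u\|_{b,\Omega}\le\|w\|_{b,\Omega}+\|u-w\|_{b,\Omega}\le 1+\|u-w\|_{a,\Omega}\le 1+d\,\|u\|_{b,\Omega},$$
so $\|u\|_{b,\Omega}\le 1/(1-d)$ and therefore $\|u-w\|_{a,\Omega}\le d/(1-d)$. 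Taking the infimum over $u'\in X$ on the left and the supremum over $w\in Y$ with $\|w\|_{b,\Omega}=1$ in the definition of $d_{H_0^1(\Omega)}(Y,X)$ produces the claimed bound.

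The only really delicate point is the step comparing $\|\cdot\|_{b,\Omega}$ and $\|\cdot\|_{a,\Omega}$, used both for injectivity of $P$ and for the final triangle estimate; this is just the Friedrichs/Poincar\'e inequality and is the place where the fact that $d$ is a mixed-norm gap (as opposed to a single-norm Hilbert-space gap as in Babuska--Osborn) must be absorbed into the normalization. Everything else is a routine dimension-counting plus orthogonal-projection argument, and I do not expect further obstacles.
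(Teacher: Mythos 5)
Your approach is the standard Kato-type perturbation argument (invert the $a$-orthogonal projection $X\to Y$ using the dimension hypothesis, then bound the preimage of a $b$-unit vector of $Y$); this is also the route in the cited source \cite{babuska-osborn91}, and the paper records this lemma without proof, so there is no internal proof to compare against. The genuine problem sits exactly at the point you call ``delicate'' and then wave away: you use $\|v\|_{b,\Omega}\le \|v\|_{a,\Omega}$ with constant one, both to get injectivity of $P$ from $\|u\|_{a,\Omega}\le d\,\|u\|_{b,\Omega}$ and, more seriously, in
$$
\|u\|_{b,\Omega}\le\|w\|_{b,\Omega}+\|u-w\|_{b,\Omega}\le 1+\|u-w\|_{a,\Omega}\le 1+d\,\|u\|_{b,\Omega}.
$$
Friedrichs/Poincar\'e only gives $\|v\|_{b,\Omega}\le C_P\|v\|_{a,\Omega}$ with $C_P$ depending on $\Omega$, $A$, $c$, and nothing in the setup forces $C_P\le 1$. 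Carried through, your argument proves $d_{H_0^1(\Omega)}(Y,X)\le d/(1-C_P d)$ under $C_P d<1$, writing $d=d_{H_0^1(\Omega)}(X,Y)$, which is strictly weaker than the stated bound whenever $C_P>1$.

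This is not a bookkeeping defect that can be patched: the constant-free inequality is actually false for the mixed-norm gap in \eqref{def-D}. Take $\Omega=(0,2\pi)$, $a(u,v)=(u',v')$, $b(u,v)=(u,v)$ (so $C_P=2$), $X=\mathrm{span}\{\sin(x/2)\}$, $Y=\mathrm{span}\{\sin(x/2)+\epsilon\sin(Nx/2)\}$ with $N$ a large integer. Using the $a$- and $b$-orthogonality of the two Fourier modes one finds $d(X,Y)=\epsilon N/(2\sqrt{1+\epsilon^2 N^2})$ and $d(Y,X)=\epsilon N/(2\sqrt{1+\epsilon^2})$; letting $\epsilon\to 0$ with $\epsilon N\to 3/2$ gives $d(X,Y)\to 3/(2\sqrt{13})\approx 0.416<1$ but $d(Y,X)\to 0.75>d(X,Y)/(1-d(X,Y))\approx 0.712$. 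The exact $d/(1-d)$ bound holds for a single-norm gap as in Kato's formulation, not for the normalization $\|u\|_b=1$, $\|u-v\|_a$ used here. What your argument honestly delivers, and all that the paper actually uses in the proof of Theorem~\ref{thm-error-estimator-space}, is the weaker statement $d_{H_0^1(\Omega)}(Y,X)\lc d_{H_0^1(\Omega)}(X,Y)$ once $d_{H_0^1(\Omega)}(X,Y)$ is small enough; to obtain the lemma as literally stated you would need the unwarranted assumption $C_P\le 1$.
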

%

\section{Adaptive finite element method}\label{adaptive-algorithm}
\setcounter{equation}{0}


Here and hereafter we consider the approximation for some eigenvalue $\lambda$ of (\ref{eigen}) with multiplicity $q$ and its corresponding eigenfunction space  $ M(\lambda)$. Let $(\lambda_{h, l}, u_{h, l}) (l=1, \cdots, q)$ be
 the $q$ eigenpair of (\ref{fe-eigen}) which satisfy (\ref{eigen-f-err}), (\ref{eigen-f-err-2}), and \eqref{eigen-f-err-3}.


%

Note that (\ref{eigen}) and (\ref{fe-eigen}) can be rewritten as
\begin{eqnarray*}\label{operator-K-1}
  u=\lambda K u,~~~ u_{h}=\lambda_{h} R_h K u_{h},
\end{eqnarray*}
where $K$ and $R_h$ are the operators  defined by (\ref{def-k}) and (\ref{Gprojection}), respectively.

For any $u \in M(\lambda)$ with $\|u\|_b = 1$, since $E_h u \in M_h(\lambda)$, we have that  there exist some constants $\{\alpha_{h, l}(u)\}_{l=1}^{q}$
 such that $E_h u = \sum_{l=1}^{q} \alpha_{h, l}(u) u_{h, l}$. In further, Corollary \ref{coro2.1} implies that  $\sum_{l=1}^{q}  \alpha^2_{h, l}(u) \leq 1$ . Define $\lambda^h = \frac{a(E_h u, E_h u)}{b(E_h u, E_h u)}$, we have
 \begin{eqnarray}\label{eigenval-approx}
  \lambda^h  = \frac{1}{\sum_{l=1}^{q} \alpha_{h, l}^2(u)} \sum_{l=1}^{q} \alpha_{h, l}^2(u) \lambda_{h, l},
 \end{eqnarray}
which together with
Proposition \ref{prop3.2}  leads to
\begin{eqnarray}
|\lambda-\lambda^{h}|
 &\le&  \frac{\|u-E_h u\|_{a,\Omega}^2}{\|E_h u\|_{b, \Omega}^2} =  \frac{\|u-E_h u\|_{a,\Omega}^2}{ \sum_{l=1}^{q} \alpha_{h, l}^2(u)}.
 \end{eqnarray}
%

 Define $w^h =  \sum_{l=1}^{q} \alpha_{h,l}(u) \lambda_{h, l} K u_{h, l}$, and  we see that
\begin{eqnarray}\label{u-w}
 E_h u = R_h w^h.
\end{eqnarray}

\begin{theorem}\label{thm-eigen-boundary} Given $u \in M(\lambda)$ with $\|u\|_b = 1$, ant let $r(h)= \rho_{_\Omega}(h)+ \delta_h(\lambda).$ Then
\begin{eqnarray}\label{eigen-boundary-neq}
 \|u-E_h u\|_{a,\Omega}= \|w^h - R_h w^h\|_{a,\Omega} +\mathcal
 {O}(r(h))\|u-E_h u\|_{a,\Omega}.
\end{eqnarray}
\end{theorem}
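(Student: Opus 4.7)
The plan is to reduce to bounding $\|u - w^h\|_{a,\Omega}$ by $r(h)\,\|u - E_h u\|_{a,\Omega}$, after which the theorem follows at once: since identity~\eqref{u-w} gives $E_h u = R_h w^h$, the reverse triangle inequality yields
\begin{eqnarray*}
\big|\,\|u - E_h u\|_{a,\Omega} - \|w^h - R_h w^h\|_{a,\Omega}\,\big| \leq \|u - w^h\|_{a,\Omega}.
\end{eqnarray*}

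To estimate $\|u - w^h\|_{a,\Omega}$, I would combine the eigen-equation $u = \lambda K u$ with the representation $E_h u = \sum_{l=1}^q \alpha_{h,l}(u)\,u_{h,l}$, splitting
\begin{eqnarray*}
u - w^h \;=\; \lambda K u - \sum_{l=1}^q \alpha_{h,l}(u)\,\lambda_{h,l}\,K u_{h,l} \;=\; \lambda K(u - E_h u) + K g,
\end{eqnarray*}
with $g := \sum_{l=1}^q \alpha_{h,l}(u)(\lambda - \lambda_{h,l})\,u_{h,l}$. The standard bound $\|Kv\|_{a,\Omega} \lc \|v\|_{b,\Omega}$ (immediate from the defining relation~\eqref{def-k} and Poincar\'e), applied to the first term and combined with $\|u - E_h u\|_{b,\Omega} \lc \rho_{_\Omega}(h)\,\|u - E_h u\|_{a,\Omega}$ from Proposition~\ref{prop3.1}, handles the first piece. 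For the second, the $b$-orthonormality of the $u_{h,l}$ gives $\|Kg\|_{a,\Omega} \lc \|g\|_{b,\Omega} = \bigl(\sum_l \alpha_{h,l}(u)^2(\lambda - \lambda_{h,l})^2\bigr)^{1/2}$.

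The main obstacle is bounding this last quantity sharply: the naive estimate $|\lambda - \lambda_{h,l}| \lc \delta_h(\lambda)^2$ coming directly from~\eqref{eigen-f-err-3} only produces $O(\delta_h(\lambda)^2)$, which cannot be absorbed into $r(h)\,\|u-E_h u\|_{a,\Omega}$. The trick is to extract an extra factor of $\|u - E_h u\|_{a,\Omega}$ from the Rayleigh-quotient type identity
\begin{eqnarray*}
\|u - E_h u\|_{a,\Omega}^2 \;=\; \lambda\bigl(1 - \|E_h u\|_{b,\Omega}^2\bigr) + \sum_{l=1}^q \alpha_{h,l}(u)^2 (\lambda_{h,l} - \lambda),
\end{eqnarray*}
obtained by expanding $a(u-E_h u,\,u-E_h u)$ and using that $E_h$ is a $b$-orthogonal projection, so $a(u,E_h u) = \lambda\, b(u,E_h u) = \lambda\,\|E_h u\|_{b,\Omega}^2$. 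Corollary~\ref{coro2.1} makes both right-hand terms nonnegative, hence $\sum_l \alpha_{h,l}(u)^2(\lambda_{h,l}-\lambda) \leq \|u - E_h u\|_{a,\Omega}^2$; pulling one $(\lambda_{h,l}-\lambda)$ factor out of the quadratic sum and bounding it by $C\,\delta_h(\lambda)^2$ via~\eqref{eigen-f-err-3} then yields
\begin{eqnarray*}
\sum_l \alpha_{h,l}(u)^2(\lambda-\lambda_{h,l})^2 \;\lc\; \delta_h(\lambda)^2\,\|u-E_h u\|_{a,\Omega}^2.
\end{eqnarray*}
Combining the two pieces gives $\|u - w^h\|_{a,\Omega} \lc r(h)\,\|u - E_h u\|_{a,\Omega}$, and the opening triangle inequality closes the argument.
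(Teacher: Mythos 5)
Your proof is correct and follows essentially the same strategy as the paper: decompose $u - w^h = \lambda K(u - E_h u) + Kg$, estimate the first piece via Proposition~\ref{prop3.1}, and extract the crucial extra factor of $\|u - E_h u\|_{a,\Omega}$ from the eigenvalue-error term by Rayleigh-quotient reasoning. The only cosmetic difference is that you bound $\|Kg\|_{a,\Omega}$ through $\|g\|_{b,\Omega}$ and factor out $\max_l(\lambda_{h,l}-\lambda)$, whereas the paper bounds by $\sum_l|\alpha_{h,l}(\lambda-\lambda_{h,l})|$ and then applies H\"older together with identity~\eqref{eigenval-approx} and Proposition~\ref{prop3.2}; the underlying inequality $\sum_l\alpha_{h,l}^2(\lambda_{h,l}-\lambda)\le\|u-E_hu\|_{a,\Omega}^2$ is the same in both.
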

\begin{proof}
 We obtain from the definition of $w^h$ that
\begin{eqnarray*}
 u-w^h&=&\lambda K u - \sum_{l=1}^{q} \alpha_{h, l}(u) \lambda_{h, l}  K u_{h, l} \nonumber\\
 &=&\lambda K (u- E_h u) + \lambda K(\sum_{l=1}^{q} \alpha_{h, l}(u) u_{h, l})
 - \sum_{l=1}^{q} \alpha_{h, l}(u) \lambda_{h, l} K u_{h, l}\nonumber\\
 &=& \lambda K (u- E_h u) + \sum_{l=1}^{q} \alpha_{h, l}(u)(\lambda -  \lambda_{h, l}) K u_{h, l}. \nonumber\\
\end{eqnarray*}
Since
\begin{eqnarray*}
\lambda^h - \lambda  = \frac{1}{\sum_{l=1}^{q} \alpha_{h, l}^2(u)}\sum_{l=1}^{q} \alpha_{h, l}^2(u) (\lambda_{h, l} - \lambda),
\end{eqnarray*}
we have
\begin{eqnarray*}
\| \sum_{l=1}^{q} \alpha_{h, l}(u)(\lambda -  \lambda_{h, l}) K u_{h, l} \|_{a,\Omega} &\lc & \sum_{l=1}^{q} |\alpha_{h, l}(u)(\lambda -  \lambda_{h, l})| \nonumber\\
& \leq & \big(\sum_{l=1}^{q} \alpha^2_{h, l}(u)(\lambda_{h, l} -  \lambda)\big)^{1/2} \big(\sum_{l=1}^{q} (\lambda_{h, l} -  \lambda) \big)^{1/2}   \nonumber\\
& = &   \big(\sum_{l=1}^{q} \alpha^2_{h, l}(u)\big)^{1/2}   (\lambda^h - \lambda)^{1/2}\big(\sum_{l=1}^{q} (\lambda_{h, l} -  \lambda) \big)^{1/2}  ,  \nonumber\\
\end{eqnarray*}
where H\"{o}lder inequality and  (\ref{eigenval-approx}) are used in the second inequality and the last equation, respectively.
Then, from (\ref{eigen-f-err-3}), we get
\begin{eqnarray*}
\| \sum_{l=1}^{q} \alpha_{h, l}(u)(\lambda -  \lambda_{h, l}) K u_{h, l} \|_{a,\Omega}&\leq & \big(\sum_{l=1}^{q} (\lambda_{h, l} -  \lambda) \big)^{1/2}   \|u - E_h u\|_a \nonumber\\
&\lc &\delta_h(\lambda) \|u - E_h u\|_a,
\end{eqnarray*}
which together with  the fact $\|K(u-E_h u)\|_{a,\Omega}\lc\| u - E_h u\|_{b, \Omega}$ and (\ref{eigen-f-err})  leads to
\begin{eqnarray}\label{thm1-neq1}
 \| u - w^h\|_{a,\Omega}
 &\leq & \tilde{C} ( \rho_{\Omega}(h) + \delta_h(\lambda) ) \|u - E_h u\|_{a,\Omega},
 \end{eqnarray}
 with $\tilde{C}$ some constant not depending on $h$.
Note that (\ref{u-w}) implies
\begin{eqnarray*}
 u - E_h u = w^h-R_h w^h + u-w^h.
 \end{eqnarray*}
Hence we obtain (\ref{eigen-boundary-neq}) from (\ref{thm1-neq1}). This completes the proof.
 \end{proof}

%

\subsection{A posteriori error estimators}
Following the element residual $\tilde{\mathcal{R}}_T(u_{i,h})$ and the jump residual $\tilde{J}_E(u_{i,h})$ for  (\ref{dis-fem}), we now define an
element residual $\mathcal{R}_T(E_h u)$ and a jump residual $J_E(E_h u)$ for (\ref{fe-eigen})  as follows:
 \begin{eqnarray*}
  \mathcal{R}_T(E_h u)
  &=&\lambda^{h} E_h u  + \nabla\cdot(A\nabla E_h u)-c E_h u~~ \mbox{in}~ T\in
  \mathcal{T}_h,\label{residual-eigen-multi}\\
  J_E(E_h u) &=& -A \nabla (E_h u)^{+}\cdot \nu^{+} - A \nabla (E_h u)^{-}\cdot
  \nu^{-}\nonumber\\
   &=& [[A\nabla E_h u]]_E \cdot \nu_E  ~~~~ \mbox{on}~ E\in
  \mathcal{E}_h, \label{jump-eigen-multi}
\end{eqnarray*}
where $E$ , $\nu^+$ and $\nu^-$ are defined as those of section \ref{sc:linear boundary}.

 For $T\in \mathcal{T}_h$, we define the local error indicator
  $\eta_h(E_h u, T)$ by
  \begin{eqnarray*}\label{error-local-eigen-multi}
   \eta^2_h(E_h u,T) = h_T^2\|\mathcal{R}_T(E_h u)\|_{0,T}^2 + \sum_{E\in \mathcal{E}_h,E\subset\partial T
   } h_E \|J_E(E_h u)\|_{0,E}^2
  \end{eqnarray*}
and the oscillation $osc_h(E_h u,T)$ by
\begin{eqnarray*}\label{osc-local-eigen-multi}
osc^2_h(E_h u,T) = h_T^2\|\mathcal{R}_{T}(E_h u)-\overline{\mathcal{R}_{T}(E_h u)}\|_{0,T}^2
   + \sum_{E\in \mathcal{E}_h,E\subset\partial T
   } h_E \|J_E(E_h u)-\overline{J_E(E_h u)}\|_{0,E}^2.
\end{eqnarray*}

We define the error
 estimator
$\eta_h(E_h u, \Omega)$ and the oscillation $osc_h(E_h u,\Omega)$ by
 \begin{eqnarray*}\label{error-estimator-eigen-multi}
  \eta^2_h(E_h u, \Omega) = \sum_{T\in \mathcal{T}_h, T \subset \Omega}
  \eta^2_h(E_h u, T)\quad \textnormal{and} \quad
  osc^2_h(E_h u, \Omega) =  \sum_{T\in \mathcal{T}_h, T \subset \Omega}
  osc^2_h(E_h u, T).
  \end{eqnarray*}

For $U_h=(u_{h,1},\cdots,u_{h,q})\in (V_h)^q$,  we let
\begin{eqnarray*}
\eta^2_h(U_h, T)=\sum_{l=1}^q\eta^2_h(u_{h, l}, T) \quad \textnormal{and} \quad osc^2_h(U_h, T)=\sum_{l=1}^q osc^2_h(u_{h, l}, T), ~\forall T\in\mathcal{T}_h,
\end{eqnarray*}
and
\begin{eqnarray*}
\eta^2_h(U_h, \Omega)=\sum_{T\in\mathcal{T}_h} \eta^2_h(U_h, T) \quad \textnormal{and} \quad osc^2_h(U_h, \Omega)=\sum_{T\in\mathcal{T}_h} osc^2_h(U_h, T).
\end{eqnarray*}
For any $E_h U=(E_h u_1,\cdots,E_h u_q)\in (V_h)^q$, we set
\begin{eqnarray*}
\eta^2_h(E_h U, T)=\sum_{l=1}^q\eta^2_h(E_h u_{l}, T) \quad \textnormal{and} \quad osc^2_h(E_h U, T)=\sum_{l=1}^q osc^2_h(E_h u_l, T),
\end{eqnarray*}
and
\begin{eqnarray*}
\eta^2_h(E_h U, \Omega)=\sum_{T\in\mathcal{T}_h}\eta^2_h(E_h U, T) \quad \textnormal{and} \quad osc^2_h(E_h U, \Omega)=\sum_{T\in\mathcal{T}_h} osc^2_h(E_h U, T).
\end{eqnarray*}

We shall now present the following property of eigenspace approximation that will  play a crucial role in our analysis.
\begin{lemma}\label{eta-etah} Let $h \in (0, h_0)$ and $h_0 \ll 1$.
 Then, for any  orthonormal basis $\{u_l\}_{l=1}^{q}$ of $M(\lambda)$, there hold
\begin{eqnarray}\label{lemma-eta-etah-conc1}
  \eta_h^2(U_h, T) \lc \eta_h^2(E_h U, T) \lc \eta_h^2(U_h, T), ~\forall T\in \mathcal{T}_h,
\end{eqnarray}
and
\begin{eqnarray}\label{lemma-eta-etah-conc2}
  osc_h^2(U_h, T) \lc osc_h^2(E_h U, T) \lc osc_h^2(U_h, T), ~\forall T\in \mathcal{T}_h,
\end{eqnarray}
where $E_h U = (E_h u_1, \cdots, E_h u_q)$, $U_h = (u_{h,1}, \cdots, u_{h, q})$.
\end{lemma}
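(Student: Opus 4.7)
The plan is a change-of-basis argument: since $\{u_{h,j}\}_{j=1}^q$ and $\{E_h u_l\}_{l=1}^q$ both form bases of the same discrete eigenspace $M_h(\lambda)$, with a transition matrix that is close to orthogonal, the error indicators should agree up to a bounded factor. The principal technical point is that the element residual is not literally linear in its argument (because of the Rayleigh-quotient factor), but the nonlinearity contributes only a higher-order perturbation.

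First I would expand $E_h u_l = \sum_{j=1}^q \alpha_{lj} u_{h,j}$ with $\alpha_{lj} = b(E_h u_l, u_{h,j})$. Corollary \ref{coro2.2} yields $(\alpha\alpha^{\top})_{lm} = \delta_{lm} + \mathcal{O}(\rho_{_\Omega}(h)\delta_h(\lambda))$, so for $h_0$ sufficiently small and $h \in (0, h_0)$ the matrix $\alpha$ is invertible with $\|\alpha\|, \|\alpha^{-1}\| \le C$ uniformly, and hence also $\alpha^{\top}\alpha = I + \mathcal{O}(\rho_{_\Omega}(h)\delta_h(\lambda))$. Next, exploiting linearity of $\nabla\cdot(A\nabla\cdot) - c(\cdot)$, of $J_E(\cdot)$, and of the $L^2$-projection $\overline{(\cdot)}$, together with the discrete eigen-equation $\nabla\cdot(A\nabla u_{h,j}) - cu_{h,j} = \mathcal{R}_T(u_{h,j}) - \lambda_{h,j} u_{h,j}$, I would obtain
\begin{align*}
J_E(E_h u_l) &= \sum_{j=1}^q \alpha_{lj}\, J_E(u_{h,j}), \\
\mathcal{R}_T(E_h u_l) &= \sum_{j=1}^q \alpha_{lj}\,\mathcal{R}_T(u_{h,j}) + \sum_{j=1}^q \alpha_{lj}(\lambda^h_l - \lambda_{h,j})\, u_{h,j},
\end{align*}
where $\lambda^h_l = a(E_h u_l, E_h u_l)/b(E_h u_l, E_h u_l)$. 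Propositions \ref{prop3.1} and \ref{prop3.2} give $|\lambda^h_l - \lambda|, |\lambda_{h,j}-\lambda| \lc \delta_h(\lambda)^2$, so the last sum is a higher-order perturbation whose $L^2(T)$-norm is bounded by $C\delta_h(\lambda)^2 \sum_j \|u_{h,j}\|_{0,T}$.

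Combining boundedness of $\alpha$ with elementwise Cauchy-Schwarz gives
\begin{equation*}
\eta_h^2(E_h u_l, T) \le C \sum_{j=1}^q \eta_h^2(u_{h,j}, T) + C\,\delta_h(\lambda)^4\, h_T^2 \sum_{j=1}^q \|u_{h,j}\|_{0,T}^2,
\end{equation*}
and summing over $l$ yields $\eta_h^2(E_h U, T) \lc \eta_h^2(U_h, T)$ provided the second (perturbation) term is absorbable in the first uniformly for $h\in(0,h_0)$. The reverse inequality follows symmetrically by writing $u_{h,j} = \sum_l (\alpha^{-1})_{jl} E_h u_l$, using the uniform bound on $\alpha^{-1}$ and the dual perturbation with the roles of $\lambda^h_l$ and $\lambda_{h,j}$ swapped. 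The oscillation bound \eqref{lemma-eta-etah-conc2} follows by repeating the argument verbatim with $\mathcal{R}_T(\cdot)-\overline{\mathcal{R}_T(\cdot)}$ and $J_E(\cdot)-\overline{J_E(\cdot)}$ in place of $\mathcal{R}_T(\cdot)$ and $J_E(\cdot)$, since $\overline{(\cdot)}$ is linear.

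The main obstacle is absorbing the higher-order perturbation $C\delta_h(\lambda)^4\, h_T^2 \sum_j\|u_{h,j}\|_{0,T}^2$ into the main contribution $\sum_j \eta_h^2(u_{h,j}, T)$ uniformly in $h$ and elementwise. The saving grace is that $\delta_h(\lambda) \to 0$ by Proposition \ref{prop2.1}, that $b$-orthonormality of $\{u_{h,j}\}$ controls $\sum_j \|u_{h,j}\|_{0,T}^2$ globally, and that the leading part of $\mathcal{R}_T(u_{h,j})$ is $\lambda_{h,j}u_{h,j}$ with $\lambda_{h,j}$ bounded below by (say) $\lambda/2$; shrinking $h_0$ if necessary makes the perturbation a strict fraction of the main term on every element and delivers the claimed equivalence.
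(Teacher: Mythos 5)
You take essentially the same route as the paper — expand $E_h u_l$ in the discrete eigenbasis $\{u_{h,j}\}_{j=1}^q$, control the transition matrix via Corollaries \ref{coro2.1}--\ref{coro2.2}, and compare indicators term by term — but you are more careful about one point the paper skips. The paper applies the bound $\eta_h^2\bigl(\sum_j\beta_j u_{h,j},T\bigr)\le\bigl(\sum_j\beta_j^2\bigr)\bigl(\sum_j\eta_h^2(u_{h,j},T)\bigr)$ directly, as if $\mathcal{R}_T(\cdot)$ were linear in its argument; you correctly observe that $\mathcal{R}_T$ carries the argument-dependent Rayleigh-quotient coefficient $\lambda^h_l$, so the expansion produces the extra term $\sum_j\alpha_{lj}(\lambda^h_l-\lambda_{h,j})u_{h,j}$ that the paper's chain of inequalities silently drops. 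Your decomposition of $\mathcal{R}_T(E_h u_l)$, the bound $|\lambda^h_l-\lambda_{h,j}|\lc\delta_h(\lambda)^2$, and the observation that $\overline{(\cdot)}$ is linear so the oscillation case is the same, are all fine and are a more faithful reading of the definitions than what the paper writes.

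The gap is your final absorption step. You need, \emph{elementwise}, that $\delta_h(\lambda)^4\,h_T^2\sum_j\|u_{h,j}\|_{0,T}^2\lc\sum_j\eta_h^2(u_{h,j},T)$ uniformly over $T$, and you justify it by asserting that "the leading part of $\mathcal{R}_T(u_{h,j})$ is $\lambda_{h,j}u_{h,j}$", i.e.\ that $\|u_{h,j}\|_{0,T}\lc\|\mathcal{R}_T(u_{h,j})\|_{0,T}$ on every element. That is not true in general: $\mathcal{R}_T(u_{h,j})=\lambda_{h,j}u_{h,j}+\nabla\cdot(A\nabla u_{h,j})-cu_{h,j}$ can be small or vanish on an element where $\|u_{h,j}\|_{0,T}$ is of unit size, for instance when $c$ is comparable to $\lambda_{h,j}$, or when $\nabla\cdot(A\nabla u_{h,j})$ locally cancels the reaction term. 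Shrinking $h_0$ does not repair this — the offending ratio is a property of an individual element, not of the mesh scale — and the global control on $\sum_j\|u_{h,j}\|_{0,T}^2$ via $b$-orthonormality only helps after summing over $T$. To close the elementwise bound you would need a different mechanism, e.g.\ an inverse estimate tying $h_T\|u_{h,j}\|_{0,T}$ to the jump contribution of $\eta_h(u_{h,j},T)$, or a redefinition of the indicator with a fixed coefficient in place of the Rayleigh quotient so that $\mathcal{R}_T$ is genuinely linear and the perturbation disappears; as written, your absorption step does not go through. (The paper avoids this issue only by treating $\mathcal{R}_T$ as linear in the Cauchy--Schwarz step and never acknowledging the perturbation, so your version is the more honest accounting, but the gap it exposes is real.)
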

\begin{proof}
First, we prove (\ref{lemma-eta-etah-conc1}).
We denote $E_h u_l$ as $v_{h, l}$. On  one hand, Since $M_{h}(\lambda) = \mbox{span} \{u_{h,1}, \cdots, u_{h, q}\}$,  we have that
 there exists $q$ constants $\beta_{h,j}^{l}(j = 1, \cdots, q)$ such that
\begin{eqnarray*}
v_{h,l} = \sum_{j=1}^q \beta_{h,j}^{l} u_{h,j}, ~~l = 1, \cdots, q.
\end{eqnarray*}
We see from Corollary \ref{coro2.1}  that $\|v_{h, l}\|_b \leq 1$,  namely,
 \begin{eqnarray*}
 \sum_{j=1}^{q} \big(\beta_{h, j}^{l}\big)^2  \leq 1.
 \end{eqnarray*}
We may apply the above fact and  analyze as follows
\begin{eqnarray*}
\eta_h^2(E_h U, T)&=&  \sum_{l=1}^{q} \eta_h^2(\sum_{j=1}^q \beta_{h,j}^{l} u_{h,j}, T) \nonumber\\
&\leq&  \sum_{l=1}^{q} \sum_{j=1}^q \big(\beta_{h,j}^{l}\big)^2 \sum_{j=1}^{q} \eta_h^2(u_{h,j}, T) \nonumber\\
&\leq&  \sum_{l=1}^{q} \sum_{j=1}^q   \eta_h^2(u_{h,j}, T)  = q \sum_{j=1}^q   \eta_h^2(u_{h,j}, T).
\end{eqnarray*}
Consequently,
\begin{eqnarray}\label{lemma-eta-etah-ineq1}
\eta_h^2(E_h U, T)\leq  q  \eta_h^2(U_h, T).
\end{eqnarray}
Similarly, we can get
\begin{eqnarray}\label{lemma-eta-etah-ineq1-2}
osc_h^2(E_h U, T)\leq  q  osc_h^2(U_h, T).
\end{eqnarray}

On the other hand, since the operator $E_h: M(\lambda) \rightarrow M_h(\lambda)$ is one-to-one and onto~(\cite{babuska-osborn89, babuska-osborn91}), we have that  $\{v_{h,l}\}_{l=1}^{q}$ is  basis of $M_h(\lambda)$, namely,
$M_{h}(\lambda) = \mbox{span} \{u_{h,1}, \cdots, u_{h, q}\} =  \mbox{span} \{v_{h,1}, \cdots, v_{h, q}\}$.  So  there exist $q$ constants $\hat{\beta}_{H,j}^{l}(j = 1, \cdots,
q)$ 
such that
\begin{eqnarray*}
u_{h,l} = \sum_{j=1}^q \hat{\beta}_{h,j}^{l} v_{h,j}, ~~l=1, \cdots, q.
\end{eqnarray*}
Note that
\begin{eqnarray}\label{lemma-eta-etah-eq1}
1 = b(u_{h,l}, u_{h, l}) &=& b(\sum_{j=1}^q \hat{\beta}_{h,j}^{l} v_{h,j}, \sum_{j=1}^q \hat{\beta}_{h,j}^{l} v_{h,j}) \nonumber\\
& = & \sum_{j=1}^q (\hat{\beta}_{h,j}^{l})^2b(v_{h, j}, v_{h, j}) + \sum_{i \neq j, i, j=1}^q \hat{\beta}_{h,i}^{l} \hat{\beta}_{h,j}^{l} b(v_{h, i}, v_{h, j}).
\end{eqnarray}
 We obtain from Corollary \ref{coro2.2}  that
\begin{eqnarray*}
b(v_{h, i}, v_{h, j}) = \delta_{ij} + \mathcal{O}(\rho_{\Omega}(h)  \delta_h(\lambda)),
\end{eqnarray*}
which implies that there exists some constant C not
depending on $h$ such that
\begin{eqnarray}\label{lemma-eta-etah-ineq2}
 (1 - C \rho_{\Omega}(h)  \delta_h(\lambda)) \sum_{j=1}^q (\hat{\beta}_{h,j}^{l})^2 \leq \sum_{j=1}^q (\hat{\beta}_{h,j}^{l})^2b(v_{h, j}, v_{h, j})  \leq (1 + C \rho_{\Omega}(h)  \delta_h(\lambda)) \sum_{j=1}^q (\hat{\beta}_{h,j}^{l})^2,
\end{eqnarray}
and
\begin{eqnarray}\label{lemma-eta-etah-ineq3}
 |\sum_{i \neq j, i, j=1}^q \hat{\beta}_{h,i}^{l} \hat{\beta}_{h,j}^{l} b(v_{h, i}, v_{h, j})| &\leq & \sum_{i \neq j, i, j=1}^q |\hat{\beta}_{h,i}^{l} \hat{\beta}_{h,j}^{l} |\rho_{\Omega}(h)  \delta_h(\lambda) \nonumber\\
 &\leq & C \sum_{i=1}^q (\hat{\beta}_{h,i}^{l})^2 \rho_{\Omega}(h)  \delta_h(\lambda).
 \end{eqnarray}
 Combining (\ref{lemma-eta-etah-eq1}), (\ref{lemma-eta-etah-ineq2}), and (\ref{lemma-eta-etah-ineq3}), we get
 \begin{eqnarray*}
 \frac{1}{1 + C \rho_{\Omega}(h) \delta_h(\lambda) } \leq \sum_{i=1}^q (\hat{\beta}_{h,i}^{l})^2 \leq \frac{1}{1 - C \rho_{\Omega}(h)  \delta_h(\lambda)}.
 \end{eqnarray*}
Therefore,
\begin{eqnarray*}
  \eta_h^2(U_h, T) &=&  \sum_{l=1}^{q}   \eta_h^2(\sum_{j=1}^q \hat{\beta}_{h,j}^{l} v_{h,j}, T) \nonumber\\
&\leq&   \sum_{l=1}^{q} \Big( \sum_{j=1}^q \big(\hat{\beta}_{h,j}^{l}\big)^2\Big) \Big(  \sum_{j=1}^q   \eta_h^2(v_{h,j}, T)\Big) \nonumber\\
&\leq& \frac{1}{1 - C \rho_{\Omega}(h)  \delta_h(\lambda)} \sum_{l=1}^{q}  \sum_{j=1}^q     \eta_h^2(v_{h,j}, T)\nonumber\\
 &\leq& \frac{q}{1 - C \rho_{\Omega}(h)  \delta_h(\lambda)}  \sum_{j=1}^q    \eta_h^2(v_{h,j}, T),
\end{eqnarray*}
that is,
\begin{eqnarray}\label{lemma-eta-etah-ineq4}
 \eta_h^2(U_h, T)\leq  \frac{q}{1 - C \rho_{\Omega}(h) \delta_h(\lambda)}     \eta_h^2(E_h U, T).
\end{eqnarray}
By using the same arguments, we have
\begin{eqnarray}\label{lemma-eta-etah-ineq4-2}
 osc_h^2(U_h, T)\leq  \frac{q}{1 - C \rho_{\Omega}(h) \delta_h(\lambda)}     osc_h^2(E_h U, T).
\end{eqnarray}

Since $h \in (0, h_0)$ and $h_0 \ll 1$, combining (\ref{lemma-eta-etah-ineq1}) and (\ref{lemma-eta-etah-ineq4}),  we arrive at (\ref{lemma-eta-etah-conc1}), and (\ref{lemma-eta-etah-conc2}) can be obtained  from (\ref{lemma-eta-etah-ineq1-2}) and (\ref{lemma-eta-etah-ineq4-2}). This completes
 the proof.
\end{proof}

Given $h_0\in (0,1)$, define $$\tilde{r}(h_0)=\sup_{h\in (0,h_0)}r(h).$$
\begin{theorem}\label{thm-error-estimator}
There exist constants $C_1, C_2$ and $C_3$, which only depend on the shape regularity constant $\gamma^{\ast}$, coercivity constant $c_a$ and continuity
constant $C_a$ of the bilinear form, such that
 \begin{eqnarray}\label{upper-bound}
  \|u-E_h u\|_{a,\Omega} \leq C_1 \eta_h(E_h u, \Omega)
 \end{eqnarray}
 and
 \begin{eqnarray}\label{lower-bound}
~~~~~~C^2_2 \eta^2_h(E_h u, \Omega)-
  C^2_3 osc^2_h (E_h u, \Omega) \le
 \|u-E_h u\|_{a,\Omega}^2
 \end{eqnarray}
 provided $h_0 \ll 1$.
Consequently,
 \begin{eqnarray*}\label{upper-bound-eigenvalue}
  |\lambda - \lambda^{h}| \lc  \eta^2_h(E_h u, \Omega)
 \end{eqnarray*}
 and
 \begin{eqnarray*}\label{lower-bound-eigenvalue}
~~~~~~ \eta^2_h(E_h u, \Omega)-
   osc^2_h (E_h u, \Omega) \lc
  |\lambda - \lambda^{h}|.
 \end{eqnarray*}
\end{theorem}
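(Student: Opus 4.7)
The strategy is to reduce the eigenvalue a posteriori estimate to the known bounds \eqref{boundary-upper}--\eqref{boundary-lower} for the linear source problem, using $w^h=\sum_{l=1}^{q}\alpha_{h,l}(u)\lambda_{h,l}Ku_{h,l}$ as a bridge. By construction $w^h$ solves $Lw^h=f^h$ with $f^h=\sum_{l=1}^{q}\alpha_{h,l}(u)\lambda_{h,l}u_{h,l}$, and \eqref{u-w} identifies $R_h w^h=E_h u$ as its Galerkin approximation. Theorem \ref{thm-eigen-boundary} already states that $\|u-E_h u\|_{a,\Omega}$ and $\|w^h-R_h w^h\|_{a,\Omega}$ agree modulo the correction $\mathcal{O}(r(h))\|u-E_h u\|_{a,\Omega}$, so the remaining task is to compare the source-problem residuals built from $f^h$ with the eigenvalue residuals built from $\lambda^h E_h u$.

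I would first observe that the jump residuals coincide, $\tilde{J}_E(R_h w^h)=J_E(E_h u)=[[A\nabla E_h u]]_E\cdot\nu_E$, while the element residuals differ only by
\begin{equation*}
\tilde{\mathcal{R}}_T(R_h w^h)-\mathcal{R}_T(E_h u)=f^h-\lambda^h E_h u=\sum_{l=1}^{q}\alpha_{h,l}(u)\bigl(\lambda_{h,l}-\lambda^h\bigr)u_{h,l}.
\end{equation*}
Corollary \ref{coro2.1} gives $\sum_l\alpha_{h,l}(u)^2\le 1$, and \eqref{eigenval-approx} combined with Proposition \ref{prop3.2} and \eqref{eigen-f-err-3} yields $|\lambda_{h,l}-\lambda^h|\lc\delta_h(\lambda)^2$; since $\|u_{h,l}\|_{b,\Omega}=1$ and the $u_{h,l}$ are $L^2$-orthonormal, it follows that $\sum_T h_T^2\|\tilde{\mathcal{R}}_T-\mathcal{R}_T\|_{0,T}^2\lc h_0^2\delta_h(\lambda)^4$. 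A triangle inequality in the $\ell^2$-sum gives $\tilde{\eta}_h(R_h w^h,\Omega)=\eta_h(E_h u,\Omega)+\mathcal{O}(h_0\delta_h(\lambda)^2)$, and the analogous argument applied to $\tilde{\mathcal{R}}_T-\overline{\tilde{\mathcal{R}}_T}$ vs.\ $\mathcal{R}_T-\overline{\mathcal{R}_T}$ yields $\widetilde{osc}_h(R_h w^h,\Omega)=osc_h(E_h u,\Omega)+\mathcal{O}(h_0\delta_h(\lambda)^2)$.

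Combining Theorem \ref{thm-eigen-boundary}, \eqref{boundary-upper} applied to $(w^h,R_h w^h)$, and the estimator comparison above, I would obtain
\begin{equation*}
\|u-E_h u\|_{a,\Omega}\le \tilde{C}_1\,\eta_h(E_h u,\Omega)+\mathcal{O}\bigl(h_0\delta_h(\lambda)^2\bigr)+\mathcal{O}\bigl(r(h)\bigr)\|u-E_h u\|_{a,\Omega}.
\end{equation*}
Using Lemma \ref{Eh-Rh} and the definition of $\delta_h(\lambda)$ to bound the non-proportional correction by $\|u-E_h u\|_{a,\Omega}$ up to a factor that vanishes with $h_0$, and absorbing the proportional term via $r(h)\to 0$, both corrections can be controlled for $h_0\ll 1$, producing \eqref{upper-bound}. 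The lower bound \eqref{lower-bound} follows by the symmetric argument: starting from \eqref{boundary-lower} applied to $(w^h,R_h w^h)$, replacing $(\tilde{\eta}_h,\widetilde{osc}_h)$ by $(\eta_h,osc_h)$ with the same corrections, and invoking Theorem \ref{thm-eigen-boundary} to transfer from $\|w^h-R_h w^h\|_{a,\Omega}^2$ back to $\|u-E_h u\|_{a,\Omega}^2$.

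Finally, the eigenvalue consequences follow from Proposition \ref{prop3.2} applied to $w=E_h u$, which yields $\lambda^h-\lambda=\bigl(\|u-E_h u\|_{a,\Omega}^2-\lambda\|u-E_h u\|_{b,\Omega}^2\bigr)/b(E_h u,E_h u)$. Since Proposition \ref{prop3.1} gives $\|u-E_h u\|_{b,\Omega}\lc\rho_\Omega(h)\|u-E_h u\|_{a,\Omega}$ with $\rho_\Omega(h)\to 0$, and Corollary \ref{coro2.1} keeps $b(E_h u,E_h u)$ bounded above and below, we obtain $|\lambda-\lambda^h|\cong\|u-E_h u\|_{a,\Omega}^2$ for $h_0\ll 1$, and the eigenvalue bounds follow directly from \eqref{upper-bound} and \eqref{lower-bound}. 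The main obstacle is the careful bookkeeping of the higher-order corrections $\mathcal{O}(r(h))$ and $\mathcal{O}(h_0\delta_h(\lambda)^2)$, ensuring that the final constants $C_1,C_2,C_3$ depend only on shape regularity and on the coercivity/continuity constants of the bilinear form, independently of $u$ and $h$.
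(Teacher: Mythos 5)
Your overall strategy matches the paper's: use $w^h$ as a bridge, apply the source-problem bounds \eqref{boundary-upper}--\eqref{boundary-lower} to $(w^h,R_h w^h)$, and transfer via Theorem \ref{thm-eigen-boundary} and the identity $E_h u = R_h w^h$. But you are in fact more careful than the paper. The paper's proof applies \eqref{boundary-upper} and immediately writes the result as $\|w^h - R_h w^h\|_{a,\Omega} \le \tilde{C}_1\eta_h(E_h u,\Omega)$, i.e., it silently identifies the source-problem estimator $\tilde{\eta}_h(R_h w^h,\Omega)$ with the eigenvalue estimator $\eta_h(E_h u,\Omega)$ (see also the explicit statement ``$\tilde{\eta}_h(R_h w^h,\Omega)=\eta_h(E_h u,\Omega)$'' at the end of the proof of Lemma \ref{lemma-bound-eigen}). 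You correctly observe that this is an exact identity only when $q=1$: the jump contributions coincide because $R_h w^h = E_h u$, but the element residuals differ by $f^h-\lambda^h E_h u=\sum_{l}\alpha_{h,l}(u)(\lambda_{h,l}-\lambda^h)u_{h,l}$, which is generically nonzero for a multiple eigenvalue since the Rayleigh quotient $\lambda^h$ of $E_h u$ is a weighted average of the $\lambda_{h,l}$ and differs from each individual $\lambda_{h,l}$. Your decomposition and the bound via $L^2$-orthonormality and $|\lambda_{h,l}-\lambda^h|\lc\delta_h(\lambda)^2$ thus address a real gap in the paper's proof.

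Two small points are worth flagging. First, your bound on the extra term can be sharpened to a proportional correction, which makes the absorption automatic rather than delicate: since $\sum_l\alpha_{h,l}^2(\lambda_{h,l}-\lambda)=(\sum_l\alpha_{h,l}^2)(\lambda^h-\lambda)\lc\|u-E_hu\|_{a,\Omega}^2$ by Proposition \ref{prop3.2}, one gets $\sum_l\alpha_{h,l}^2(\lambda_{h,l}-\lambda)^2\le\max_l(\lambda_{h,l}-\lambda)\cdot\sum_l\alpha_{h,l}^2(\lambda_{h,l}-\lambda)\lc\delta_h(\lambda)^2\|u-E_hu\|_{a,\Omega}^2$, and similarly $(\lambda^h-\lambda)^2\lc\delta_h(\lambda)^2\|u-E_hu\|_{a,\Omega}^2$; hence $\|f^h-\lambda^h E_hu\|_{0,\Omega}\lc\delta_h(\lambda)\|u-E_hu\|_{a,\Omega}$ and the estimator discrepancy is $\mathcal{O}(h_0\delta_h(\lambda))\,\|u-E_hu\|_{a,\Omega}$, a term of exactly the same shape as the $\mathcal{O}(r(h))$ correction already present and absorbable by the same argument. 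Second, as written your absorption step ``bound the non-proportional correction $\mathcal{O}(h_0\delta_h(\lambda)^2)$ by $\|u-E_hu\|_{a,\Omega}$ up to a vanishing factor'' requires $\delta_h(\lambda)\lc\|u-E_hu\|_{a,\Omega}$ for the specific $u$ at hand, which is not a consequence of Lemma \ref{Eh-Rh} alone; the sharper proportional bound above sidesteps this entirely and is the cleaner way to close the argument.
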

\begin{proof} Recall that $L w^h = \sum_{l=1}^{q} \alpha_{h,l}(u) \lambda_{h, l} u_{h, l}$.
We obtain from (\ref{boundary-upper}) and
 (\ref{boundary-lower}) that
\begin{eqnarray}\label{auxiliary-boundary-problem-upper}
  \|w^h - R_h w^h \|_{a,\Omega} \leq \tilde{C}_1 \eta_h (E_h u, \Omega)
\end{eqnarray}
and
\begin{eqnarray}\label{auxiliary-boundary-problem-lower-1}
&&\tilde{C}^2_2 \eta^2_h (E_h u, \Omega)- \tilde{C}^2_3 osc^2_h (E_h u, \Omega)
 \le \|w^h-R_h
 w^h\|_{a,\Omega}^2.
\end{eqnarray}
Combining (\ref{u-w}), (\ref{eigen-boundary-neq}), (\ref{auxiliary-boundary-problem-upper}) with (\ref{auxiliary-boundary-problem-lower-1}), we complete the proof. In
particular, we may choose the constants $C_1$, $C_2$ and $C_3$ satisfying
\begin{eqnarray}\label{coef-eigen-bound}
  C_1 = \tilde{C}_1 (1+ \tilde{C} \tilde{r}(h_0)), ~~~ C_2 = \tilde{C}_2 (1 - \tilde{C}
  \tilde{r}(h_0)), ~~~ C_3 = \tilde{C}_3(1 - \tilde{C} \tilde{r}(h_0)).
\end{eqnarray}
\end{proof}

%

From Theorem \ref{thm-error-estimator}, we can get the following a posteriori estimates for the gap $\delta_{H_0^1(\Omega)}(M(\lambda), M_h(\lambda))$ as follows.
\begin{theorem}\label{thm-error-estimator-space}
Let  $\lambda \in \mathbb{R}$  be some eigenvalue of (\ref{eigen}) with multiplicity $q$ and the corresponding eigenspace being $M(\lambda) = span\{u_1, \cdots, u_q\}$, $M_h(\lambda) = span\{ u_{h, 1}, \cdots, u_{h, q}\}$ be its   finite element approximation. Suppose $h \in (0, h_0)$ and $h_0 \ll 1$, then there
exist constants $C_1, C_2$ and $C_3$, which only depend on the shape regularity constant $\gamma^{\ast}$, coercivity constant $c_a$ and continuity
constant $C_a$ of the bilinear form, such that
 \begin{eqnarray}\label{upper-bound-space}
   \delta_{H_0^1(\Omega)}(M(\lambda), M_h(\lambda))   \lc \eta_h(U_h, \Omega)
 \end{eqnarray}
 and
 \begin{eqnarray}\label{lower-bound-space}
 \eta^2_h(U_h, \Omega)-
      osc^2_h (U_h, \Omega) \lc
  \delta_{H_0^1(\Omega)}^2(M(\lambda), M_h(\lambda)),
 \end{eqnarray}
 where $U_h = (u_{h, 1}, \cdots, u_{h, q})$.
\end{theorem}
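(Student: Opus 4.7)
The plan is to prove the two inequalities separately by applying Theorem~\ref{thm-error-estimator} to every element of an $L^2$-orthonormal basis $\{u_l\}_{l=1}^q$ of $M(\lambda)$, summing the per-eigenfunction bounds, and then using Lemma~\ref{eta-etah} to translate between the spectral projection $E_h U = (E_h u_1, \ldots, E_h u_q)$ and the discrete eigenbasis $U_h = (u_{h,1}, \ldots, u_{h,q})$.

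For the upper bound \eqref{upper-bound-space}, I would first fix $u \in M(\lambda)$ with $\|u\|_{b,\Omega} = 1$. Writing $u = \sum_l c_l u_l$ with $\sum_l c_l^2 = 1$, the linearity of $E_h$ yields $E_h u = \sum_l c_l E_h u_l \in M_h(\lambda)$, so by Cauchy--Schwarz
\begin{eqnarray*}
  \|u - E_h u\|_{a,\Omega}^2 \le q \sum_{l=1}^q \|u_l - E_h u_l\|_{a,\Omega}^2.
\end{eqnarray*}
The upper bound of Theorem~\ref{thm-error-estimator} gives $\|u_l - E_h u_l\|_{a,\Omega}^2 \le C_1^2 \eta_h^2(E_h u_l, \Omega)$, so summing produces a bound by $qC_1^2 \eta_h^2(E_h U, \Omega)$, and Lemma~\ref{eta-etah} converts this to a constant multiple of $\eta_h^2(U_h, \Omega)$. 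Taking the supremum over all normalized $u \in M(\lambda)$ then gives $d_{H_0^1(\Omega)}(M(\lambda), M_h(\lambda)) \lc \eta_h(U_h, \Omega)$, and since $\dim M(\lambda) = \dim M_h(\lambda) = q$ with $\eta_h(U_h, \Omega)$ small for $h \ll 1$, Lemma~\ref{thm-dxy-dyx} upgrades this to the full gap bound $\delta_{H_0^1(\Omega)}(M(\lambda), M_h(\lambda)) \lc \eta_h(U_h, \Omega)$.

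For the lower bound \eqref{lower-bound-space}, I would apply the lower bound of Theorem~\ref{thm-error-estimator} to each $u_l$ and sum to obtain
\begin{eqnarray*}
  C_2^2 \eta_h^2(E_h U, \Omega) - C_3^2 osc_h^2(E_h U, \Omega) \le \sum_{l=1}^q \|u_l - E_h u_l\|_{a,\Omega}^2.
\end{eqnarray*}
The crucial step is to bound the right-hand side by $\delta_{H_0^1(\Omega)}^2(M(\lambda), M_h(\lambda))$. Using Lemma~\ref{Eh-Rh} to pass from $E_h u_l$ to $R_h u_l$, then the Galerkin best-approximation property of $R_h$ in $V_h$ together with the inclusion $M_h(\lambda) \subset V_h$, for each normalized $u_l$ I get
\begin{eqnarray*}
  \|u_l - E_h u_l\|_{a,\Omega} \lc \|u_l - R_h u_l\|_{a,\Omega} \le \inf_{v \in M_h(\lambda)} \|u_l - v\|_{a,\Omega} \le \delta_{H_0^1(\Omega)}(M(\lambda), M_h(\lambda)).
\end{eqnarray*}
Summing over $l$ gives $\sum_l \|u_l - E_h u_l\|_{a,\Omega}^2 \lc \delta_{H_0^1(\Omega)}^2(M(\lambda), M_h(\lambda))$. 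Finally, Lemma~\ref{eta-etah} in the forms $\eta_h^2(U_h, \Omega) \lc \eta_h^2(E_h U, \Omega)$ and $osc_h^2(E_h U, \Omega) \lc osc_h^2(U_h, \Omega)$ replaces $E_h U$ by $U_h$ on the left-hand side, with the extra multiplicative constants absorbed by the $\lc$ symbol to yield \eqref{lower-bound-space}.

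The main obstacle I anticipate is establishing $\|u_l - E_h u_l\|_{a,\Omega} \lc \delta_{H_0^1(\Omega)}(M(\lambda), M_h(\lambda))$, since $E_h u_l$ is not a priori the $a$-orthogonal best approximation of $u_l$ in $M_h(\lambda)$; the fix is to route through $R_h$ via Lemma~\ref{Eh-Rh} and then exploit the inclusion $M_h(\lambda) \subset V_h$ to compare with the best approximation in the smaller subspace. Everything else reduces to orthonormal-basis bookkeeping, Cauchy--Schwarz, and the equivalence furnished by Lemma~\ref{eta-etah}.
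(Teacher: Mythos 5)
Your proof is correct and follows essentially the same route as the paper's: both apply Theorem~\ref{thm-error-estimator} termwise to an orthonormal basis of $M(\lambda)$, sum, convert $E_h U$ to $U_h$ via Lemma~\ref{eta-etah}, and invoke Lemma~\ref{thm-dxy-dyx} to pass from the one-sided distance $d_{H_0^1(\Omega)}$ to the gap $\delta_{H_0^1(\Omega)}$. Your handling of the step relating $\inf_{v\in M_h(\lambda)}\|u-v\|_{a,\Omega}$ to $\|u-R_h u\|_{a,\Omega}$ is in fact slightly more careful than the paper's display~(\ref{dist-eq}), which asserts an outright equality there, whereas only the two-sided comparability you derive (from Lemma~\ref{Eh-Rh}, $E_h u\in M_h(\lambda)$, and $M_h(\lambda)\subset V_h$) actually holds.
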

\begin{proof}
Assume $\{u_1, \cdots, u_q\}$ be any orthonormal basis of $M(\lambda)$. Set  $U = (u_1, \cdots, u_q)$.
From Lemma \ref{Eh-Rh} and Theorem \ref{thm-error-estimator}, we have that for any $u \in M(\lambda)$ with $\|u\|_b = 1$, there hold
\begin{eqnarray}\label{upper-bound-space}
  \|u-R_h u\|_{a,\Omega} \lc  \eta_h(E_h u, \Omega)
 \end{eqnarray}
 and
 \begin{eqnarray}\label{lower-bound-space}
  \eta^2_h(E_h u, \Omega)-
    osc^2_h (E_h u, \Omega) \lc
 \|u-R_h u\|_{a,\Omega}^2.
 \end{eqnarray}
  Therefore
 \begin{eqnarray*}
   \|U-R_h U\|_{a,\Omega} \lc \eta_h(E_h U, \Omega)
 \end{eqnarray*}
 and
 \begin{eqnarray*}
 \eta^2_h(E_h U, \Omega)-
    osc^2_h (E_h U, \Omega) \lc
 \|U-R_h U\|_{a,\Omega}^2.
 \end{eqnarray*}
 From Lemma \ref{eta-etah}, we have
 \begin{eqnarray}\label{upper-bound-Rh}
   \|U-R_h U\|_{a,\Omega} \lc \eta_h(U_h, \Omega)
 \end{eqnarray}
 and
 \begin{eqnarray}\label{lower-bound-Rh}
  \eta^2_h(U_h, \Omega)-
  C_3 osc^2_h (U_h, \Omega) \lc
 \|U-R_h U\|_{a,\Omega}^2.
 \end{eqnarray}

 Since $\{u_1, \cdots, u_q\}$ is any orthonormal basis of $M(\lambda)$, therefore,
   \begin{eqnarray}\label{dist-eq}
 \sup_{u\in M(\lambda), \|u\|_b =1}\inf_{v \in M_h(\lambda)} \| u - v\|_{a, \Omega} &=& \sup_{u\in M(\lambda), \|u\|_b =1} \|u-R_h u\|_{a,\Omega} \nonumber\\
 &\cong& \max_{l=1, \cdots, q} \|u_l - R_h u_l\|_{a, \Omega} \cong \|U - R_h U\|_{a, \Omega}.
 \end{eqnarray}
 Combining  (\ref{upper-bound-Rh}),  (\ref{lower-bound-Rh}), and (\ref{dist-eq}), we get
 \begin{eqnarray*}
   \sup_{u\in M(\lambda), \|u\|_b =1}\inf_{v \in M_h(\lambda)} \| u - v\|_{a, \Omega} \lc \eta_h(U_h, \Omega)
 \end{eqnarray*}
 and
 \begin{eqnarray*}
  \eta^2_h(U_h, \Omega)-
    osc^2_h (U_h, \Omega) \lc \sup_{u\in M(\lambda), \|u\|_b =1}\inf_{v \in M_h(\lambda)} \| u - v\|_{a, \Omega}.
 \end{eqnarray*}
 namely,
 \begin{eqnarray}\label{upper-bound-dxy}
   d_{H_0^1(\Omega)}(M(\lambda), M_h(\lambda)   \lc \eta_h(U_h, \Omega)
 \end{eqnarray}
 and
 \begin{eqnarray}\label{lower-bound-dxy}
     \eta^2_h(U_h, \Omega)-
   osc^2_h (U_h, \Omega) \lc  d_{H_0^1(\Omega)}(M(\lambda), M_h(\lambda).
 \end{eqnarray}

 Since $\dim M(\lambda) = \dim M_h(\lambda) = q$,  we obtain from Lemma \ref{thm-dxy-dyx} that
  \begin{eqnarray}\label{dxy-dyx-neq}
 d_{H_0^1(\Omega)}(M(\lambda), M_h(\lambda)  \lc d_{H_0^1(\Omega)}(M_h(\lambda), M(\lambda)  \lc d_{H_0^1(\Omega)}(M(\lambda), M_h(\lambda).
 \end{eqnarray}
 Therefore, combining (\ref{upper-bound-dxy}), (\ref{lower-bound-dxy}), and (\ref{dxy-dyx-neq}), we have
 \begin{eqnarray}\label{upper-bound-dyx}
   d_{H_0^1(\Omega)}(M_h(\lambda), M(\lambda) \lc \eta_h(U_h, \Omega)
 \end{eqnarray}
 and
 \begin{eqnarray}\label{lower-bound-dyx}
   \eta^2_h(U_h, \Omega)-
    osc^2_h (U_h, \Omega) \lc   d_{H_0^1(\Omega)}(M_h(\lambda), M(\lambda).
 \end{eqnarray}

 The definition of $\delta_{H_0^1(\Omega)}(M(\lambda), M_h(\lambda)$, together with (\ref{upper-bound-dxy}), (\ref{lower-bound-dxy}), (\ref{upper-bound-dyx}), and (\ref{lower-bound-dyx}) means
\begin{eqnarray*}
   \delta_{H_0^1(\Omega)}(M(\lambda), M_h(\lambda) \lc \eta_h(U_h, \Omega)
 \end{eqnarray*}
 and
 \begin{eqnarray*}
    \eta^2_h(U_h, \Omega)-
    osc^2_h (U_h, \Omega) \lc \delta_{H_0^1(\Omega)}(M(\lambda), M_h(\lambda).
 \end{eqnarray*}
 This completes the proof.
\end{proof}
\subsection{Adaptive algorithm}
Recall that the adaptive procedure consists of loops of the form
\begin{center}
\begin{tabular}{|p{80mm}|}\hline
\vspace{2mm}
 $
~~~~~ \mbox{\bf Solve}\rightarrow
 \mbox{\bf Estimate}\rightarrow\mbox{\bf Mark}\rightarrow\mbox{\bf Refine}
$ \vspace{2mm}\\
 \hline
\end{tabular}
\end{center}
 We assume that the solutions of the finite dimensional problems
can be solved to any accuracy efficiently.\footnote{ In fact, we have ignored two important practical issues: the inexact solution of the resulting algebraic
system and the numerical integration.
We remark the discussion about  the inexact solution in Section \ref{concluding-remark}.} The a posteriori error estimators are an essential part of the {\bf
 Estimate} step. In the following discussion, we use  $\eta_h(U_{h}, \Omega)$
defined above as the a posteriori error estimator.

\vskip 0.1cm
\begin{algorithm}\label{algorithm-AFEM-eigen}~
 Choose a parameter $0 < \theta <1.$
\begin{enumerate}
\item Pick a given mesh $\mathcal{T}_{h_0}$, and let $k=0$.
\item Solve the system (\ref{fe-eigen}) on $\mathcal{T}_{h_k}$ to get the discrete solution  $(\lambda_{h_k, l}, u_{h_k, l})(l=1, \cdots, q)$.
\item Compute local error indictors $\eta_{h_k}(u_{h_{k,l}}, T)(l=1, \cdots, q)$. for all $T \in \mathcal{T}_{h_k}$.
\item Construct $\mathcal{M}_{h_k} \subset \mathcal{T}_{h_k}$ by {\bf D\"{o}rfler marking strategy} and parameter
 $\theta$.
\item Refine $\mathcal{T}_{h_k}$ to get a new conforming mesh $\mathcal{T}_{h_{k+1}}$  by Procedure {\bf REFINE}.
\item Let $k=k+1$ and go to 2.
\end{enumerate}
\end{algorithm}
\vskip 0.1cm

The {\bf D\"{o}rfler marking strategy} in the algorithm above is similar to those for the boundary value problems, only with $\tilde{\eta}$ being replaced by $\eta$, which are  stated as follows.

\begin{center}
\begin{tabular}{|p{120mm}|}\hline
\begin{center}
{\bf D\"{o}rfler marking strategy}
 \end{center}\\
 Given a parameter $0<\theta < 1$.
\begin{enumerate}
 \item Construct a   subset $\mathcal{M}_{h_k}$ of
 $\mathcal{T}_{h_k}$ by selecting some elements
 in $\mathcal{T}_{h_k}$ such that
\begin{eqnarray}\label{dorfler-prop}
 \sum_{T\in \mathcal{M}_{h_k}} \eta_{h_k}^2(U_{h_k}, T)  \geq \theta
  \eta_{h_k}^2(U_{h_k},\Omega).
 \end{eqnarray}
 \item Mark all the elements in $\mathcal{M}_{h_k}$.
 \end{enumerate}
\\
 \hline
\end{tabular}
\end{center}

%
%

We shall now present the following property of eigenspace approximation that will  play a crucial role in our analysis.
\begin{lemma}\label{mark-eta-etah} Let $H \in (0, h_0)$ and $h_0 \ll 1$.
Given constant $\theta\in(0,1)$. If
\begin{eqnarray}\label{mark-eta-etah-cond}
\sum_{T\in \mathcal{M}_H} \eta_H^2(U_H, T)  \geq \theta \eta_H^2(U_H, \Omega),
\end{eqnarray}
then there exists some constant $\theta' \in (0, 1)$, such that for any orthonormal basis $\{u_l\}_{l=1}^{q}$ of $M(\lambda)$, there holds
\begin{eqnarray}\label{mark-eta-etah-conc}
\sum_{T\in \mathcal{M}_H} \eta_H^2(E_H U, T) \geq \theta' \eta_H^2(E_H U, \Omega),
\end{eqnarray}
where $E_H U=(E_H u_1,\cdots,E_H u_q)$.
\end{lemma}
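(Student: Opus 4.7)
The plan is to apply the local equivalences between $\eta_H(U_H,T)$ and $\eta_H(E_H U,T)$ provided by Lemma \ref{eta-etah} to both sides of the hypothesis \eqref{mark-eta-etah-cond}. Concretely, from Lemma \ref{eta-etah} (valid because $H \in (0,h_0)$ with $h_0 \ll 1$), there exist positive constants $c_1, c_2$, depending only on $q$ and on $\tilde{r}(h_0)$, such that
\begin{eqnarray*}
c_1\,\eta_H^2(U_H,T) \le \eta_H^2(E_H U,T) \le c_2\,\eta_H^2(U_H,T) \qquad \forall T\in \mathcal{T}_H.
\end{eqnarray*}

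Summing the lower bound over $T\in\mathcal{M}_H$ and then using the hypothesis \eqref{mark-eta-etah-cond}, I get
\begin{eqnarray*}
\sum_{T\in \mathcal{M}_H} \eta_H^2(E_H U,T) \ge c_1 \sum_{T\in \mathcal{M}_H} \eta_H^2(U_H,T) \ge c_1\,\theta\,\eta_H^2(U_H,\Omega).
\end{eqnarray*}
Summing the upper bound over all $T\in\mathcal{T}_H$ controls the global estimator of $E_H U$ by that of $U_H$:
\begin{eqnarray*}
\eta_H^2(E_H U,\Omega) \le c_2\,\eta_H^2(U_H,\Omega), \qquad \text{i.e.}\qquad \eta_H^2(U_H,\Omega) \ge \tfrac{1}{c_2}\,\eta_H^2(E_H U,\Omega).
\end{eqnarray*}
Chaining the two inequalities yields \eqref{mark-eta-etah-conc} with the explicit constant $\theta' = \tfrac{c_1 \theta}{c_2} \in (0,1)$.

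The argument is short because Lemma \ref{eta-etah} has already done the real work of localizing the comparison between the basis $\{u_{h,l}\}$ of $M_H(\lambda)$ and the image basis $\{E_H u_l\}$. The only subtle point is to ensure that $c_1$ and $c_2$ are genuinely independent of the mesh (so that $\theta'$ is a uniform constant across the adaptive loop): this follows from the proof of Lemma \ref{eta-etah}, where the constants involve $q$ and a factor of the form $(1 - C\rho_\Omega(H)\delta_H(\lambda))^{-1}$ which is uniformly bounded provided $H\in(0,h_0)$ with $h_0\ll 1$, an assumption we already made. So there is no real obstacle; the lemma is a direct corollary of the local estimator equivalence.
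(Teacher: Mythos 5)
Your proof is correct and follows essentially the same route as the paper: apply the local two-sided estimator equivalence from Lemma \ref{eta-etah} (whose constants are uniform for $H\in(0,h_0)$ with $h_0\ll 1$) once to the marked sum and once to the global sum, then chain; with the paper's explicit values $c_1=(1-\sigma)/q$ and $c_2=q$ your $\theta'=c_1\theta/c_2$ reduces exactly to the paper's $\theta'=(1-\sigma)\theta/q^2$.
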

\begin{proof}
From the proof of Lemma \ref{eta-etah}, for all $T \in \mathcal{T}_H$ we have (see also (\ref{lemma-eta-etah-ineq1}) and (\ref{lemma-eta-etah-ineq4})).
\begin{eqnarray}
\eta_H^2(E_H U, T)\leq  q  \eta_H^2(U_H, T),
\end{eqnarray}
and
\begin{eqnarray}
 \eta_H^2(U_H, T)\leq  \frac{q}{1 - C \rho_{\Omega}(H) \delta_H(\lambda)}     \eta_H^2(E_H U, T),
\end{eqnarray}
which are nothing but (\ref{lemma-eta-etah-ineq1}) and (\ref{lemma-eta-etah-ineq4}) with $h$ being replaced by $H$,
$C$ is some constant independent of $H$.
Therefore
\begin{eqnarray}\label{mark-eta-etah-ineq}
 \frac{q}{1 - C \rho_{\Omega}(H) \delta_H(\lambda)}    \sum_{T\in \mathcal{M}_H} \eta_H^2(E_H U, T)
  \geq \frac{\theta}{q}\eta_H^2(E_H U, \Omega).
\end{eqnarray}
Since $H \in (0, h_0)$ and $h_0 \ll 1$, we have that there exists some $0<\sigma \ll 1$, such that $ C \rho_{\Omega}(H) \delta_H(\lambda) \leq \sigma$. Hence, (\ref{mark-eta-etah-ineq}) is nothing but  (\ref{mark-eta-etah-conc}) with $\theta' = (1 - \sigma) \frac{\theta}{ q^2 } $.  Here and hereafter, we choose $\sigma = 0.01$, for instance, then $\theta' = 0.99  \frac{\theta }{q^2 }$.
\end{proof}

Similarly, we  have
\begin{lemma}\label{mark-etah-eta} Let $H \in (0, h_0)$ and $h_0 \ll 1$.
Let  $\{u_l\}_{l=1}^{q}$ be an orthonormal basis of $M(\lambda)$.  Given constant $\theta \in(0,1)$. If
\begin{eqnarray}\label{mark-etah-eta-cond}
\sum_{T\in \mathcal{M}_H} \eta_H^2(E_H U, T) \geq \theta \eta_H^2(E_H U, \Omega),
\end{eqnarray}
where $E_H U=(E_H u_1,\cdots,E_H u_q)$,
then there exists some constant $\theta' \in (0, 1)$, such that
\begin{eqnarray}\label{mark-etah-eta-conc}
\sum_{T\in \mathcal{M}_H} \eta_H^2(U_H, T)  \geq \theta' \eta_H^2(U_H, \Omega).
\end{eqnarray}
\end{lemma}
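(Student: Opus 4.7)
The plan is to mirror the argument of Lemma~\ref{mark-eta-etah}, simply swapping the roles of $U_H$ and $E_H U$. The two ingredients I need were already extracted in the proof of Lemma~\ref{eta-etah}, namely the elementwise comparisons
\begin{eqnarray*}
\eta_H^2(E_H U, T) \le q\,\eta_H^2(U_H, T), \qquad
\eta_H^2(U_H, T) \le \frac{q}{1 - C\rho_{\Omega}(H)\delta_H(\lambda)}\,\eta_H^2(E_H U, T),
\end{eqnarray*}
valid for every $T \in \mathcal{T}_H$, where $C$ is independent of $H$. These are elementwise, so they can be summed over any subfamily and also over all of $\mathcal{T}_H$.

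First I would apply the first inequality elementwise over the marked set $\mathcal{M}_H$ to get a lower bound
\begin{eqnarray*}
\sum_{T \in \mathcal{M}_H} \eta_H^2(U_H, T) \ge \frac{1}{q} \sum_{T \in \mathcal{M}_H} \eta_H^2(E_H U, T)
\ge \frac{\theta}{q}\,\eta_H^2(E_H U, \Omega),
\end{eqnarray*}
where the second step uses the hypothesis \eqref{mark-etah-eta-cond}. Next I would use the second inequality, summed over all $T \in \mathcal{T}_H$, to turn the global quantity $\eta_H^2(E_H U, \Omega)$ into a bound involving $\eta_H^2(U_H, \Omega)$:
\begin{eqnarray*}
\eta_H^2(E_H U, \Omega) \ge \frac{1 - C\rho_{\Omega}(H)\delta_H(\lambda)}{q}\,\eta_H^2(U_H, \Omega).
\end{eqnarray*}
Chaining the last two displays yields
\begin{eqnarray*}
\sum_{T \in \mathcal{M}_H} \eta_H^2(U_H, T) \ge \frac{\theta\,\bigl(1 - C\rho_{\Omega}(H)\delta_H(\lambda)\bigr)}{q^2}\,\eta_H^2(U_H, \Omega).
\end{eqnarray*}

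Finally, since $H \in (0,h_0)$ with $h_0 \ll 1$, I can make $C\rho_{\Omega}(H)\delta_H(\lambda) \le \sigma$ for some small $\sigma$ (say $\sigma = 0.01$, to be consistent with the choice in Lemma~\ref{mark-eta-etah}), so that the conclusion holds with $\theta' = (1-\sigma)\theta/q^2$, giving $\theta' \in (0,1)$ as required. I do not anticipate any real obstacle here: the only nontrivial point, namely the two-sided elementwise equivalence between $\eta_H(U_H,T)$ and $\eta_H(E_H U, T)$ uniformly in $T$, was already established inside the proof of Lemma~\ref{eta-etah} via the expansions of $u_{h,l}$ and $v_{h,l}$ in each other's bases together with Corollaries~\ref{coro2.1}--\ref{coro2.2}. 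The present lemma is just the ``dual'' way of packaging that equivalence as a Dörfler property.
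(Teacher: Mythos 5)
Your proof is correct and matches exactly what the paper intends: the paper gives no separate proof for this lemma but says ``Similarly, we have,'' and your argument is precisely the dualization of the proof of Lemma~\ref{mark-eta-etah}, using the same two elementwise inequalities from Lemma~\ref{eta-etah} with their roles on the marked set and on $\Omega$ swapped, and arriving at the same constant $\theta' = (1-\sigma)\theta/q^2$.
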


\section{Convergence rate}\Label{convergence-sec}\setcounter{equation}{0}
%

Following  Theorem \ref{thm-eigen-boundary}, by using the similar arguments in \cite{dai-xu-zhou08}, we  can establish some relationship between the two level approximations, which will be used in our analysis
for  convergence rate.
\begin{lemma}\label{lemma-bound-eigen}
Let $h, H \in (0, h_0)$,  $\{u_l\}_{l=1}^{q}$ be any orthonormal basis of $M(\lambda)$, $U = (u_1, u_2, \cdots, u_q)$,   $\lambda^{H, l} = a(E_H u_l, E_H u_l)$, $w^{H, l}=\sum_{i=1}^{q} \alpha_{H,i}(u_l) \lambda_{H,
i} K u_{H,i}$ and  $W^H\equiv (w^{H,1}\cdots, w^{H,q})$. Then
 \begin{eqnarray}\label{lemma-bound-eigen-conc-1}
\|U - E_h U\|_{a, \Omega}= \|W^{H} - R_h W^{H} \|_{a, \Omega}
  + \mathcal{O} (\tilde{r}(h_0)) \left( \|U - E_h U
 \|_{a, \Omega} + \|U - E_H U\|_{a, \Omega}\right),
\end{eqnarray}
\begin{eqnarray}\label{lemma-bound-eigen-conc-2}
osc_h(E_h U, \Omega)= \widetilde{osc}_h(R_h W^{H}, \Omega)  + \mathcal{O} (\tilde{r}(h_0)) \left( \|U - E_h U\|_{a, \Omega} + \|U - E_H U\|_{a,
\Omega}\right),
\end{eqnarray}
and
\begin{eqnarray}\label{lemma-bound-eigen-conc-3}
\eta_{h}(E_h U, \Omega) =  \tilde{\eta}_{h}(R_{h} W^{H}, \Omega)
 + \mathcal{O}
 (\tilde{r}(h_0))\left( \|U - E_h U\|_{a, \Omega} + \|U -
E_H U\|_{a, \Omega}\right).
\end{eqnarray}
\end{lemma}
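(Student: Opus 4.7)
The plan is to promote Theorem \ref{thm-eigen-boundary} (which is a single-level identity relating $\|u-E_h u\|_{a,\Omega}$ to the auxiliary Galerkin residual $\|w^h-R_h w^h\|_{a,\Omega}$) to a two-level identity where the auxiliary source datum comes from level $H$, and then propagate this identity from the energy norm to the oscillation and the error estimator via the Lipschitz property of those functionals in their discrete argument.

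First I would establish the basic cross-level bound on the auxiliary sources. Applying the estimate (\ref{thm1-neq1}) from the proof of Theorem \ref{thm-eigen-boundary} at both levels gives $\|u_l-w^{h,l}\|_{a,\Omega}\lesssim \tilde r(h_0)\|u_l-E_h u_l\|_{a,\Omega}$ and $\|u_l-w^{H,l}\|_{a,\Omega}\lesssim \tilde r(h_0)\|u_l-E_H u_l\|_{a,\Omega}$, hence by the triangle inequality
\begin{equation*}
\|w^{h,l}-w^{H,l}\|_{a,\Omega}\lesssim \tilde r(h_0)\bigl(\|u_l-E_h u_l\|_{a,\Omega}+\|u_l-E_H u_l\|_{a,\Omega}\bigr).
\end{equation*}
Since $R_h$ is the $a$-orthogonal projection (energy-norm contraction) and $E_h u_l=R_h w^{h,l}$ by \eqref{u-w}, this immediately yields $\|E_h u_l-R_h w^{H,l}\|_{a,\Omega}$ and $\bigl|\|w^{h,l}-R_h w^{h,l}\|_{a,\Omega}-\|w^{H,l}-R_h w^{H,l}\|_{a,\Omega}\bigr|$ of the same order. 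Substituting the latter into the identity of Theorem \ref{thm-eigen-boundary}, then squaring and summing over $l=1,\dots,q$, delivers (\ref{lemma-bound-eigen-conc-1}).

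For (\ref{lemma-bound-eigen-conc-2}) and (\ref{lemma-bound-eigen-conc-3}), I would proceed in two sub-steps. First, treat $R_h W^H$ as the Galerkin approximation to the boundary value problem with right-hand side $LW^H=\sum_i\alpha_{H,i}(u_l)\lambda_{H,i}u_{H,i}$ and apply Lemma \ref{lemma-osc-L} (and the analogous trivial Lipschitz bound for the residual-based estimator) in $V_h^q$: this replaces $R_h w^{H,l}$ by $E_h u_l$ inside $\widetilde{osc}_h$ and $\tilde\eta_h$ at the cost of $\|E_h u_l-R_h w^{H,l}\|_{1,\omega_T}$, which is already controlled by $\tilde r(h_0)$ times the two energy errors. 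Second, I must compare $\widetilde{osc}_h(E_h u_l,T)$ with $osc_h(E_h u_l,T)$ (and likewise $\tilde\eta_h$ vs.\ $\eta_h$): the jump residuals coincide, and the element residuals differ only through the data term, $LW^H\;$ versus $\;\lambda^{h,l}E_h u_l=\lambda^{h,l}\sum_i\alpha_{h,i}(u_l)u_{h,i}$. A direct expansion using $|\lambda_{H,i}-\lambda^{h,l}|\lesssim \delta_H(\lambda)^2+\delta_h(\lambda)^2$, the closeness of $\alpha_{H,i}(u_l)$ to $\alpha_{h,i}(u_l)$, and $\|u_{h,i}-u_{H,i}\|_{b,\Omega}\lesssim\rho_\Omega(h_0)(\|u_l-E_h u_l\|_a+\|u_l-E_H u_l\|_a)$ gives the required $O(\tilde r(h_0))$ perturbation after summing $h_T^2\|\cdot\|_{0,T}^2$ over $T$ and $l$.

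The main obstacle is this last comparison of data terms: the two estimator/oscillation functionals are built from different right-hand sides (an $H$-level source for $\widetilde{osc}_h$/$\tilde\eta_h$ versus the $h$-level Rayleigh product for $osc_h$/$\eta_h$), so Lemma \ref{lemma-osc-L} alone does not close the gap. One must carefully decompose $LW^H-\lambda^{h,l}E_h U$, handling simultaneously the eigenvalue drift $\lambda_{H,i}\to\lambda$, the mixing coefficients $\alpha_{H,i}(u_l)$ versus $\alpha_{h,i}(u_l)$, and the fact that $\{u_{H,i}\}$ and $\{u_{h,i}\}$ are only $b$-orthonormal, not bases of the same space. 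Once a clean bound of the form $\|LW^H-\lambda^{h,l}E_h U\|_{0,T}\lesssim \tilde r(h_0)(\|U-E_h U\|_{a,\Omega}+\|U-E_H U\|_{a,\Omega})$ (localized by an element inverse inequality where needed) is in hand, the identities (\ref{lemma-bound-eigen-conc-2}) and (\ref{lemma-bound-eigen-conc-3}) follow by the triangle inequality and a final sum over $l$.
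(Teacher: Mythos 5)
Your treatment of (\ref{lemma-bound-eigen-conc-1}) coincides with the paper's: the triangle inequality on $w^{h,l}-w^{H,l}$ via (\ref{thm1-neq1}) applied at both levels, the exact relation $E_hu_l=R_hw^{h,l}$ from (\ref{u-w}), the $a$-stability of $R_h$, and the decomposition $u_l-E_hu_l=(w^{H,l}-R_hw^{H,l})+R_h(w^{H,l}-w^{h,l})+(u_l-w^{H,l})$ are exactly the ingredients the paper uses.

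For (\ref{lemma-bound-eigen-conc-2}) and (\ref{lemma-bound-eigen-conc-3}) you take a different route, and you explicitly stop short of the estimate you need. Your second sub-step compares the $H$-level source $Lw^{H,l}=\sum_i\alpha_{H,i}(u_l)\lambda_{H,i}u_{H,i}$ directly with the $h$-level source $\lambda^{h,l}E_hu_l$, and you correctly observe that this forces you to track the drift of the mixing coefficients $\alpha_{H,i}(u_l)$ versus $\alpha_{h,i}(u_l)$ and the proximity of two different discrete eigenbases; you sketch the ingredients but do not carry the bound through, and you yourself label it \emph{the main obstacle}. As written, that is a genuine gap. The paper avoids the cross-level source comparison entirely by pivoting through the $h$-level auxiliary $w^{h,l}=\sum_i\alpha_{h,i}(u_l)\lambda_{h,i}Ku_{h,i}$. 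Since $R_hw^{h,l}=E_hu_l$ exactly, the jump residuals $\tilde{J}_E(R_hw^{h,l})$ and $J_E(E_hu_l)$ coincide, and the element residuals differ only through the two $h$-level sources $Lw^{h,l}=\sum_i\alpha_{h,i}(u_l)\lambda_{h,i}u_{h,i}$ and $\lambda^{h,l}\sum_i\alpha_{h,i}(u_l)u_{h,i}$, i.e.\ a single-mesh, single-eigenbasis comparison. The paper reads $\eta_h(E_hu_l,\Omega)=\tilde{\eta}_h(R_hw^{h,l},\Omega)$ and $osc_h(E_hu_l,\Omega)=\widetilde{osc}_h(R_hw^{h,l},\Omega)$ as identities (strictly these differ by the contribution of $\sum_i\alpha_{h,i}(\lambda_{h,i}-\lambda^{h,l})u_{h,i}$, which is $O(\delta_h(\lambda)^2)$ and absorbable into the $\mathcal{O}(\tilde r(h_0))$ term, but the comparison is essentially trivial). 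The $h\leftrightarrow H$ transit is then done entirely in the boundary-value setting: Lemma \ref{lemma-osc-L} controls $\widetilde{osc}_h(R_h(w^{h,l}-w^{H,l}),\Omega)$, and the lower bound (\ref{boundary-lower}) controls $\tilde{\eta}_h(R_h(w^{h,l}-w^{H,l}),\Omega)$, both in terms of $\|R_h(w^{h,l}-w^{H,l})\|_{a,\Omega}$ and $\|u_l-w^{h,l}\|_{a,\Omega}+\|u_l-w^{H,l}\|_{a,\Omega}$, which are precisely the $\tilde r(h_0)$-bounds you already established for (\ref{lemma-bound-eigen-conc-1}). So the fix is to reorder your sub-steps: make the source identification on the single mesh $\mathcal{T}_h$ via $w^{h,l}$ first, and then transit from $w^{h,l}$ to $w^{H,l}$ purely through Lipschitz and residual estimates in the discrete argument, so that no cross-level source comparison is ever required.
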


\begin{proof}
It is sufficient to prove that for any $u\in M(\lambda)$ with $\|u\|_{b} = 1$, $w^{h}=\sum_{l=1}^{q} \alpha_{h, l}(u) \lambda_{h, l} K u_{h, l}$, and $w^{H}=\sum_{l=1}^{q} \alpha_{H,l}(u) \lambda_{H, l} K u_{H, l}$, the following equalities hold,
 \begin{eqnarray}\label{bound-eigen-conc-1}
~~ \|u - E_h u\|_{a, \Omega}&=& \|w^H - R_h w^H \|_{a, \Omega}
 + \mathcal{O} (\tilde{r}(h_0))\left( \|u - E_h u
 \|_{a, \Omega}+  \|u - E_H u\|_{a, \Omega}\right),
\end{eqnarray}
\begin{eqnarray}\label{bound-eigen-conc-2}
~~~~osc_h(E_h u, \Omega)&=&\widetilde{osc}_h(R_h w^H, \Omega)
 + \mathcal{O} (\tilde{r}(h_0))\left( \|u - E_h u\|_{a, \Omega} +
\|u - E_H u\|_{a, \Omega}\right),
\end{eqnarray}
and
\begin{eqnarray}\label{bound-eigen-conc-3}
 ~~~\eta_{h}(E_h u, \Omega) = \tilde{\eta}_{h}(R_h w^H, \Omega) + \mathcal{O}
 (\tilde{r}(h_0))\left( \|u - E_h u\|_{a, \Omega} + \|u -
E_H u\|_{a, \Omega}\right).
\end{eqnarray}

First, we prove (\ref{bound-eigen-conc-1}).  We see that
 \begin{eqnarray*} \|R_h (w^h - w^H)+ w^H -
u\|_{a,\Omega} &\lc& \| w^h - w^H\|_{a,\Omega}
+ \|u - w^H\|_{a,\Omega}\\
&\lc& \|u - w^H\|_{a,\Omega} + \|u - w^h\|_{a,\Omega},
\end{eqnarray*}
which together with (\ref{thm1-neq1}) leads to
\begin{eqnarray}\label{lem-bound-eigen1}
\|R_h (w^h - w^H) + w^H - u\|_{a,\Omega} \lc \tilde{r}(h_0)(\|u - E_H u\|_{a,\Omega} + \|u - E_h u\|_{a,\Omega}).
\end{eqnarray}
Note  that (\ref{u-w}) implies
 \begin{eqnarray}\label{lem-bound-eigen2}
u-E_h u=w^H - R_h w^H+R_h(w^H -w^h)+u-w^H,
 \end{eqnarray}
 we  get (\ref{bound-eigen-conc-1}) from
 (\ref{lem-bound-eigen1}).

Next, we prove (\ref{bound-eigen-conc-2}). We obtain from  Lemma
\ref{lemma-osc-L} that
 \begin{eqnarray}\label{lem-bound-eigen3}
\widetilde{osc}_h(R_h(w^h-w^H),\Omega)\lc  \|R_h (w^H - w^h)\|_{a, \Omega},
\end{eqnarray}
which together with (\ref{thm1-neq1}) and (\ref{lem-bound-eigen1}) yields
\begin{eqnarray}\label{lem-bound-eigen4}
 \widetilde{osc}_h(R_h (w^H - w^h), \Omega) \lc
\tilde{r}(h_0)(\|u - E_H u\|_{a,\Omega} + \|u - E_h u\|_{a,\Omega}).
  \end{eqnarray}
Due to $E_h u=R_h w^H+R_h(w^h-w^H)$, from
\begin{eqnarray*}\label{temp6}
\widetilde{osc}_h(R_h w^h,\Omega)=\widetilde{osc}_h(R_h w^H+R_h (w^h-w^H),\Omega),
\end{eqnarray*}
(\ref{lem-bound-eigen4}),
and the definition of oscillation,
we then arrive at (\ref{bound-eigen-conc-2}).

We finally  prove (\ref{bound-eigen-conc-3}).  By (\ref{boundary-lower}), we have
\begin{eqnarray*}\label{lem-bound-eigen5}
\tilde{\eta}_h(R_h( w^h - w^H), \Omega) &\lc& \| (w^h - w^H) -R_h(w^h - w^H)\|_{a, \Omega}+ \widetilde{osc}_h(R_h(w^h - w^H), \Omega)
\nonumber\\
&\lc & \|u - w^h\|_{a, \Omega} +  \|u - w^H\|_{a, \Omega} + \|R_h(w^h - w^H)\|_{a, \Omega},
\end{eqnarray*}
where  (\ref{lem-bound-eigen3}) is used in the last inequality. Using (\ref{thm1-neq1}) and (\ref{lem-bound-eigen1}), we obtain
\begin{eqnarray}\label{lem-bound-eigen6}
\tilde{\eta}_h(R_h( w^h - w^H), \Omega) &\lc& \tilde{r}(h_0)(\|u - E_H u\|_{a,\Omega} + \|u - E_h u\|_{a,\Omega}).
\end{eqnarray}
From (\ref{lem-bound-eigen6}) and the fact that
\begin{eqnarray*}
\tilde{\eta}_h(R_h w^h, \Omega) = \tilde{\eta}_h(R_h w^H + R_h( w^h - w^H), \Omega),
\end{eqnarray*}
we get
\begin{eqnarray*}
\tilde{\eta}_h(R_h w^h, \Omega) = \tilde{\eta}_h(R_h w^H, \Omega)
 + \mathcal{O} (\tilde{r}(h_0))\left( \|u - E_h u\|_{a, \Omega} + \|u -
E_H u\|_{a, \Omega}\right),
\end{eqnarray*}
which is nothing but (\ref{bound-eigen-conc-3}) since $\tilde{\eta}_h(R_h w^h, \Omega) = \eta_h(E_h u,\Omega)$.
\end{proof}

We are now in the position  to present and analyze the error reduction result.
\begin{theorem}\label{error-reduction}
Let $\lambda \in \mathbb{R}$  be some eigenpair of (\ref{eigen}) with multiplicity $q$, $\{u_l\}_{l=1}^{q}$ be any orthonormal basis of $M(\lambda)$, and
$\{(\lambda_{h_k, l}, u_{h_k, l}), l=1,\cdots,q\}_{k\in \mathbb{N}_0}$ be a sequence of finite element
 solutions
 produced by {\bf Algorithm \ref{algorithm-AFEM-eigen}}.
 Then there exist constants $\gamma>0$ and $\alpha \in (0,1)$,
  depending only on the shape regularity of meshes, $ C_a $ and $c_a$, the parameter $\theta$ used by
 {\bf Algorithm \ref{algorithm-AFEM-eigen}},
   such that for any two consecutive iterates $k$ and
 $k+1$, we have
  \begin{eqnarray}\label{error-reduction-neq}
 \|U-E_{h_{k+1}} U\|_{a,\Omega}^2+\gamma \eta^2_{h_{k+1}}(E_{h_{k+1}} U, \Omega)
   \leq \alpha^2  \left(\|U-E_{h_k} U\|_{a,\Omega}^2
   +\gamma \eta^2_{h_{k}}(E_{h_{k}} U, \Omega)\right)
  \end{eqnarray}
 provided $h_0\ll 1$.
  Therefore, {\bf Algorithm \ref{algorithm-AFEM-eigen}} converges with a linear rate $\alpha$,
 namely, the $n$-th iterate solution $(\lambda^{h_n, l}, E_{h_n}u_{l}) (l=1, \cdots, q)$ of
 {\bf Algorithm $C$} satisfies
\begin{eqnarray}\label{convergence-neq-1}
\|U-E_{h_n}U\|_{a,\Omega}^2+\gamma \eta^2_{h_{n}}(E_{h_{n}} U, \Omega)  &\leq C_0 \alpha^{2n}
  \end{eqnarray}
and
\begin{eqnarray}\label{convergence-eigenvalue}
  \lambda^{h_n, l} - \lambda \lc \alpha^{2n},
\end{eqnarray}
  where $C_0= \|U-E_{h_0} U\|_{a,\Omega}^2+\gamma \eta^2_{h_{0}}(E_{h_{0}} U, \Omega).$
\end{theorem}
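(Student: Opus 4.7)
The plan is to reduce one iteration of the eigenvalue adaptive loop to a single step of the source-problem adaptive loop, and then invoke the known BVP contraction of Theorem~\ref{convergence-boundary}. Fix $k$ and set $H=h_k$, $h=h_{k+1}$. Let $w^{H,l}=\sum_{i=1}^{q}\alpha_{H,i}(u_l)\lambda_{H,i}Ku_{H,i}$ and $W^H=(w^{H,1},\dots,w^{H,q})$, so that $E_H U = R_H W^H$ by (\ref{u-w}) and $LW^H$ is a fixed source on which $R_H$ and $R_h$ are the Galerkin projections. Lemma~\ref{lemma-bound-eigen} then identifies the eigen-triple
\begin{equation*}
\bigl(\|U-E_hU\|_{a,\Omega},\,\eta_h(E_h U,\Omega),\,osc_h(E_h U,\Omega)\bigr)
\end{equation*}
with the corresponding BVP triple
\begin{equation*}
\bigl(\|W^H-R_h W^H\|_{a,\Omega},\,\tilde\eta_h(R_h W^H,\Omega),\,\widetilde{osc}_h(R_h W^H,\Omega)\bigr)
\end{equation*}
up to an $\mathcal{O}(\tilde r(h_0))$ multiple of $\|U-E_hU\|_{a,\Omega}+\|U-E_HU\|_{a,\Omega}$; the same identification at level $H$ comes from Theorem~\ref{thm-eigen-boundary}.

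Next, the set $\mathcal{M}_H$ produced by Algorithm~\ref{algorithm-AFEM-eigen} is $\theta$-D\"orfler for $\eta_H(U_H,\cdot)$. By Lemma~\ref{mark-eta-etah} the same $\mathcal{M}_H$ is $\theta'$-D\"orfler for $\eta_H(E_HU,\cdot)$, and combining this with the element-wise equivalence $\eta_H(E_HU,T)\cong\tilde\eta_H(R_HW^H,T)$ (together with Lemma~\ref{lemma-osc-L} for the oscillation) one obtains a $\tilde\theta$-D\"orfler condition for the source-problem indicator $\tilde\eta_H(R_HW^H,\cdot)$, $\tilde\theta\in(0,1)$, provided $h_0$ is small enough. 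This puts us in the hypothesis of the one-step contraction of Theorem~\ref{convergence-boundary} applied to the source $LW^H$, yielding constants $\tilde\gamma>0$ and $\xi\in(0,1)$ with
\begin{equation*}
\|W^H-R_{h_{k+1}}W^H\|_{a,\Omega}^2+\tilde\gamma\,\tilde\eta_{h_{k+1}}^2(R_{h_{k+1}}W^H,\Omega)\le \xi^2\bigl(\|W^H-R_H W^H\|_{a,\Omega}^2+\tilde\gamma\,\tilde\eta_H^2(R_H W^H,\Omega)\bigr).
\end{equation*}

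Substituting the identifications of the first paragraph into both sides and absorbing the $\mathcal{O}(\tilde r(h_0))$ cross-terms into the left-hand side (possible because $\tilde r(h_0)\ll 1$), one obtains (\ref{error-reduction-neq}) with a suitably reduced $\gamma>0$ and a contraction factor $\alpha\in(\xi,1)$. Iterating from $k=0$ to $n-1$ gives (\ref{convergence-neq-1}), and (\ref{convergence-eigenvalue}) then follows from Proposition~\ref{prop3.2} combined with Corollary~\ref{coro2.1}, which deliver $|\lambda-\lambda^{h_n,l}|\lc\|u_l-E_{h_n}u_l\|_{a,\Omega}^2$ and hence the quadratic decay rate $\alpha^{2n}$.

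The main obstacle is precisely that the auxiliary source $W^H$ depends on the iteration index $k$: one cannot simply invoke the full convergence of Algorithm~\ref{algorithm-AFEM-bvp}, only its one-step contraction, and the $\mathcal{O}(\tilde r(h_0))$ perturbations in the error, the estimator, the oscillation, and the D\"orfler condition must all be tracked simultaneously and then absorbed into a single contraction constant by choosing $h_0$ sufficiently small and calibrating the weight $\gamma$ against $\tilde\gamma$ so that $\alpha<1$ holds uniformly in $k$.
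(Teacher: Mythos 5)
Your proposal follows the same strategy as the paper's proof: reduce the eigenvalue step to a one-step contraction for the source problem $LW^H$ via Theorem~\ref{thm-eigen-boundary} and Lemma~\ref{lemma-bound-eigen}, transfer the D\"orfler condition from $\eta_H(U_H,\cdot)$ to the source-problem indicator through Lemma~\ref{mark-eta-etah} and the identification $E_HU=R_HW^H$, invoke Theorem~\ref{convergence-boundary}, and absorb the $\mathcal{O}(\tilde r(h_0))$ perturbations by Young's inequality with $h_0$ small and a recalibrated $\gamma$. The conclusion~(\ref{convergence-eigenvalue}) via Proposition~\ref{prop3.2} and Corollary~\ref{coro2.1} is also the intended route, so the proof is essentially identical to the paper's.
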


\begin{proof}
For convenience, we use  $(\lambda_{h,l}, u_{h,l})$, $(\lambda_{H,l}, u_{H, l})$ to denote $(\lambda_{h_{k+1}, l}, u_{h_{k+1}, l})$ and
$(\lambda_{h_k,l}, u_{h_k,l})$, respectively. We see that it is sufficient to prove
\begin{eqnarray*}\label{error-reduction-neq-2}
\|U-E_h U\|_{a,\Omega}^2 + \gamma \eta^2_{h}(E_{h}U, \Omega)\leq \alpha^2 \big(\|U-E_H U\|_{a,\Omega}^2 +\gamma \eta^2_{H}(E_H U, \Omega)\big).
\end{eqnarray*}

We derive from Lemma \ref{eta-etah} that {\bf D\"{o}rfler  marking strategy}
implies that there exists a constant $\theta' \in (0,1)$ such that
\begin{eqnarray}\label{eta-etah2}
\sum_{T\in \mathcal{M}_H} \eta_H^2(E_H U, T) \geq \theta' \eta_H^2(E_H U, \Omega).
\end{eqnarray}
Recall that $w^{H, l} = \sum_{i=1}^{q} \alpha_{H,i}(u_l) \lambda_{H, i} K u_{H, i}$, we get from (\ref{eta-etah2}) that  for  $W^H\equiv(w^{H,
1},\cdots,w^{H,q})$, {\bf  D\"{o}rfler marking strategy} is satisfied with $\theta = \theta'$.  So, we conclude from Theorem \ref{convergence-boundary}   that there
exist constants $\tilde{\gamma}>0$ and $\xi\in (0,1)$ satisfying
\begin{eqnarray}\label{error-reduction-neq-4}
\|W^{H}-R_h W^{H}\|_{a,\Omega}^2 +\tilde{\gamma} \tilde{\eta}^2_h(R_h W^{H}, \Omega) \leq  \xi^2\big(\|W^{H}- E_H U \|_{a,\Omega}^2+ \tilde{\gamma}
\eta^2_H(E_H U, \Omega)\big),
\end{eqnarray}
where  the fact $E_H U = R_H W^{H}$ and $  \tilde{\eta}^2_h(R_H W^{H}, \Omega) =\eta^2_h(E_H U, \Omega)$ are used.

 From (\ref{thm1-neq1}), we get that there exists constant $\hat{C}_1>0$ such that
 \begin{eqnarray}\label{convergence-neq2}
\Big(1+\hat{C}_1 \tilde{r} (h_0)\Big) \|U-E_H U\|_{1,\Omega}^2 + \tilde{\gamma}  \eta^2_{H}(E_H U, \Omega) \ge \|W^{H}- E_H U\|_{1,\Omega}^2
+ \tilde{\gamma}  \eta^2_H(E_H U, \Omega).  \quad
\end{eqnarray}
By Lemma \ref{lemma-bound-eigen} and the Young's inequality,  we have that for any $\delta_1 \in (0, 1)$, there exists constant $ \hat{C}_2 >0$ such that
\begin{eqnarray}\label{convergence-neq3}
\|U-E_h U\|_{1,\Omega}^2 +  \tilde{\gamma} \eta^2_{h}(E_h U,\Omega) \le (1+\delta_1) \|W^{H}-R_h W^{H}\|_{1,\Omega}^2
+  (1+\delta_1) \tilde{\gamma} \tilde{\eta}^2_h(R_h W^{H}, \Omega) \nonumber \\
+\hat{C}_2(1 + \delta_1^{-1}) \tilde{r}^2(h_0) \big(\|U-E_h U\|_{1,\Omega}^2 + \|U-E_H U\|_{1,\Omega}^2\big).\quad
\end{eqnarray}
Here, we choose $\delta_1$ satisfying $(1+\delta_1) \xi <1$.

Combining~\eqref{error-reduction-neq-4}, \eqref{convergence-neq2} with \eqref{convergence-neq3}, we get that
\begin{eqnarray*}
&&\Big(1 - \hat{C}_2(1 + \delta_1^{-1}) \tilde{r}^2(h_0)\Big)\|U-E_h U\|_{1,\Omega}^2+ \tilde{\gamma} \eta^2_{h}(E_h U, \Omega)\nonumber\\
&\leq& \Big((1+\delta_1) \xi^2 +  (1+\delta_1) \xi^2 \hat{C}_1 \tilde{r}(h_0)  + \hat{C}_2(1+\delta_1^{-1})\tilde{r}^2(h_0)\Big) \|U-E_H U\|_{1,\Omega}^2 \nonumber\\
&& +(1+\delta_1)) \xi^2 \tilde{\gamma} \eta^2_{H}(E_H U, \Omega). \quad
\end{eqnarray*}
Since $h_0\ll 1$ implies ${\tilde{r}}(h_0)\ll 1$, there holds
\begin{eqnarray*}
&&\|U-E_h U\|_{1,\Omega}^2+\frac{\tilde{\gamma}}{1-\hat{C}_3\delta_1^{-1} \tilde{r}^2(h_0)}\eta^2_{h}(E_h U, \Omega) \\
&\leq& \frac{(1+\delta_1)  \xi^2+\hat{C}_3 \tilde{r}(h_0)}{1-\hat{C}_3\delta_1^{-1}\tilde{r}^2(h_0)}
\left(\|U-E_H U\|_{1,\Omega}^2 +\frac{ \xi^2\tilde{\gamma}}{(1+\delta_1)   \xi^2
+\hat{C}_3 \tilde{r}(h_0)}\eta^2_{H}(E_H U, \Omega)\right),
\end{eqnarray*}
with $\hat{C}_3$ some constant depending on $\hat{C}_1$ and $\hat{C}_2$ when $h_0 \ll 1$.
Besides, we see that the constant $\alpha$ defined by
\begin{eqnarray*}
\alpha = \left(\frac{ (1+ \delta_1)  \xi^2 +  \hat{C}_3
\tilde{r}(h_0)}{1 - \hat{C}_3 \delta_1^{-1} \tilde{r}^2(h_0)}\right)^{1/2}
\end{eqnarray*}
satisfies $\alpha \in (0,1)$ when $h_0\ll 1$.

Finally, we arrive at (\ref{error-reduction-neq}) by using the fact that
\begin{eqnarray*}
\frac{ \xi^2 \tilde{\gamma}}{ (1+ \delta_1)
\xi^2 + \hat{C}_3\tilde{r}(h_0) }<\gamma,
\end{eqnarray*}
where
\begin{eqnarray}\label{gamma}
\gamma = \frac{\tilde{\gamma}}{1 - \hat{C}_3 \delta_1^{-1}\tilde{r}^2(h_0)}.
\end{eqnarray}
This completes the proof.
\end{proof}

Similar to Theorem \ref{thm-error-estimator-space}, we can also get the convergence rate for the gap between
$M(\lambda)$ and its finite elements approximation $M_{h_k}(\lambda)$.
\begin{theorem}\label{thm-convergence-rate-eigenspace}
Let  $\lambda \in \mathbb{R}$  be some eigenvalue of (\ref{eigen}) with multiplicity $q$ and the corresponding eigenspace being $M(\lambda) = span\{u_1, \cdots, u_q\}$,
$\{(\lambda_{h_k, l}, u_{h_k, l}), l=1,\cdots,q\}_{k\in \mathbb{N}_0}$ be a sequence of finite element
 solutions
 produced by {\bf Algorithm \ref{algorithm-AFEM-eigen}}. Set $M_{h_k}(\lambda) = span\{u_{h_k, 1}, \cdots, u_{h_{k}, q}\}$.  If $h_0\ll 1$,
 then there exists constant  $\alpha \in (0,1)$,
  depending only on the shape regularity of meshes, $ C_a $ and $c_a$, the parameter $\theta$ used by
 {\bf Algorithm \ref{algorithm-AFEM-eigen}}, such that
 {\bf Algorithm \ref{algorithm-AFEM-eigen}} satisfies
\begin{eqnarray}\label{thm-4.2-conc}
\delta^2_{H_0^1(\Omega)}(M(\lambda), M_{h_k}(\lambda)) \lc \alpha^{2k}.
  \end{eqnarray}
\end{theorem}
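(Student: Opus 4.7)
The plan is to bootstrap Theorem \ref{error-reduction} — which already gives geometric decay of $\|U-E_{h_k}U\|_{a,\Omega}^2+\gamma\,\eta^2_{h_k}(E_{h_k}U,\Omega)$ — into geometric decay of the gap, by replaying the chain of equivalences used in the proof of Theorem \ref{thm-error-estimator-space}. Concretely, fix an orthonormal basis $\{u_l\}_{l=1}^q$ of $M(\lambda)$ and set $U=(u_1,\dots,u_q)$. Theorem \ref{error-reduction} immediately yields
\[
\|U-E_{h_k}U\|_{a,\Omega}^{2}\le \|U-E_{h_k}U\|_{a,\Omega}^{2}+\gamma\,\eta_{h_k}^{2}(E_{h_k}U,\Omega)\le C_{0}\,\alpha^{2k}.
\]
So the whole game is to show $\delta^{2}_{H_0^1(\Omega)}(M(\lambda),M_{h_k}(\lambda))\lc \|U-E_{h_k}U\|_{a,\Omega}^{2}$, uniformly in $k$.

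First I would control one direction of the gap. For any $u\in M(\lambda)$ with $\|u\|_b=1$ write $u=\sum_{l}c_l u_l$ with $\sum_l c_l^2=1$; since $E_{h_k}u=\sum_l c_l E_{h_k}u_l\in M_{h_k}(\lambda)$, Cauchy--Schwarz gives
\[
\inf_{v\in M_{h_k}(\lambda)}\|u-v\|_{a,\Omega}\le \|u-E_{h_k}u\|_{a,\Omega}\le \sqrt{q}\,\|U-E_{h_k}U\|_{a,\Omega},
\]
so that $d_{H_0^1(\Omega)}(M(\lambda),M_{h_k}(\lambda))\lc \|U-E_{h_k}U\|_{a,\Omega}$. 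For the reverse direction I would invoke Lemma \ref{thm-dxy-dyx}: since $\dim M(\lambda)=\dim M_{h_k}(\lambda)=q$ and, by the bound just obtained together with the decay above, $d_{H_0^1(\Omega)}(M(\lambda),M_{h_k}(\lambda))$ is eventually (and can be made from the start, by enlarging $k_0$ or absorbing a constant) bounded away from $1$, we deduce
\[
d_{H_0^1(\Omega)}(M_{h_k}(\lambda),M(\lambda))\lc d_{H_0^1(\Omega)}(M(\lambda),M_{h_k}(\lambda)).
\]
Combining these two inequalities with the definition \eqref{def-gap} of $\delta_{H_0^1(\Omega)}$ gives
\[
\delta_{H_0^1(\Omega)}(M(\lambda),M_{h_k}(\lambda))\lc \|U-E_{h_k}U\|_{a,\Omega},
\]
and squaring and inserting the geometric bound from Theorem \ref{error-reduction} yields the claim \eqref{thm-4.2-conc}.

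The step I expect to require the most care is justifying Lemma \ref{thm-dxy-dyx} uniformly in $k$: the bound $d\,(1-d)^{-1}$ is only useful when $d<1$, so one must either (i) absorb the initial iterates (where $d$ might be close to $1$) into the constant, using that only finitely many of them occur before the geometric decay kicks in under the assumption $h_0\ll 1$, or (ii) argue from Proposition \ref{prop3.1} and the convergence $\rho_\Omega(h_k)\to 0$ that $d_{H_0^1(\Omega)}(M(\lambda),M_{h_k}(\lambda))$ is already small for $k=0$ when $h_0$ is small enough. Either route keeps the hidden constant in \eqref{thm-4.2-conc} independent of $k$, and the remaining algebra — the orthonormal basis expansion and Cauchy--Schwarz — is routine since $q$ is a fixed finite multiplicity.
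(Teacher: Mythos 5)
Your proposal matches the paper's proof almost line by line: expand $u$ in an orthonormal basis of $M(\lambda)$, use Cauchy--Schwarz to get $\|u-E_{h_k}u\|_{a,\Omega}\le\sqrt{q}\,\|U-E_{h_k}U\|_{a,\Omega}$, invoke Theorem \ref{error-reduction} (via \eqref{convergence-neq-1}) for the geometric decay, and then apply Lemma \ref{thm-dxy-dyx} to control $d_{H_0^1(\Omega)}(M_{h_k}(\lambda),M(\lambda))$ by $d_{H_0^1(\Omega)}(M(\lambda),M_{h_k}(\lambda))$. You are in fact slightly more careful than the paper in flagging that Lemma \ref{thm-dxy-dyx} requires $d<1$ uniformly in $k$ (which the paper silently absorbs into the standing hypothesis $h_0\ll 1$), but the argument is the same.
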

\begin{proof}
Let $\{u_1, \cdots, u_q\}$ be an orthonormal basis of $M(\lambda)$.
For any $u\in M(\lambda)$ with $\|u\|_b$ = 1, there are $\{\alpha_1,\alpha_2,\cdots,
\alpha_q\}\subset R$ such that $u = \sum_{l=1}^q \alpha_l u_l$, and $\sum_{l=1}^{q} \alpha_l^2 = 1$. Therefore,
we may estimate as follows
 \begin{eqnarray}\label{thm-4.2-neq-1}
 &&\| u - E_{h_k} u\|^2_{a, \Omega} = \|\sum_{l=1}^q \alpha_l(u_l - E_{h, k} u_l)\|^2_{a, \Omega} \nonumber\\
 &\leq & \sum_{l=1}^{q} |\alpha_l|^2 \sum_{l=1}^{q} \|u_l-E_{h_k}u_l\|_{a, \Omega}  = \sum_{l=1}^{q}  \|u_l-E_{h_k}u_l\|^2_{a, \Omega} \nonumber\\
 &\leq &\sum_{l=1}^{q} \|u_l - E_{h_k} u_l\|_{a,\Omega}^2+\gamma \eta^2_{h_{k}}(E_{h_{k}} u_l,  \Omega) \lc \alpha^{2k},
\end{eqnarray}
where (\ref{convergence-neq-1}) is used in the last inequality above and $\alpha \in (0, 1)$ is the one in Theorem \ref{error-reduction}.
Therefore, from the definition of $d^2_{H_0^1(\Omega)}(M(\lambda), M_{h_k}(\lambda))$, we get
\begin{eqnarray}\label{thm-4.2-neq-2}
d^2_{H_0^1(\Omega)}(M(\lambda), M_{h_k}(\lambda)) \lc \alpha^{2k}
\end{eqnarray}

In further, we get
\begin{eqnarray}\label{thm-4.2-neq-3}
d^2_{H_0^1(\Omega)}(M_{h_k}(\lambda), M(\lambda)) \lc \alpha^{2k}
  \end{eqnarray}
  from  Lemma \ref{thm-dxy-dyx} and (\ref{thm-4.2-neq-2}).

  Combining (\ref{thm-4.2-neq-1}), (\ref{thm-4.2-neq-2}), and the definition of $\delta_{H_0^1(\Omega)}(M_{h_k}(\lambda), M(\lambda))$,
  we obtain (\ref{thm-4.2-conc}).

 %
\end{proof}

\section{Complexity}\Label{complexity}\setcounter{equation}{0}


As \cite{cascon-kreuzer-nochetto-siebert08, dai-xu-zhou08}, to analyze the complexity of {\bf Algorithm \ref{algorithm-AFEM-eigen}},
we first introduce a function approximation class as follows
\begin{eqnarray*}
\mathcal{A}_{\gamma}^s=\{v \in H: |v|_{s,\gamma} < \infty \},
\end{eqnarray*}
where $\gamma>0$ is some constant,
\begin{eqnarray*}
|v|_{s,\gamma} = \sup_{\varepsilon
>0}\varepsilon \inf_{\{\mathcal{T}\subset \mathcal{T}_{h_0}:
\inf(\|v-v_{\mathcal{T}}\|_{1,\Omega}^2 + (\gamma +1) osc^2_{\mathcal{T}}(v_{\mathcal{T}}, \mathcal{T}))^{1/2} \leq \varepsilon\}} \big(\#\mathcal{T} -
\# \mathcal{T}_{h_0}\big)^s
\end{eqnarray*}
and  $\mathcal{T}\subset \mathcal{T}_{h_0}$ means $\mathcal{T}$ is a refinement of $\mathcal{T}_{h_0}$. It is seen from the definition that, for all
$\gamma>0$, $\mathcal{A}_{\gamma}^s = \mathcal{A}_{1}^s$. For simplicity, here and hereafter, we use $\mathcal{A}^s$ to stand for  $\mathcal{A}_{1}^s$,
and use $|v|_{s}$ to denote
 $|v|_{s, \gamma}$. So $\mathcal{A}^s$ is the class of
functions that can be approximated within a given tolerance $\varepsilon$ by continuous piecewise polynomial functions over a partition $\mathcal{T}$
with number of degrees of freedom $\#\mathcal{T}-\# \mathcal{T}_{h_0} \leq \varepsilon^{-1/s} |v|_{s}^{1/s}$.

We know that in each mesh $\mathcal{T}_{h_k}$, $w^{h_k, l} = \sum_{i=1}^{q} \alpha_{h_k,i}(u_l) \lambda_{h_k, i} K u_{h_k, i}$ is the solution of the following boundary value problem
\begin{eqnarray}\label{auxiliary-boundary-eq}
 a(w^{h_k, l}, v) = (\sum_{i=1}^{q} \alpha_{h_k,i}(u_l) \lambda_{h_k, i} u_{h_k, i}, v) ~~ \forall v\in
 V_{h_k}, ~~l = 1, \cdots, q.
\end{eqnarray}
Thanks to Theorem \ref{thm-eigen-boundary} and Lemma \ref{lemma-bound-eigen} and their proofs, we are able to analyze the complexity of adaptive finite element method for
multiple eigenvalue problems by using the complexity result for boundary value problems, which is similar to what was demonstrated in the convergence analysis.

Using the similar procedure as in the proof of Theorem \ref{error-reduction}, we have
\begin{lemma}\label{complexity-eigen-boundary}
Let $\lambda \in \mathbb{R}$  be some eigenvalue of (\ref{eigen}) with multiplicity $q$, and $\{u_l\}_{l=1}^q$ be an orthonormal basis of $M(\lambda)$.
Let $(\lambda_{h_k, l}, u_{h_k, l}) \in \mathbb{R}\times V_{h_k}$ and $(\lambda_{h_{k+1}, l},u_{h_{k+1}, l}) \in \mathbb{R} \times V_{h_{k+1}}$ $(l=1,
\cdots, q)$ be discrete
 solutions of (\ref{fe-eigen}) over a
conforming mesh $\mathcal{T}_{h_k}$ and its refinement $\mathcal{T}_{h_{k+1}}$ with marked
 element $\mathcal{M}_{h_{k}}$. Suppose they
 satisfy the following property
 \begin{eqnarray*}\label{lemma-complexity-eigen-bound-cond2}
  &&\|U - E_{h_{k+1}} U \|_{a,\Omega}^2 + \gamma_{\ast} osc^2_{h_{k+1}}(E_{h_{k+1}} U, \Omega)\nonumber\\
  & \leq&   \beta_{\ast}^2 \Big( \|U-E_{h_k} U\|_{a,\Omega}^2 + \gamma_{\ast} osc^2_{h_{k}}(E_{h_{k}}U, \Omega)\Big),
 \end{eqnarray*}
 where $ \gamma_{\ast} >0, \beta_{\ast}>0$ are some constants.
Then for the associated boundary value problem (\ref{auxiliary-boundary-eq}), we have
\begin{eqnarray*}\label{lemma-complexity-eigen-bound-conc2}
&&  \|W^{h_k} - R_{h_{k+1}} W^{h_k} \|_{a,\Omega}^2 + \tilde{\gamma}_{\ast} \widetilde{osc}^2_{h_{k+1}}(R_{h_{k+1}} W^{h_k},
\Omega)\nonumber\\
   &\leq& \tilde{\beta}_{\ast}^2 \Big( \|W^{h_k} -R_{h_k} W^{h_k}\|^2_{a,\Omega}
+ \tilde{\gamma}_{\ast} \widetilde{osc}^2_{h_k}(E_{h_k} U, \Omega)\Big)
\end{eqnarray*}
with
\begin{eqnarray}\label{complexity-eigen-boundary-beta-gamma}
\tilde{\beta}_{\ast} = \left(\frac{\beta_{\ast}^2(1+\delta_1)+\hat{C}_4
\tilde{r}(h_0)}{1-\hat{C}_4 \delta_1^{-1} \tilde{r}^2(h_0)}\right)^{1/2}, \quad
\tilde{\gamma}_{\ast} =
 \frac{\gamma_{\ast}}{1 -\hat{C}_4 \delta_1^{-1} \tilde{r}^2(h_0)},
\end{eqnarray}
where $\hat{C}_4 $ is some positive constant depending on   $A$ , $\tilde{C}$ and $C_{\ast}$, $\delta_1 \in (0, 1)$ is some constant as shown in the proof Theorem
\ref{error-reduction}.
\end{lemma}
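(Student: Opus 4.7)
The plan is a perturbation argument that mirrors the proof of Theorem \ref{error-reduction} but runs in the opposite direction: there, a contraction on the boundary-value quantities was transferred to the eigenvalue quantities; here, a given contraction on the eigenvalue quantities must be transferred back to the associated boundary-value problem \eqref{auxiliary-boundary-eq}. The essential tool is Lemma \ref{lemma-bound-eigen}, which asserts that up to $O(\tilde r(h_0))$ perturbations the eigenvalue quantities $\|U-E_h U\|_{a,\Omega}$, $osc_h(E_h U,\Omega)$ and the boundary-value quantities $\|W^H - R_h W^H\|_{a,\Omega}$, $\widetilde{osc}_h(R_h W^H,\Omega)$ are interchangeable.

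First I would apply Lemma \ref{lemma-bound-eigen} on mesh $h_{k+1}$ with reference mesh $h_k$, square both identities via Young's inequality with parameter $\delta_1 \in (0,1)$, multiply the oscillation identity by $\tilde\gamma_\ast$, and add, obtaining
\begin{eqnarray*}
 \|W^{h_k} - R_{h_{k+1}}W^{h_k}\|_{a,\Omega}^2 + \tilde\gamma_\ast\widetilde{osc}^2_{h_{k+1}}(R_{h_{k+1}}W^{h_k},\Omega)
 &\le& (1+\delta_1)\bigl(\|U-E_{h_{k+1}}U\|^2_{a,\Omega} + \tilde\gamma_\ast osc^2_{h_{k+1}}(E_{h_{k+1}}U,\Omega)\bigr) \\
 && {} + \hat C(1+\delta_1^{-1})\tilde r^2(h_0)\bigl(\|U-E_{h_{k+1}}U\|^2_{a,\Omega} + \|U-E_{h_k}U\|^2_{a,\Omega}\bigr),
\end{eqnarray*}
for a constant $\hat C$ depending on $A$, $\tilde C$ and $C_\ast$. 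I would then invoke the hypothesis to replace the eigenvalue term in the first bracket by $\beta_\ast^2\bigl(\|U-E_{h_k}U\|^2_{a,\Omega} + \gamma_\ast osc^2_{h_k}(E_{h_k}U,\Omega)\bigr)$, so that the dominant contribution on the right lives entirely on mesh $h_k$.

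Next I would apply Lemma \ref{lemma-bound-eigen} a second time, now with $h=H=h_k$ (which reduces to Theorem \ref{thm-eigen-boundary} together with its oscillation analogue, and uses the identity $R_{h_k}W^{h_k}=E_{h_k}U$), to bound $\|U-E_{h_k}U\|^2_{a,\Omega} + \gamma_\ast osc^2_{h_k}(E_{h_k}U,\Omega)$ from above by a $(1+O(\tilde r^2(h_0)))$ multiple of $\|W^{h_k}-R_{h_k}W^{h_k}\|^2_{a,\Omega} + \tilde\gamma_\ast\widetilde{osc}^2_{h_k}(E_{h_k}U,\Omega)$. Absorbing the leftover term $\hat C(1+\delta_1^{-1})\tilde r^2(h_0)\|U-E_{h_{k+1}}U\|^2_{a,\Omega}$ into the left-hand side fixes both $\tilde\gamma_\ast = \gamma_\ast/(1 - \hat C_4\delta_1^{-1}\tilde r^2(h_0))$ and the precise form of $\tilde\beta_\ast$ in \eqref{complexity-eigen-boundary-beta-gamma}.

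The main obstacle is the bookkeeping of these constants: each application of Young's inequality spawns $(1+\delta_1)$ and $(1+\delta_1^{-1})\tilde r^2(h_0)$ factors on both sides, and $\tilde\gamma_\ast$ must be calibrated so that the $O(\tilde r^2(h_0))$ perturbations produced by Lemma \ref{lemma-bound-eigen} on the $h_k$-side can be cleanly absorbed while the precise contraction factor $\tilde\beta_\ast$ in \eqref{complexity-eigen-boundary-beta-gamma} emerges. Because $\tilde r(h_0)\ll 1$ whenever $h_0 \ll 1$, all such absorptions are controllable, exactly as in the corresponding step of the proof of Theorem \ref{error-reduction}.
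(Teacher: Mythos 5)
Your proposal is correct and follows essentially the same route as the paper: apply Lemma \ref{lemma-bound-eigen}, insert the hypothesis, and run the perturbation/absorption argument of Theorem \ref{error-reduction} in reverse. The only cosmetic difference is that the paper restates the Lemma \ref{lemma-bound-eigen} identities so that the $\mathcal{O}(\tilde r(h_0))$ remainders are already expressed in the boundary-value quantities $\|W^{H}-R_{H}W^{H}\|$ and $\|W^{H}-R_{h}W^{H}\|$, which avoids the explicit second application of the lemma you make to convert the eigenvalue-form remainders back to boundary-value form.
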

\begin{proof}
We observe from Lemma \ref{lemma-bound-eigen} that
\begin{eqnarray*}
\|U-E_h U  \|_{1,\Omega}&=& \|W^{H}-R_h W^{H} \|_{1,\Omega}
+ \mathcal{O} (\tilde{r}(h_0)) \left( \|W^{H}-R_H W^{H} \|_{1,\Omega}+ \|W^{H}-R_h W^{H}\|_{1,\Omega}\right),
\end{eqnarray*}
\vskip -0.6cm
\begin{eqnarray*}
{\rm osc}_h(E_h U,\Omega)&=& \widetilde{{\rm osc}}_h(R_h W^{H}, \Omega) +
\mathcal{O} (\tilde{r}(h_0)) \left( \|W^{H}-R_H W^{H}  \|_{1,\Omega} + \|W^{H}-R_h W^{H} \|_{1,\Omega}\right),
\end{eqnarray*}

Proceeding the similar procedure as in the proof of Theorem  \ref{error-reduction}, we have
\begin{eqnarray}\label{lemma-complexity-eigen-bound-conc2}
\|W^H-R_h W^H\|_{a,\Omega}^2 + \tilde{\gamma}_{\ast} \widetilde{{\rm osc}}_h^2 (R_h W^H, \Omega)
\leq \tilde{\beta}_{\ast}^2 \big(\|W^H-R_H W^H\|_{a,\Omega}^2 +
\tilde{\gamma}_{\ast} \widetilde{{\rm osc}}_H^2 (R_H W^H, \Omega)\big)
\end{eqnarray}
with
\begin{eqnarray}\label{complexity-eigen-boundary-beta-gamma}
\tilde{\beta}_{\ast} = \left(\frac{\beta_{\ast}^2(1+\delta_1)+\hat{C}_4
\tilde{r}(h_0)}{1-\hat{C}_4 \delta_1^{-1} \tilde{r}^2(h_0)}\right)^{1/2}, \quad
\tilde{\gamma}_{\ast} =
 \frac{\gamma_{\ast}}{1 -\hat{C}_4 \delta_1^{-1} \tilde{r}^2(h_0)},
\end{eqnarray}
where $\hat{C}_4 $ is some positive constant and $\delta_1 \in (0, 1)$ is some constant as shown
in the proof of Theorem \ref{error-reduction}.
 This completes the
proof.
\end{proof}

The following statement is a direct consequence of $E_{h_k} u_l=R_{h_k}w^{h_k, l}$, Proposition \ref{complexity-boundary-optimal-marking} and Lemma
\ref{complexity-eigen-boundary}.
\begin{corollary}\label{complexity-eigen-optimal-marking}
Let $\lambda \in \mathbb{R}$  be some eigenpair of (\ref{eigen}) with multiplicity $q$, and $\{u_l\}_{l=1}^q$ be an orthonormal basis of $M(\lambda)$.
 Suppose that they satisfy the decrease property
\begin{eqnarray*}
&& \|U - E_{h_{k+1}} U \|_{a,\Omega}^2 + \gamma_{\ast} osc^2_{h_{k+1}}(E_{h_{k+1}} U, \Omega) \nonumber\\&&\leq
   \beta_{\ast}^2 \Big(\|U-E_{h_k} U\|_{a,\Omega}^2 + \gamma_{\ast} osc^2_{h_{k}}(E_{h_{k}} U, \Omega)\Big)
\end{eqnarray*}
with constants $\gamma_{\ast}>0$  and $\beta_{\ast}\in (0,\sqrt{\frac{1}{2}})$. Then the set $\mathcal{R} :=\mathcal{R}_{\mathcal{T}_{h_k}\to \mathcal{T}_{h_{k+1}}}$
satisfies the following inequality
\begin{eqnarray*}\label{complexity-eigen-optimal-marking-neq1}
\sum_{T \in \mathcal{R}
  } \eta^2_{h_k}(E_{h_k} U, T) \geq \hat{\theta} \sum_{T \in \mathcal{T}_{h_k}}
  \eta^2_{h_k}(E_{h_k} U, T)
\end{eqnarray*}
with $\hat{\theta} = \frac{\tilde{C}_2^2(1-2\tilde{\beta}_{\ast}^2)}{\tilde{C}_0 ( \tilde{C}_1^2 + (1 +  2 C_{\ast}^2 \tilde{C}_1^2)
\tilde{\gamma}_{\ast})}$ and $\tilde{C}_0 = \max(1, \frac{\tilde{C}_3^2}{\tilde{\gamma}_{\ast}})$,  where $\tilde{\beta_{\ast}}$ and
$\tilde{\gamma_{\ast}}$ are defined in (\ref{complexity-eigen-boundary-beta-gamma})  with $\delta_2$ being chosen such that $\tilde{\beta_{\ast}}^2\in
(0,\frac{1}{2})$.
\end{corollary}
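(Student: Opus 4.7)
The plan is to reduce this corollary to the already-established optimal marking result for source problems (Proposition \ref{complexity-boundary-optimal-marking}) via the auxiliary boundary-value problem (\ref{auxiliary-boundary-eq}). The bridge between the two worlds is the identity $E_{h_k} u_l = R_{h_k} w^{h_k, l}$ from (\ref{u-w}), which implies pointwise on every $T\in\mathcal{T}_{h_k}$ that $\eta_{h_k}(E_{h_k} u_l, T) = \tilde\eta_{h_k}(R_{h_k} w^{h_k, l}, T)$, and hence $\eta_{h_k}^2(E_{h_k} U, T) = \tilde\eta_{h_k}^2(R_{h_k} W^{h_k}, T)$ after summing over $l=1,\dots,q$. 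So once Dörfler-type dominance is proved for the boundary-value estimators on $\mathcal{R}$, it transfers verbatim to the eigenvalue estimators.

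First, I would invoke Lemma \ref{complexity-eigen-boundary}. The hypothesis of the corollary is exactly its hypothesis, so the lemma produces the analogous decrease for the boundary-value pair $\{R_{h_k}W^{h_k}, R_{h_{k+1}}W^{h_k}\}$,
\begin{eqnarray*}
\|W^{h_k}-R_{h_{k+1}}W^{h_k}\|_{a,\Omega}^2 + \tilde\gamma_\ast\,\widetilde{osc}^2_{h_{k+1}}(R_{h_{k+1}}W^{h_k},\Omega)
\leq \tilde\beta_\ast^2\big(\|W^{h_k}-R_{h_k}W^{h_k}\|_{a,\Omega}^2 + \tilde\gamma_\ast\,\widetilde{osc}^2_{h_k}(R_{h_k}W^{h_k},\Omega)\big),
\end{eqnarray*}
with $(\tilde\beta_\ast, \tilde\gamma_\ast)$ given by (\ref{complexity-eigen-boundary-beta-gamma}). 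Since $\beta_\ast^2 < 1/2$ strictly, by choosing the free parameter $\delta_1$ small and then taking $h_0 \ll 1$ (so $\tilde r(h_0)$ is small), the closed form for $\tilde\beta_\ast$ keeps $\tilde\beta_\ast^2$ in $(0, 1/2)$.

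Second, I would apply Proposition \ref{complexity-boundary-optimal-marking} to the vector source problem (\ref{auxiliary-boundary-eq}), whose $l$-th right-hand side is $\sum_{i=1}^q \alpha_{h_k,i}(u_l)\lambda_{h_k,i}u_{h_k,i}$. The decrease property from Step 1 is exactly the hypothesis of that proposition (with $\tilde\beta_\ast^2\in(0,1/2)$ playing the role of $\tilde\xi_0^2$), so the set $\mathcal{R}=\mathcal{R}_{\mathcal{T}_{h_k}\to\mathcal{T}_{h_{k+1}}}$ automatically satisfies
\begin{eqnarray*}
\sum_{T\in\mathcal{R}} \tilde\eta^2_{h_k}(R_{h_k}W^{h_k}, T)\;\geq\;\hat\theta \sum_{T\in\mathcal{T}_{h_k}} \tilde\eta^2_{h_k}(R_{h_k}W^{h_k}, T),
\end{eqnarray*}
with $\hat\theta = \tilde C_2^2(1-2\tilde\beta_\ast^2)/[\tilde C_0(\tilde C_1^2+(1+2C_\ast^2\tilde C_1^2)\tilde\gamma_\ast)]$ as in the statement. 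Applying the pointwise identity from the first paragraph to every $T\in\mathcal{T}_{h_k}$ and to both sides yields the claimed inequality
$\sum_{T\in\mathcal{R}} \eta^2_{h_k}(E_{h_k}U,T)\geq \hat\theta\sum_{T\in\mathcal{T}_{h_k}} \eta^2_{h_k}(E_{h_k}U,T)$.

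The main obstacle is the quantitative book-keeping at the transition from eigenvalue constants to boundary-value constants: one must verify that the combination of the free parameter (called $\delta_2$ in the corollary statement, $\delta_1$ in Lemma \ref{complexity-eigen-boundary}) and the requirement $h_0\ll1$ can indeed be made simultaneously so that $\tilde\beta_\ast^2<1/2$. Everything else — the Dörfler inequality for the source problem and the pointwise estimator identity — is already packaged in Proposition \ref{complexity-boundary-optimal-marking} and equation (\ref{u-w}), so after this quantitative check the corollary follows without further analytic work.
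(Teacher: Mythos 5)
Your proposal is correct and follows essentially the route the paper itself indicates: the paper states that this corollary "is a direct consequence of $E_{h_k} u_l=R_{h_k}w^{h_k, l}$, Proposition \ref{complexity-boundary-optimal-marking} and Lemma \ref{complexity-eigen-boundary}," and your three steps (invoke Lemma \ref{complexity-eigen-boundary} to get the source-problem decrease property, apply Proposition \ref{complexity-boundary-optimal-marking} to the vector source problem (\ref{auxiliary-boundary-eq}), then transfer back via the estimator identity coming from $E_{h_k}U = R_{h_k}W^{h_k}$) are precisely this direct consequence, including the correct identification $\tilde\xi_0 \mapsto \tilde\beta_\ast$, $\tilde\gamma_0 \mapsto \tilde\gamma_\ast$ giving the stated $\hat\theta$. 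One caveat worth flagging, which you inherit from the paper rather than introduce: the asserted pointwise equality $\eta_{h_k}(E_{h_k}u_l,T)=\tilde\eta_{h_k}(R_{h_k}w^{h_k,l},T)$ holds exactly for the jump terms (since $\nabla E_{h_k}u_l=\nabla R_{h_k}w^{h_k,l}$), but the element residuals differ by $\sum_i\alpha_{h_k,i}(u_l)(\lambda^{h_k,l}-\lambda_{h_k,i})u_{h_k,i}$, a quantity of order $\delta_{h_k}(\lambda)^2$ that is not identically zero when the discrete cluster eigenvalues are not equal; the paper tacitly treats this as an exact identity (e.g., "since $\tilde\eta_h(R_hw^h,\Omega)=\eta_h(E_hu,\Omega)$"), and your proof is consistent with that usage.
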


To analyze the complexity of {\bf Algorithm 3.1}, we need more requirements than for the convergence rate.

\begin{assumption}\label{assump-5.1}
\begin{enumerate}
\item  The marking parameter $\theta$ satisfy $\theta \in (0, \theta_{\ast})$, with
\begin{eqnarray*}
\theta_{\ast} = \frac{1}{q^2} \frac{C_2^2 \gamma}{C_3^2 ( C_1^2 + (1 + 2C_{\ast}^2 C_1^2)\gamma )}). 
\end{eqnarray*}
\item The marked $\mathcal{M}_k$ satisfy (\ref{dorfler-prop}) with minimal cardinality.
\item The distribution of refinement edges on $\mathcal{T}_0$ satisfies condition (b) of section 4
in \cite{stevenson08}.
\end{enumerate}
\end{assumption}
\begin{lemma}\label{complexity-upper-bound-of-dof} Let $\lambda \in \mathbb{R}$  be some eigenvalue of (\ref{eigen}) with multiplicity $q$,
$\{u_l\}_{l=1}^q$ be an orthonormal basis of $M(\lambda)$,  $u_l \in \mathcal{A}^s (l=1, \cdots, q )$ and $\mathcal{T}_{h_k}$ be a conforming partition
obtained from $\mathcal{T}_{h_0}$.   Let $\mathcal{T}_{h_{k+1}}$ be a mesh created from $\mathcal{T}_{h_k}$ upon making the set $\mathcal{M}_{h_{k}}$
which satisfies D\"{o}rfler property (\ref{dorfler-prop})  with $\theta\in (0, \frac{1}{q^2} \frac{C_2^2 \gamma}{C_3^2 ( C_1^2 + (1 + 2C_{\ast}^2 C_1^2)\gamma )})$(that is, 1 and 2 of Assumption \ref{assump-5.1} are satisfied).
Let $\{(\lambda_{h_k, l}, u_{h_k, l}), l=1,\cdots,q\}_{k\in \mathbb{N}_0}$ be discrete
 solutions of (\ref{fe-eigen}) over a
conforming mesh $\mathcal{T}_{h_k}$ and $M_{h_{k}}(\lambda) = span \{u_{h_k, 1}, \cdots, u_{h_k, q}\}$.
  Then
\begin{eqnarray}\label{complexity-optimal-neq1}
 \# \mathcal{M}_{h_{k}}   \leq  C
  \Big(  \|U - E_{h_k} U\|_{a,\Omega}^2 + \gamma osc^2_{h_{k}}(E_{h_{k}} U, \Omega) \Big)^{-1/2s}
   \Big(q^{\frac{1}{2s} -1} \sum_{l=1}^{q} |u_l|_{s}^{1/s}\Big),
\end{eqnarray}
where the constant $C$ depends on the discrepancy between $\theta$ and $\frac{C_2^2 \gamma}{C_3^2 ( C_1^2 + (1 + 2C_{\ast}^2 C_1^2)\gamma )}$.
\end{lemma}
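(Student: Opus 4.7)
The plan is to adapt the classical optimal-cardinality argument of Casc\'{o}n-Kreuzer-Nochetto-Siebert to the present eigenvalue setting, routing through the surrogate indicators $\eta_{h_k}(E_{h_k}U,\cdot)$ so that the boundary-value-problem machinery of Sections 3 and 5 applies. Exploiting the approximation-class membership $u_l\in\mathcal{A}^s$, for each $l$ and a per-component tolerance $\mu_l>0$ to be fixed later, there exists a conforming refinement $\mathcal{T}_{\mu,l}$ of $\mathcal{T}_{h_0}$ and some $v_l\in S^{\mathcal{T}_{\mu,l}}_0(\Omega)$ satisfying $\|u_l-v_l\|_{a,\Omega}^2 + (\gamma+1)\,osc^2_{\mathcal{T}_{\mu,l}}(v_l,\mathcal{T}_{\mu,l})\leq \mu_l^2$ and $\#\mathcal{T}_{\mu,l}-\#\mathcal{T}_{h_0}\leq \mu_l^{-1/s}|u_l|_s^{1/s}$. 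Taking the overlay $\mathcal{T}_\mu:=\bigoplus_{l=1}^q \mathcal{T}_{\mu,l}$, the standard overlay inequality yields $\#\mathcal{T}_\mu-\#\mathcal{T}_{h_0}\leq \sum_l \mu_l^{-1/s}|u_l|_s^{1/s}$, and on this common refinement, combining quasi-best Galerkin eigenspace approximation with Lemma 3.3 and the monotonicity of oscillation under refinement produces $\|U-E_{\mathcal{T}_\mu}U\|_{a,\Omega}^2+\gamma\,osc^2_{\mathcal{T}_\mu}(E_{\mathcal{T}_\mu}U,\Omega)\lc \sum_{l=1}^q\mu_l^2$.

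Second, I would balance the $\mu_l$ so that $\sum_l\mu_l^2$ is a small fixed multiple of $\mathcal{E}_{h_k}:=\|U-E_{h_k}U\|_{a,\Omega}^2+\gamma\,osc^2_{h_k}(E_{h_k}U,\Omega)$. Solving $\min_{\mu_l}\sum_l\mu_l^{-1/s}|u_l|_s^{1/s}$ under this constraint via Lagrange multipliers, followed by the power-mean inequality, then gives
\[
\sum_l\mu_l^{-1/s}|u_l|_s^{1/s}\,\lc\, \mathcal{E}_{h_k}^{-1/(2s)}\cdot q^{1/(2s)-1}\sum_{l=1}^q|u_l|_s^{1/s}.
\]
Setting $\mathcal{T}_+:=\mathcal{T}_\mu\oplus\mathcal{T}_{h_k}$, monotonicity under further refinement transfers the estimate of Step~1 to $\mathcal{T}_+$, so the energy-decrease hypothesis of Corollary 5.1 holds with some $\beta_*^2\in(0,1/2)$; the corollary produces a D\"{o}rfler-type property on $\mathcal{R}_{\mathcal{T}_{h_k}\to\mathcal{T}_+}$ relative to $\eta_{h_k}(E_{h_k}U,\cdot)$ at some parameter $\hat\theta$. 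Lemma 3.4 then transfers this inequality to the physical indicator $\eta_{h_k}(U_{h_k},\cdot)$ at some parameter $\theta'$, and the upper bound $\theta_*$ in Assumption 5.1(1) has been calibrated precisely to ensure $\theta<\theta'$. Minimality of $\mathcal{M}_{h_k}$ (Assumption 5.1(2)) therefore yields $\#\mathcal{M}_{h_k}\leq \#\mathcal{R}_{\mathcal{T}_{h_k}\to\mathcal{T}_+}\leq \#\mathcal{T}_\mu-\#\mathcal{T}_{h_0}$, and substituting the balanced tolerances produces the claimed bound.

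The main obstacle will be the constant bookkeeping between the surrogate indicator $\eta_{h_k}(E_{h_k}U,\cdot)$ and the physical indicator $\eta_{h_k}(U_{h_k},\cdot)$: one must verify carefully that the explicit upper bound $\theta_*$ appearing in Assumption 5.1(1) simultaneously accommodates the D\"{o}rfler constant $\hat\theta$ delivered by Corollary 5.1 (which itself depends on the choice $\beta_*^2<1/2$) and the $q^{-2}$ loss incurred when transferring between the two indicator systems through Lemma 3.4. A companion subtle point is controlling the perturbation $r(h_0)$ hidden in Theorem 3.1 and Lemma 4.1, ensuring that the $\lc$ in Step 1 absorbs it without spoiling the constants; this is where the assumption $h_0\ll 1$ and the explicit form of $\theta_*$ in Assumption 5.1(1) enter crucially.
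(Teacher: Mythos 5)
Your proposal follows the same backbone as the paper's own proof: build an $\varepsilon$-level auxiliary mesh via $\mathcal A^s$-membership, take the (smallest) common refinement with $\mathcal T_{h_k}$, establish an energy decrease with a small enough $\check\beta^2<\tfrac12$, invoke Corollary \ref{complexity-eigen-optimal-marking} to get a D\"orfler inequality on the surrogate indicators $\eta_{h_k}(E_{h_k}U,\cdot)$, transfer it to the physical indicators via Lemma \ref{mark-etah-eta}, verify the resulting marking fraction still exceeds $\theta$ using the explicit form of $\theta_\ast$ in Assumption \ref{assump-5.1}, and finally use minimality of $\mathcal M_{h_k}$. The one place you genuinely diverge from the paper, and where your version is actually more careful, is the approximation-class step: you construct per-component meshes $\mathcal T_{\mu,l}$ and control the common refinement by the overlay inequality $\#\mathcal T_\mu-\#\mathcal T_{h_0}\le\sum_l(\#\mathcal T_{\mu,l}-\#\mathcal T_{h_0})$, whereas the paper posits a single minimal mesh $\mathcal T_{h_\varepsilon}$ satisfying the per-$l$ tolerance $\varepsilon^2/q$ for all $l$ simultaneously and then bounds its cardinality as if it were bounded by a single $|u_l|_s^{1/s}$ — a step that, read literally, is not justified. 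Your overlay route is the standard rigorous patch.

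One caveat on the constants: the Lagrange balancing in your Step 2 does not actually produce the factor $q^{1/(2s)-1}$. Optimizing $\sum_l\mu_l^{-1/s}|u_l|_s^{1/s}$ subject to $\sum_l\mu_l^2\lesssim\mathcal E_{h_k}$ and then applying the power-mean inequality yields $\mathcal E_{h_k}^{-1/(2s)}\,q^{1/(2s)}\sum_l|u_l|_s^{1/s}$ — one extra power of $q$. The equal-$\mu_l$ choice gives the same $q^{1/(2s)}$. The exponent $q^{1/(2s)-1}$ written in (\ref{complexity-optimal-neq1}) corresponds to replacing $\min_l|u_l|_s^{1/s}$ by the average, which only makes sense if a single mesh whose cardinality is controlled by $\min_l$ works simultaneously for all components; the overlay argument cannot deliver that. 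This is a shared imprecision with the paper's own derivation (the statement's $q$-dependence is probably $q^{1/(2s)}$, not $q^{1/(2s)-1}$), and it is harmless for the downstream complexity result where the $q$-dependent factor is absorbed into the hidden constant — but you should not claim that Lagrange multipliers produce $q^{1/(2s)-1}$.
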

\begin{proof}
Let $\beta, \beta_1 \in (0,1)$ satisfy $\beta_1\in (0,\beta) $ and
 $$\theta < \frac{1}{q^2}\frac{C_2^2 \gamma}{C_3^2 (
C_1^2 + (1 + 2C_{\ast}^2 C_1^2)\gamma )}(1-\beta^2).$$
Choose
$$\varepsilon = \frac{1}{\sqrt{2}} \beta_1 \Big(\|U-E_{h_k} U\|_a^2 +
\gamma osc^2_{h_{k}}(E_{h_{k}} U, \Omega)\Big) ^{1/2}$$
and let $\mathcal{T}_{h_{\varepsilon}}$ be a refinement of
 $\mathcal{T}_{h_0}$ with minimal degrees of freedom satisfying
 \begin{eqnarray*}
 \|u_l - E_{h_{\varepsilon}} u_l\|_{a,\Omega}^2 + (\gamma + 1) osc^2_{h_{\varepsilon}}(E_{h_{\varepsilon}} u_l, \Omega) \leq
 \frac{\varepsilon^2}{q}, ~~l=1, \cdots, q,
 \end{eqnarray*}
 which means
\begin{eqnarray}\label{complexity-optimal-neq0}
  \|U - E_{h_{\varepsilon}} U\|_{a,\Omega}^2 + (\gamma + 1) osc^2_{h_{\varepsilon}}(E_{h_{\varepsilon}} U, \Omega) \leq
 \varepsilon^2.
 \end{eqnarray}

We get from the definition of $\mathcal{A}^s$ that
  \begin{eqnarray*}
  ~~&&\#\mathcal{T}_{h_{\varepsilon}} - \# \mathcal{T}_{h_0} \nonumber\\
  &\leq& ( \frac{1}{\sqrt{2}}\beta_1)^{-1/s}
 \Big(\|U - E_{h_k} U\|_{a,\Omega}^2 + \gamma osc^2_{h_{k}}(E_{h_{k}} U, \Omega)\Big)^{-1/2s} q^{1/2s} |u_l|_{s}^{1/s}, ~~l=1, \cdots, q,
 \end{eqnarray*}
 which implies
  \begin{eqnarray*}\label{upper-bound-dof-neq1}
  ~~&&\#\mathcal{T}_{h_{\varepsilon}} - \# \mathcal{T}_{h_0} \nonumber\\
  &\leq& ( \frac{1}{\sqrt{2}}\beta_1)^{-1/s}
   \Big(\|U - E_{h_k} U\|_{a,\Omega}^2 + \gamma osc^2_{h_{k}}(E_{h_{k}} U, \Omega)\Big)^{-1/2s}
\Big(q^{\frac{1}{2s} -1} \sum_{l=1}^q |u_l|_{s}^{1/s}\Big).
 \end{eqnarray*}
 Let $\mathcal{T}_{h_{k,+}}$ be the smallest common refinement of
 $\mathcal{T}_{h_k}$ and $\mathcal{T}_{h_{\varepsilon}}$. Note that both $\mathcal{T}_{h_k}$ and
 $\mathcal{T}_{h_{\varepsilon}}$ are refinements of $\mathcal{T}_{h_0}$,
 we have that the number of elements in $\mathcal{T}_{h_{k,+}}$ that are not in $\mathcal{T}_{h_k}$
is less than the number of elements that must be added to go from $\mathcal{T}_{h_0}$ to $\mathcal{T}_{h_{\varepsilon}}$, namely,
\begin{eqnarray*}
 \#\mathcal{T}_{h_{k,+}} - \#\mathcal{T}_{h_k} \leq
 \#\mathcal{T}_{h_{\varepsilon}} - \# \mathcal{T}_{h_0}.
\end{eqnarray*}

Let $w^{h_\varepsilon,l} = \sum_{i=1}^{q} \alpha_{h_\varepsilon,i}(u_l) \lambda_{h_\varepsilon, i}  K u_{h_\varepsilon, i} = K\big(\sum_{i=1}^{q} \alpha_{h_\varepsilon,i}(u_l) \lambda_{h_\varepsilon, i} u_{h_\varepsilon, i}\big)$ $(l=1, \cdots q)$, namely
\begin{eqnarray*}\label{complexity-boundary-problem-2}
 L w^{h_\varepsilon,l} = \sum_{i=1}^{q} \alpha_{h_\varepsilon,i}(u_l) \lambda_{h_\varepsilon, i} u_{h_\varepsilon, i}.
\end{eqnarray*}
We obtain from Lemma \ref{lemma-osc-L} and  the Young inequality that
\begin{eqnarray*}
 \widetilde{osc}^2_{h_{k,+}}(R_{h_{k,+}} W^{h_{\varepsilon}}, \Omega)
  &\leq&
  2 \widetilde{osc}^2_{h_{\varepsilon}}(R_{h_{\varepsilon}}W^{h_{\varepsilon}},\Omega)
   + 2 C_{\ast}^2 \|R_{h_{k,
  +}}W^{h_{\varepsilon}} -
  R_{h_{\varepsilon}}W^{h_{\varepsilon}}\|_{a, \Omega}^2.
\end{eqnarray*}

Note that  $\mathcal{T}_{h_{k,+}}$  is a
 refinement of $\mathcal{T}_{h_{\varepsilon}}$,
 $L^2$-projection error are monotone and the following orthogonality
\begin{eqnarray*}\label{ortho-relation}
\|W^{h_\varepsilon}-R_{h_{k,
  +}}W^{h_\varepsilon}\|_{a, \Omega}^2
   = \|W^{h_\varepsilon} - R_{h_{\varepsilon}}W^{h_\varepsilon}\|_{a, \Omega}^2
  - \|R_{h_{k,
  +}}W^{h_\varepsilon} -
  R_{h_{\varepsilon}}W^{h_\varepsilon}\|_{a, \Omega}^2
\end{eqnarray*}
is valid, we arrive at
\begin{eqnarray*}
  &&\| W^{h_\varepsilon} - R_{h_{k, +}}
  W^{h_\varepsilon}\|_{a,\Omega}^2 + \frac{1}{2 C_{\ast}^2} \widetilde{osc}^2_{h_{k,+}}(R_{h_{k,+}} W^{h_{\varepsilon}}, \Omega)\nonumber\\
  &\leq & \|W^{h_\varepsilon} - R_{h_{\varepsilon}}
  W^{h_\varepsilon}\|_{a,\Omega}^2+ \frac{1}{ C_{\ast}^2}
  osc^2_{h_{\varepsilon}}(R_{h_{\varepsilon}}W^{h_{\varepsilon}},\Omega).
 \end{eqnarray*}
Since  (\ref{gamma-boundary}) implies $\tilde{\gamma} \leq \frac{1}{2 C_{\ast}^2}$,   we obtain that $\sigma \equiv \frac{1}{C_{\ast}^2} - \tilde{\gamma} \in (0, 1) $ and
\begin{eqnarray*}
  &&\| W^{h_\varepsilon} - R_{h_{k, +}}
  W^{h_\varepsilon}\|_{a,\Omega}^2 + \tilde{\gamma} \widetilde{osc}^2_{h_{k,+}}(R_{h_{k,+}} W^{h_{\varepsilon}}, \Omega)\nonumber\\
  &\leq &  \|W^{h_\varepsilon} - R_{h_{\varepsilon}} W^{h_\varepsilon}\|_{a,\Omega}^2
   + \frac{1}{ C_{\ast}^2}
   osc^2_{h_{\varepsilon}}(R_{h_{\varepsilon}}W^{h_{\varepsilon}},\Omega)\nonumber\\
  &\leq & \|W^{h_\varepsilon} - R_{h_{\varepsilon}}
  W^{h_\varepsilon}\|_{a,\Omega}^2+ (\tilde{\gamma} + \sigma)
  osc^2_{h_{\varepsilon}}(R_{h_{\varepsilon}}W^{h_{\varepsilon}},\Omega).
 \end{eqnarray*}
Applying the similar argument in the proof of Theorem \ref{error-reduction}, we may conclude that
 \begin{eqnarray}\label{complexity-optimal-neq2}
 && \|U - E_{h_{k, +}} U\|_{a,\Omega}^2 + \gamma osc^2_{h_{k,+}}(E_{h_{k, +}} U,
 \Omega)\nonumber\\
 & \leq& \beta_0^2  \Big(
 \|U - E_{h_{\varepsilon}} U\|_{a,\Omega}^2 + (\gamma + \sigma) osc^2_{h_{\varepsilon}}(E_{h_{\varepsilon}} U,
 \Omega)\Big) \nonumber\\
&\leq &  \beta_0^2 \Big(\|U - E_{h_{\varepsilon}} U\|_{a,\Omega}^2
     + (\gamma + 1)
osc^2_{h_{\varepsilon}}(E_{h_{\varepsilon}} U,
 \Omega)\Big),
 \end{eqnarray}
 where
 \begin{eqnarray*}
\beta_0 = \left(\frac{ 1+ \delta_1+\hat{C}_2\tilde{\gamma}^2(h_0)}{1 -\hat{C}_2\tilde{\gamma}^2(h_0)}\right)^{1/2},
\end{eqnarray*}
and $\delta_1$ is the constant appearing in the proof of Theorem \ref{error-reduction}. Combining (\ref{complexity-optimal-neq0}) and
(\ref{complexity-optimal-neq2}), we then arrive at
\begin{eqnarray*}
\|U - E_{h_{k, +}} U\|_{a,\Omega}^2 + \gamma osc^2_{h_{k,+}}(E_{h_{k, +}} U,
 \Omega) \leq \check{\beta}^2  \Big(\|U-E_{h_k} U\|_a^2 + \gamma osc^2_{h_{k}}(E_{h_{k}} U,
 \Omega)\Big)
\end{eqnarray*}
with  $\check{\beta} = \frac{1}{\sqrt{2}} \beta_0 \beta_1$.

 Let $\delta_1\in (0,1)$ be some constant satisfying
\begin{eqnarray}\label{thm-complexity-delta-cond-1}
(1+\delta_1)^2 \beta_1^2 \leq \beta^2,
\end{eqnarray}
which implies
\begin{eqnarray}\label{thm-complexity-delta-cond-3}
(1+\delta_1) \beta_1^2 <1.
\end{eqnarray}
We see from $h_0 \ll1 $ and (\ref{thm-complexity-delta-cond-3})
 that $\check{\beta}^2\in (0,\frac{1}{2})$. Thus we get from  Corollary
\ref{complexity-eigen-optimal-marking}  that
$\mathcal{T}_{h_{k, +}}$ satisfies 
\begin{eqnarray*}
  \sum_{T \in \mathcal{R}_{\mathcal{T}_{h_k}\rightarrow \mathcal{T}_{h_{k, +}}}
  } \eta^2_{h_k}(E_{h_k} U, T) \geq \check{\theta}  \sum_{T \in \mathcal{T}_{h_k}}
  \eta^2_{h_k}(E_{h_k} U, T),
\end{eqnarray*}
where $\check{\theta} = \frac{\tilde{C}_2^2(1-2\hat{\beta}^2)}{\tilde{C}_0 ( \tilde{C}_1^2 + (1 + 2 C_{\ast}^2 \tilde{C}_1^2) \hat{\gamma})}, \quad
\hat{\gamma}= \frac{\gamma}{1 -\hat{C}_3\tilde{\gamma}^2(h_0)}$, $\tilde{C}_0 = max(1, \frac{\tilde{C}_3^2}{\hat{r}})$ and
\begin{eqnarray*}
 \hat{\beta}= \left(\frac{\check{\beta}^2( 1+ \delta_1)+\hat{C}_3\tilde{\gamma}^2(h_0)}{1 -\hat{C}_3\tilde{\gamma}^2(h_0)}\right)^{1/2}.
 \end{eqnarray*}
 From the
 definition of $\gamma$ (see (\ref{gamma})) and $\tilde{\gamma}$ (see (\ref{gamma-boundary})), we
obtain that $\hat{\gamma}<1$. On the other hand, we have $\tilde{C}_3>1$ and hence ${\tilde C}_0 = \frac{\tilde{C}_3^2}{\hat{\gamma}}.$  Consequently, we
can write $\check{\theta}$ as $\check{\theta} = \frac{\tilde{C}_2^2(1-2\hat{\beta}^2)}{\tilde{C}_3^2 ( \frac{\tilde{C}_1^2}{\hat{\gamma}} + (1 + 2
C_{\ast}^2 \tilde{C}_1^2) )}.$

We then obtain from Lemma \ref{mark-etah-eta} that there exists a constant $\check{\theta}' \in (0, 1)$, such that
\begin{eqnarray}
  \sum_{T \in \mathcal{R}_{\mathcal{T}_{h_k}\rightarrow \mathcal{T}_{h_{k, +}}}
  } \eta^2_{h_k}(U_{h_k}, T) \geq \check{\theta}'  \sum_{T \in \mathcal{T}_{h_k}}
  \eta^2_{h_k}(U_{h_k}, T),
\end{eqnarray}
 where $\check{\theta}' = \frac{1 - C \rho_{\Omega}(h_0) \delta_{h_0}(\lambda)}{q^2} \check{\theta}$.

 Since $h_0 \ll 1$,
  we obtain that  $\hat{\gamma}>\gamma$ and $\hat{\beta}\in
  (0,\frac{1}{\sqrt{2}}\beta)$ from (\ref{thm-complexity-delta-cond-1}).
Using (\ref{coef-eigen-bound}), we get that
\begin{eqnarray*}
 \check{\theta}'&=&\frac{1 - C \rho_{\Omega}(h_0) \delta_{h_0}(\lambda)}{q^2} \frac{\tilde{C}_2^2(1-2\hat{\beta}^2)}{\tilde{C}_3^2 (
\frac{\tilde{C}_1^2}{\hat{\gamma}} + (1 + 2 C_{\ast}^2 \tilde{C}_1^2) )} \nonumber\\
& \geq &\frac{1 - C \rho_{\Omega}(h_0) \delta_{h_0}(\lambda)}{q^2} \frac{\tilde{C}_2^2}{\tilde{C}_3^2 (
\frac{\tilde{C}_1^2}{\hat{\gamma}} + (1 + 2 C_{\ast}^2 \tilde{C}_1^2)
)}(1-\beta^2)\nonumber\\
&=& \frac{1 - C \rho_{\Omega}(h_0) \delta_{h_0}(\lambda)}{q^2} \frac{\frac{C_2^2}{(1 - \tilde{C} \tilde{r}(h_0))^2} }{\frac{C_3^2}{(1 - \tilde{C} \tilde{r}(h_0))^2}  \big( \frac{ C_1^2}{(1 + \tilde{C}
\tilde{r}(h_0))^2\hat{\gamma}} + (1 + 2 C_{\ast}^2 \frac{C_1^2}{(1+ \tilde{C}
\tilde{r}(h_0))^2 } )\big)}(1-\beta^2),
\end{eqnarray*}
which together with the fact that $h_0 \ll 1$ and $\hat{\gamma}>\gamma$ yields
\begin{eqnarray*}
 \check{\theta}
\geq \frac{1 }{q^2} \frac{C_2^2 }{C_3^2 ( \frac{C_1^2}{\gamma} + (1 + 2 C_{\ast}^2 C_1^2) )}(1-\beta^2) =\frac{1}{q^2} \frac{C_2^2 \gamma }{C_3^2 ( C_1^2 + (1 + 2 C_{\ast}^2
C_1^2)\gamma )}(1-\beta^2)
> \theta.
\end{eqnarray*}
Therefore,
\begin{eqnarray}
  \sum_{T \in \mathcal{R}_{\mathcal{T}_{h_k}\rightarrow \mathcal{T}_{h_{k, +}}}
  } \eta^2_{h_k}(U_{h_k}, T) \geq \theta  \sum_{T \in \mathcal{T}_{h_k}}
  \eta^2_{h_k}(U_{h_k}, T).
\end{eqnarray}
Since $\mathcal{M}_{h_k}$ satisfies (\ref{dorfler-prop}) with minimal cardinality,   we have
\begin{eqnarray*}
 \#\mathcal{M}_{h_{k}}  &\leq& \# \mathcal{R}_{\mathcal{T}_{h_k}\rightarrow \mathcal{T}_{h_{k, +}}} \leq
 \#\mathcal{T}_{h_{k, +}} - \#\mathcal{T}_{h_k}
  \leq \#\mathcal{T}_{h_{\varepsilon}} - \#\mathcal{T}_{h_0}\nonumber\\
 &\leq& (\frac{1}{\sqrt{2}}\beta_1)^{-1/s}
  \Big(\|U - E_{h_k} U\|_{a,\Omega}^2 + \gamma osc^2_{h_{k}}(E_{h_{k}} U,
 \Omega)\Big)^{-1/2s}
 \Big(q^{\frac{1}{2s} -1} \sum_{l=1}^q |u_l|_{s}^{1/s}\Big).
\end{eqnarray*}
This is the desired estimate (\ref{complexity-optimal-neq1}) with an explicit dependence on the discrepancy between $\theta$ and $\frac{1}{q^2}\frac{C_2^2
\gamma}{C_3^2 ( C_1^2 + (1 + 2 C_{\ast}^2 C_1^2)\gamma )}$ via $\beta_1$. This completes the proof.
\end{proof}

We are now ready to show that {\bf Algorithm \ref{algorithm-AFEM-eigen}} possesses quasi-optimal complexity.
\begin{theorem}\label{thm-optimal-complexity}
Let $\lambda \in \mathbb{R}$  be some eigenvalue of (\ref{eigen}) with multiplicity $q$, $\{u_l\}_{l=1}^q$ be an orthonormal basis of $M(\lambda)$,
and $u_l \in \mathcal{A}^s (l=1, \cdots, q)$. Let $\{(\lambda_{h_k, l}, u_{h_k, l}), l=1,\cdots,q\}_{k\in \mathbb{N}_0}$ be a sequence of finite element
 solutions
 produced by {\bf Algorithm \ref{algorithm-AFEM-eigen}} of
 Section \ref{adaptive-algorithm} and $M_{h_{k}}(\lambda) = span \{u_{h_k, 1}, \cdots, u_{h_k, q}\}$.
 If  Assumption \ref{assump-5.1}  are satisfied for {\bf Algorithm \ref{algorithm-AFEM-eigen}}, then the $n$-th iterate solution space $M_{h_n}(\lambda)$ of
 {\bf Algorithm \ref{algorithm-AFEM-eigen}} satisfies the quasi-optimal bound
 \begin{eqnarray*}
  \|U-E_{h_n} U\|_{a,\Omega}^2 + \gamma osc^2_{h_n}(E_{h_n} U, \Omega) \lc (\#\mathcal{T}_{h_n}
  -\#\mathcal{T}_{h_0})^{-2s},\\
 \lambda^{h_n,l}-\lambda\lc (\#\mathcal{T}_{h_n}
  -\#\mathcal{T}_{h_0})^{-2s},
 \end{eqnarray*}
 where the hidden constant depends on the exact solution $(\lambda, M(\lambda))$ and
 the discrepancy between
 $\theta$ and $\frac{C_2^2 \gamma}{C_3^2 ( C_1^2 + (1 + 2 C_{\ast}^2 C_1^2)\gamma
)}$.
\end{theorem}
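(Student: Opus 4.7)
The strategy is the standard Stevenson/Cascón--Kreuzer--Nochetto--Siebert recipe, now routed through the eigenspace machinery already developed in this paper: combine the per-step cardinality bound of Lemma \ref{complexity-upper-bound-of-dof} with the geometric contraction of Theorem \ref{error-reduction} and the refinement count of Lemma \ref{complexity-refine}. Throughout the argument I write $e_k := \|U-E_{h_k}U\|_{a,\Omega}^2 + \gamma\,osc^2_{h_k}(E_{h_k}U,\Omega)$ for brevity.

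First, I would verify that $e_k$ and $E_k := \|U-E_{h_k}U\|_{a,\Omega}^2 + \gamma\,\eta^2_{h_k}(E_{h_k}U,\Omega)$ are equivalent: the trivial bound $osc\le\eta$ gives $e_k\le E_k$, while the lower a posteriori bound \eqref{lower-bound} (applied componentwise) yields $\eta^2_{h_k}(E_{h_k}U,\Omega)\lc \|U-E_{h_k}U\|_{a,\Omega}^2+osc^2_{h_k}(E_{h_k}U,\Omega)$, so $E_k\lc e_k$ with a constant depending only on the data and $\gamma$. Using this equivalence, Theorem \ref{error-reduction} is converted into a geometric contraction for $e_k$: for every $0\le k\le n$,
\begin{equation*}
e_n \;\lc\; \alpha^{2(n-k)}\,e_k,\qquad\text{equivalently}\qquad e_k^{-1/(2s)} \;\lc\; \alpha^{(n-k)/s}\,e_n^{-1/(2s)}.
\end{equation*}

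Next, since Assumption \ref{assump-5.1} holds (so in particular $\theta<\theta_\ast$ and $\mathcal{M}_{h_k}$ is chosen with minimal cardinality), Lemma \ref{complexity-upper-bound-of-dof} applies at every iterate $k$ and gives
\begin{equation*}
\#\mathcal{M}_{h_k} \;\lc\; e_k^{-1/(2s)}\,\Big(q^{1/(2s)-1}\sum_{l=1}^{q}|u_l|_s^{1/s}\Big).
\end{equation*}
Substituting the previous geometric bound for $e_k^{-1/(2s)}$, I obtain
\begin{equation*}
\#\mathcal{M}_{h_k} \;\lc\; \alpha^{(n-k)/s}\,e_n^{-1/(2s)}\,\Big(q^{1/(2s)-1}\sum_{l=1}^{q}|u_l|_s^{1/s}\Big).
\end{equation*}
Summing over $k=0,1,\ldots,n-1$ collapses to a convergent geometric series (since $\alpha\in(0,1)$), so together with Lemma \ref{complexity-refine} I get
\begin{equation*}
\#\mathcal{T}_{h_n}-\#\mathcal{T}_{h_0} \;\lc\; \sum_{k=0}^{n-1}\#\mathcal{M}_{h_k} \;\lc\; \frac{\alpha^{1/s}}{1-\alpha^{1/s}}\,e_n^{-1/(2s)}\,\Big(q^{1/(2s)-1}\sum_{l=1}^{q}|u_l|_s^{1/s}\Big).
\end{equation*}
Raising both sides to the $-2s$ power and rearranging yields the claimed bound $e_n\lc (\#\mathcal{T}_{h_n}-\#\mathcal{T}_{h_0})^{-2s}$, with the hidden constant depending on $|u_l|_s$, $q$, and the discrepancy between $\theta$ and the threshold $\theta_\ast$ that controls $\alpha$ and the constant in Lemma \ref{complexity-upper-bound-of-dof}.

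Finally, the eigenvalue estimate follows by feeding the eigenfunction bound into Proposition \ref{prop3.2}: $\lambda^{h_n,l}-\lambda \lc \|u_l-E_{h_n}u_l\|_{a,\Omega}^2 \le \|U-E_{h_n}U\|_{a,\Omega}^2 \le e_n \lc (\#\mathcal{T}_{h_n}-\#\mathcal{T}_{h_0})^{-2s}$. The main obstacle I foresee is bookkeeping rather than conceptual: one must be careful to keep the two flavours of total error ($\eta$-based versus $osc$-based) consistent when transferring the contraction of Theorem \ref{error-reduction} into the hypothesis of Lemma \ref{complexity-upper-bound-of-dof}, and to ensure that the constants in the geometric-series estimate absorb the $q$-dependent factors introduced by the eigenspace basis expansions in Lemmas \ref{eta-etah} and \ref{mark-eta-etah}. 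Once that equivalence is pinned down, the contraction plus summation argument runs exactly as in the scalar source-problem case.
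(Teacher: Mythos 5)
Your proposal is correct and mirrors the paper's own proof step for step: establish the one-sided equivalence between the $\eta$-based and $osc$-based total error via the lower a posteriori bound \eqref{lower-bound}, feed Lemma \ref{complexity-upper-bound-of-dof} into the contraction of Theorem \ref{error-reduction}, sum the geometric series, and close with Lemma \ref{complexity-refine} and $osc\le\eta$. The only cosmetic difference is that you package the two one-sided inequalities as a contraction for $e_k$ directly, whereas the paper keeps the contraction in the $\eta$-based quantity and only switches to the $osc$-based quantity at the two endpoints; both handle the bookkeeping you flag in the same way.
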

\begin{proof}
We see from (\ref{lower-bound}) that
\begin{eqnarray*}
\|U - E_{h_k} U\|^2_{a, \Omega} + \gamma \eta^2_{h_k}(E_{h_k} U, \Omega) \leq \check{C} \big( \|U - E_{h_k} U\|^2_{a, \Omega} + \gamma
osc^2_{h_k}(E_{h_k} U, \Omega)\big),
\end{eqnarray*}
where $ \check{C} = \max(1 + \frac{\gamma}{C_2^2}, \frac{C_3^2}{C_2^2}).$
 Hence, we get from (\ref{complexity-optimal-neq1}) that
\begin{eqnarray*}
\# \mathcal{M}_{h_{k}}  &\leq&
  (\frac{1}{\sqrt{2}}\beta_1)^{-1/s}  \check{C}^{\frac{1}{2s}}
  \Big(  \|U - E_{h_k} U\|_{a,\Omega}^2 + \gamma \eta^2_{h_k}(E_{h_k} U)\Big)^{-1/2s}
 \Big(q^{\frac{1}{2s} -1}  \sum_{l=1}^q|u_l|_{s}^{1/s}\Big).
\end{eqnarray*}

Note that  Theorem \ref{error-reduction} implies
\begin{eqnarray*}
\|U-E_{h_{k+1}} U\|_{a,\Omega}^2 + \gamma \eta^2_{h_{k+1}}(E_{h_{k+1}} U, \Omega)\leq
   \alpha^2 \Big( \|U-E_{h_k} U\|_{a,\Omega}^2 + \gamma \eta^2_{h_{k}}(E_{h_{k}} U,
   \Omega)\Big).
\end{eqnarray*}
Thus for $0\leq k < n$, we arrive at
\begin{eqnarray*}
&& \Big( \|U - E_{h_k} U\|_{a,\Omega}^2 + \gamma \eta^2_{h_k}(E_{h_k} U, \Omega)\Big)^{-1/2s}\nonumber\\
 & \leq&
 \alpha^{(n-k)/s}\Big( \|U-E_{h_{n}} U\|_{a,\Omega}^2
 + \gamma \eta^2_{h_{n}}(E_{h_{n}} u, \Omega)\Big)^{-1/2s}.
\end{eqnarray*}
We next employ Lemma \ref{complexity-refine}
 to deduce that
\begin{eqnarray*}
 &&\#\mathcal{T}_{h_n}-\#\mathcal{T}_{h_0} \lc \sum_{k=0}^{n-1}
 \# \mathcal{M} _{h_{k}} \nonumber\\
 &\lc& \Big( \sum_{l=1}^q|u_l|_{s}^{1/s} \Big) \sum_{k=0}^{n-1}
 \Big(\|U-E_{h_k} U\|_{a,\Omega}^2 + \gamma \eta^2_{h_k}(E_{h_k} U, \Omega)\Big)^{-1/2s}\nonumber\\
 &\lc& \Big(  \|U-E_{h_n} U\|_{a,\Omega}^2 + \gamma \eta^2_{h_n}(E_{h_n} U, \Omega)\Big)^{-1/2s}  \Big( \sum_{l=1}^q|u_l|_{s}^{1/s}\Big)
 \sum_{k=1}^{n} \alpha^{\frac{k}{s}},
\end{eqnarray*}
which together with the fact $\alpha<1$ leads to
\begin{eqnarray*}
 \#\mathcal{T}_{h_n}-\#\mathcal{T}_{h_0} \lc  \Big( \|U-E_{h_n} U\|_{a,\Omega}^2 +
 \gamma \eta^2_{h_n}(E_{h_n} U, \Omega)\Big)^{-1/2s} \Big(\sum_{l=1}^q|u_l|_{s}^{1/s} \Big).
\end{eqnarray*}
 Since $osc(E_{h_n} U, \Omega) \leq  \eta_{h_n}(E_{h_n} U,
 \Omega)$, we conclude
\begin{eqnarray*}
&& \#\mathcal{T}_{h_n}-\#\mathcal{T}_{h_0} \lc  \Big( \|U-E_{h_n} U\|_{a,\Omega}^2 +
 \gamma osc^2_{h_n}(E_{h_n} U, \Omega)\Big)^{-1/2s} \Big( \sum_{l=1}^q|u_l|_{s}^{1/s} \Big).
\end{eqnarray*}
 This completes the proof.
\end{proof}

Then, similar to Theorem \ref{thm-convergence-rate-eigenspace}, we obtain from Theorem \ref{thm-optimal-complexity} that
\begin{theorem}\label{thm-optimal-complexity-eigenspace}
Let $\lambda \in \mathbb{R}$  be some eigenvalue of (\ref{eigen}) with multiplicity $q$, $\{u_l\}_{l=1}^q$ be an orthonormal basis of $M(\lambda)$,
and $u_l \in \mathcal{A}^s (l=1, \cdots, q)$. Let $\{(\lambda_{h_k, l}, u_{h_k, l}), l=1,\cdots,q\}_{k\in \mathbb{N}_0}$ be a sequence of finite element
 solutions
 produced by {\bf Algorithm \ref{algorithm-AFEM-eigen}} and $M_{h_{k}}(\lambda) = span \{u_{h_k, 1}, \cdots, u_{h_k, q}\}$.
 If   Assumption \ref{assump-5.1} are satisfied for {\bf Algorithm \ref{algorithm-AFEM-eigen}}, then the $n$-th iterate solution space $M_{h_n}(\lambda)$ of
 {\bf Algorithm \ref{algorithm-AFEM-eigen}} satisfies the quasi-optimal bound
 \begin{eqnarray*}
  \delta^2_{H_0^1(\Omega)}(M(\lambda), M_{h_n}(\lambda)) \lc (\#\mathcal{T}_{h_n}
  -\#\mathcal{T}_{h_0})^{-2s},
  \end{eqnarray*}
 where the hidden constant depends on the exact solution $(\lambda, M(\lambda))$ and
 the discrepancy between
 $\theta$ and $\frac{1}{q^2}\frac{C_2^2 \gamma}{C_3^2 ( C_1^2 + (1 + 2 C_{\ast}^2 C_1^2)\gamma
)}$.
\end{theorem}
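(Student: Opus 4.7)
The plan is to deduce this theorem from Theorem \ref{thm-optimal-complexity} by essentially the same reduction that carries Theorem \ref{error-reduction} to Theorem \ref{thm-convergence-rate-eigenspace}; that is, I will convert the complexity bound on $\|U-E_{h_n}U\|_{a,\Omega}^2$ into a bound on the gap between $M(\lambda)$ and $M_{h_n}(\lambda)$. The reader should expect almost no new work beyond what is already in place, since Theorem \ref{thm-optimal-complexity} provides the substantive estimate and Lemma \ref{thm-dxy-dyx} supplies the missing symmetry.

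First I would invoke Theorem \ref{thm-optimal-complexity} to conclude
\begin{eqnarray*}
 \|U-E_{h_n}U\|_{a,\Omega}^{2} \le \|U-E_{h_n}U\|_{a,\Omega}^{2}+\gamma\, osc^{2}_{h_n}(E_{h_n}U,\Omega)\lc (\#\mathcal{T}_{h_n}-\#\mathcal{T}_{h_0})^{-2s}.
\end{eqnarray*}
Next, for an arbitrary $u\in M(\lambda)$ with $\|u\|_{b}=1$, I expand $u=\sum_{l=1}^{q}\alpha_{l}u_{l}$ in the orthonormal basis, so $\sum_{l}\alpha_{l}^{2}=1$. A Cauchy–Schwarz inequality, identical to the one used in the proof of Theorem \ref{thm-convergence-rate-eigenspace}, gives
\begin{eqnarray*}
 \|u-E_{h_n}u\|_{a,\Omega}^{2}=\Big\|\sum_{l=1}^{q}\alpha_{l}(u_{l}-E_{h_n}u_{l})\Big\|_{a,\Omega}^{2}\le \sum_{l=1}^{q}\|u_{l}-E_{h_n}u_{l}\|_{a,\Omega}^{2}=\|U-E_{h_n}U\|_{a,\Omega}^{2}.
\end{eqnarray*}
Since $E_{h_n}u\in M_{h_n}(\lambda)$, this yields, in view of the definition \eqref{def-D} of $d_{H_{0}^{1}(\Omega)}$, the one-sided estimate
\begin{eqnarray*}
 d_{H_{0}^{1}(\Omega)}^{2}(M(\lambda),M_{h_n}(\lambda))\lc (\#\mathcal{T}_{h_n}-\#\mathcal{T}_{h_0})^{-2s}.
\end{eqnarray*}

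Next I would upgrade the one-sided bound to a bound on the gap. Because $\dim M(\lambda)=\dim M_{h_n}(\lambda)=q<\infty$, Lemma \ref{thm-dxy-dyx} applies: as $h_{0}\ll 1$ makes $d_{H_{0}^{1}(\Omega)}(M(\lambda),M_{h_n}(\lambda))$ small, the factor $[1-d_{H_{0}^{1}(\Omega)}(M(\lambda),M_{h_n}(\lambda))]^{-1}$ is bounded, so
\begin{eqnarray*}
 d_{H_{0}^{1}(\Omega)}^{2}(M_{h_n}(\lambda),M(\lambda))\lc d_{H_{0}^{1}(\Omega)}^{2}(M(\lambda),M_{h_n}(\lambda))\lc (\#\mathcal{T}_{h_n}-\#\mathcal{T}_{h_0})^{-2s}.
\end{eqnarray*}
Combining the two one-sided estimates with the definition \eqref{def-gap} of $\delta_{H_{0}^{1}(\Omega)}$ produces the claimed quasi-optimal bound.

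The conceptual obstacle has already been handled: it is the passage from the complexity bound for $\|U-E_{h_n}U\|_{a,\Omega}^{2}+\gamma\, osc^{2}_{h_n}(E_{h_n}U,\Omega)$ (Theorem \ref{thm-optimal-complexity}) to a quantity that controls \emph{all} unit-norm elements of $M(\lambda)$; the Cauchy–Schwarz step above is the right tool only because the $u_{l}$ are an $L^{2}$-orthonormal basis and the coefficients $\alpha_{l}$ satisfy $\sum\alpha_{l}^{2}=1$. The only point deserving a moment of care is that $h_{0}\ll 1$ is small enough so that Lemma \ref{thm-dxy-dyx} produces a harmless multiplicative constant; this is automatic from the assumptions already used throughout Section \ref{complexity}.
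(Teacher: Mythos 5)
Your proposal is correct and coincides with the paper's own (implicit) argument: the paper states Theorem \ref{thm-optimal-complexity-eigenspace} as following from Theorem \ref{thm-optimal-complexity} ``similar to Theorem \ref{thm-convergence-rate-eigenspace},'' which is exactly the reduction you spell out, namely the triangle/Cauchy--Schwarz step using the $L^2$-orthonormal basis to control $d_{H_0^1(\Omega)}(M(\lambda),M_{h_n}(\lambda))$, followed by Lemma \ref{thm-dxy-dyx} to control the reverse distance and hence the gap.
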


\section{Numerical examples} \label{numerical-experiments}\setcounter{equation}{0}
In this section,  we  show some numerical examples for both linear finite elements and quadratic finite elements in three dimensions to illustrate
the theoretical results obtained in this paper.

 Our numerical examples were carried out on LSSC-III in
the State Key Laboratory of Scientific and Engineering Computing, Chinese Academy of Sciences, and our codes were based on the toolbox PHG of the State
Key Laboratory of Scientific and Engineering Computing, Chinese Academy of Sciences.


For the convenience of present for our numerical results below, we denote $\eta_h(U_h, \Omega) $ as $\eta_h(M_h(\lambda), \Omega)$, where $U_h = (u_{h, 1}, \cdots, u_{h, q})$, $u_{h, 1}, \cdots, u_{h, q}$ are the $q$ discrete eigenfunctions corresponding to $\lambda$.

{\bf Example 1} Consider the following harmonic oscillator equation, which is a simple model in quantum mechanics \cite{greiner94}:
\begin{eqnarray}\Label{example-harmonic}
-\frac{1}{2} \Delta u +\frac{1}{2} |x|^2 u = \lambda u ~~ \mbox{in} ~\mathbb{R}^3,
\end{eqnarray}
where $|x|=\sqrt{|x_1|^2+|x_2|^2+|x_3|^2}$. The  eigenvalues
 of (\ref{example-harmonic}) are $\lambda_n  = n + \frac{1}{2}$ with multiplicity $\frac{ n(n+1)}{2}(n = 1, 2, \cdots)$, and its associated
eigenfunction is $u_n = \gamma e^{-|x|^2/2} H_n(x)$ with any nonzero constant $\gamma$ and $H_n(x) = (-1)^n e^{x^2} \frac{d^n}{d x^n}(e^{-x^2})$.

Since the solution of (\ref{example-harmonic})  exponentially
decays, we may solve it over some bounded domain $\Omega$. In the computation, we solve the following eigenvalue problem: find $(\lambda, u)\in \mathbb{R}\times H^1_0(\Omega)$ such that
$\displaystyle\int_{\Omega}|u|^2dx=1$ and
\begin{eqnarray*}\Label{example-harmonic-1}
\left\{\begin{array}{rcl}
-\frac{1}{2} \Delta u +\frac{1}{2} |x|^2 u &=& \lambda u  ~~ \mbox{in} ~\Omega, \\[0.2cm]
u&=& 0 ~~ \mbox{on}~ \partial\Omega,
\end{array}
\right.
\end{eqnarray*}
where $\Omega = (-5.5,5.5)^3.$ We calculate the approximation of the first two smallest eigenvalues $\lambda_1$ and $\lambda_2$ with multiplicity $1$ and $3$, respectively, and
their corresponding eigenfunction spaces $M(\lambda_1)$ and $M(\lambda_2)$ with dimension $1$ and $3$, respectively.

 Some cross-sections of the adaptively refined mesh constructed by the {\bf D\"{o}rfler marking strategy}
are displayed in Figure \ref{harmonic-grid}, from which we observe that the mesh is denser in the center of the domain where the solutions oscillate
quickly than in the domain far away from the center where the solution is smoother.  This shows that our adaptively refined mesh can catch the
oscillation of the solution efficiently and the a posteriori error estimators we designed are efficient. Our numerical results are presented in
Figure \ref{figure-error-eigenvalue-harmonic} and Figure \ref{figure-error-harmonic}. Since the multiplicity of the first two smallest eigenvalues are $1$
and $3$, respectively, for the discrete problem, we calculate the first $4$ eigenpairs.  We see from the left figure of Figure
\ref{figure-error-eigenvalue-harmonic} that   the convergence curves of error for all eigenvalues by using linear finite elements are
parallel to the line with slope $-\frac{2}{3}$.  Besides, we also observe that the convergence curves for the second, the third and the forth eigenvalues
overlap together, this coincide with the fact that the multiplicity of the second eigenvalue  is $3$. Meanwhile, from the left figure of  Figure
\ref{figure-error-harmonic} we  see that by using linear finite elements,  the convergence curves of the a posteriori error estimators for
eigenfunction space $\eta_h(M_h(\lambda_1))$ and $\eta_h(M_h(\lambda_1))$ are parallel to the line with slope $-\frac{1}{3}$. From Theorem \ref{thm-error-estimator-space},  $\eta_h(M_h(\lambda),
\Omega) \approx \delta_{H_0^1(\Omega)}(M(\lambda), M_h(\lambda))$, we obtain that  the convergence curves of error for the gap between space $M(\lambda_1)$ and its finite approximation $M_h(\lambda_1)$, the gap between space $M(\lambda_2)$ and its finite approximation $M_h(\lambda_2)$  are also parallel to the line with slope $-\frac{1}{3}$. This  means that the approximation of eigenvalues as well as the eigenfunction space have reached the optimal convergence rate, which coincides with our theory in
Section \ref{convergence-sec} and Section \ref{complexity}.   We have the similar conclusion for the
quadratic  finite elements from the right figures of Figure \ref{figure-error-eigenvalue-harmonic} and Figure \ref{figure-error-harmonic}.


\begin{figure}[htb]
  \centering
  \subfloat[Linear finite elements]{
    \includegraphics[width=6cm]{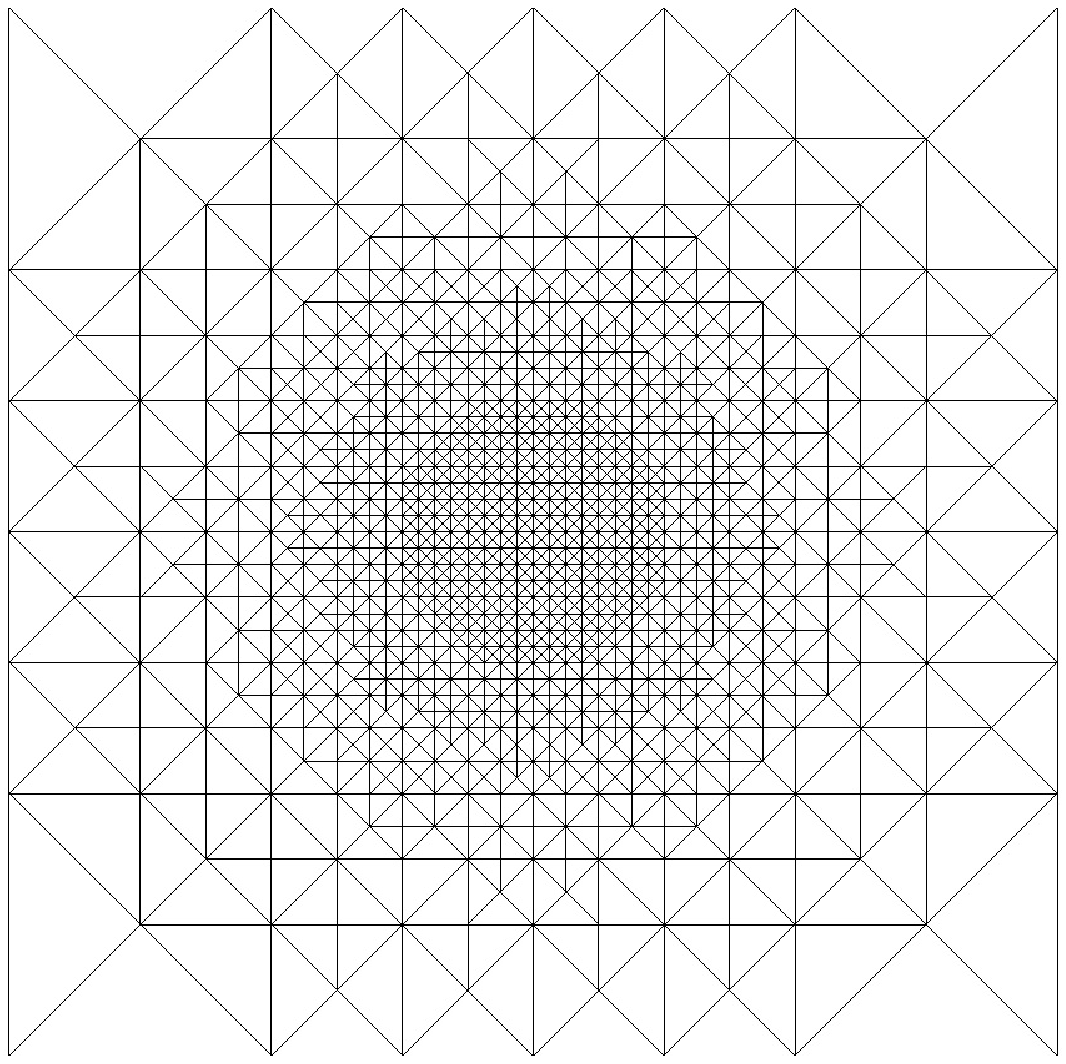}
  }
  \,
  \subfloat[Quadratic finite elements]{
    \includegraphics[width=6.0cm]{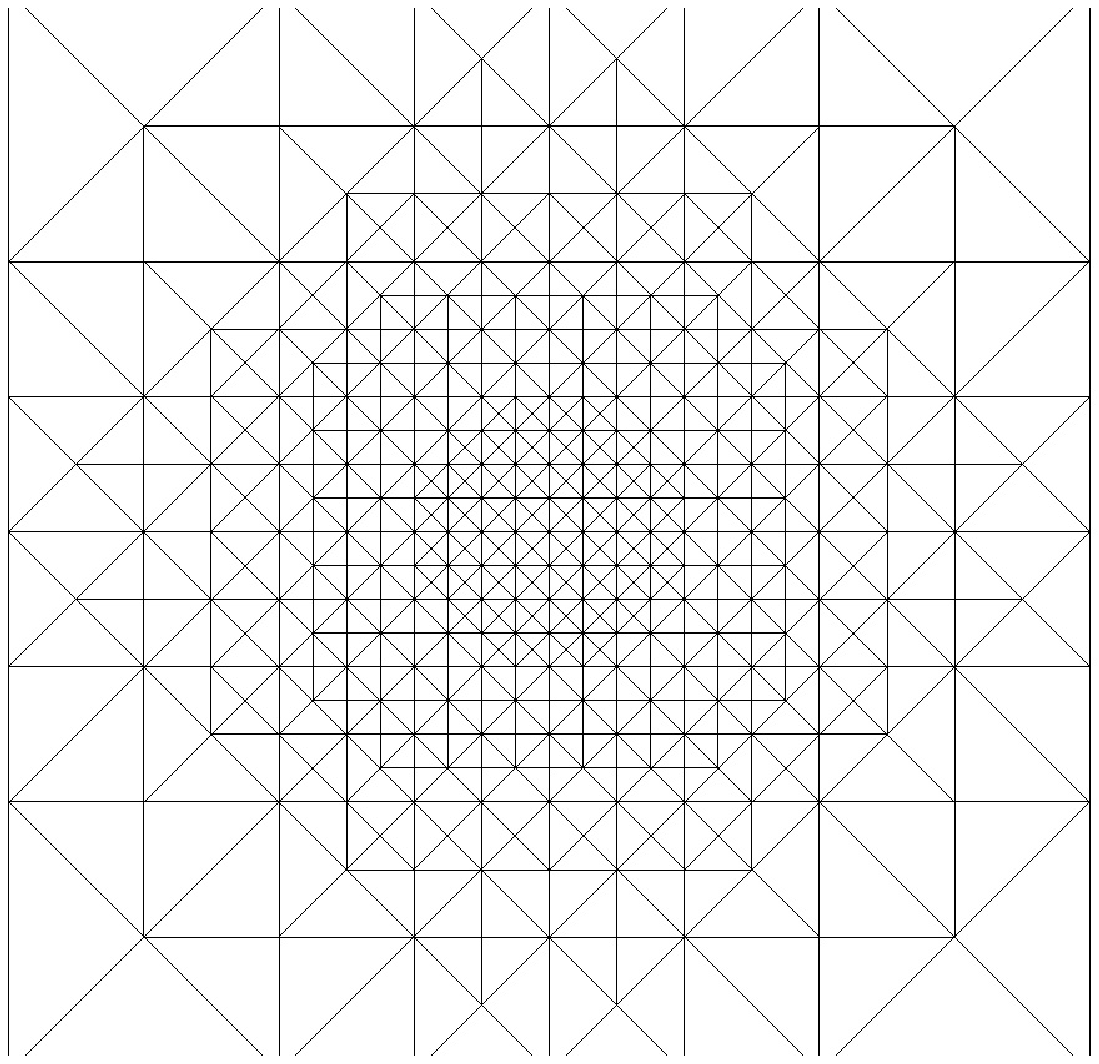}
  }
  \caption{The cross-sections of an
adaptive mesh of {\bf Example 1}}
  \label{harmonic-grid}
\end{figure}

%
%
%

\begin{figure}[htb]
  \centering
  \subfloat[Linear finite elements]{
    \includegraphics[width=5cm,angle=-90]{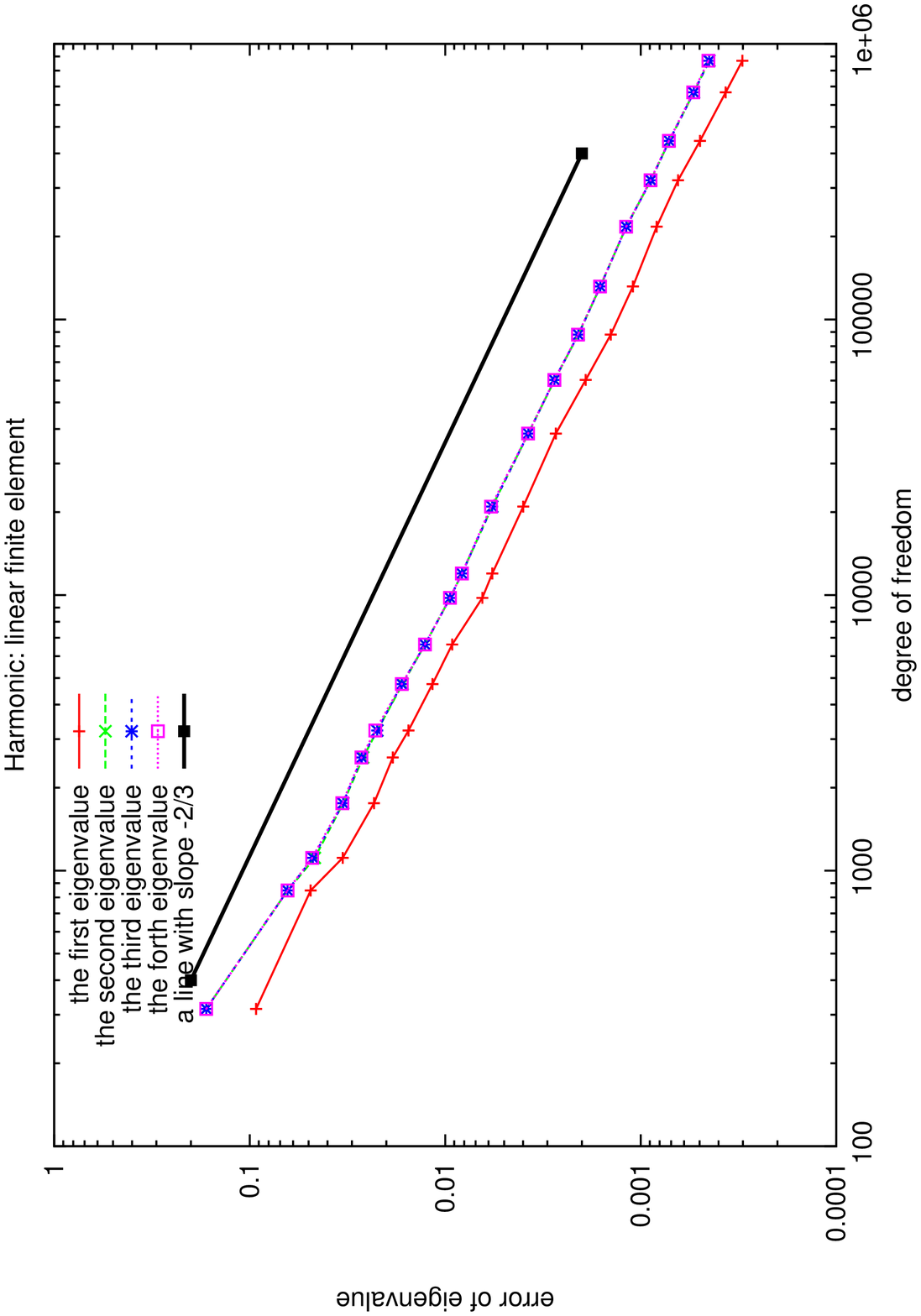}
  }
  \,
  \subfloat[Quadratic finite elements]{
    \includegraphics[width=5.0cm,angle=-90]{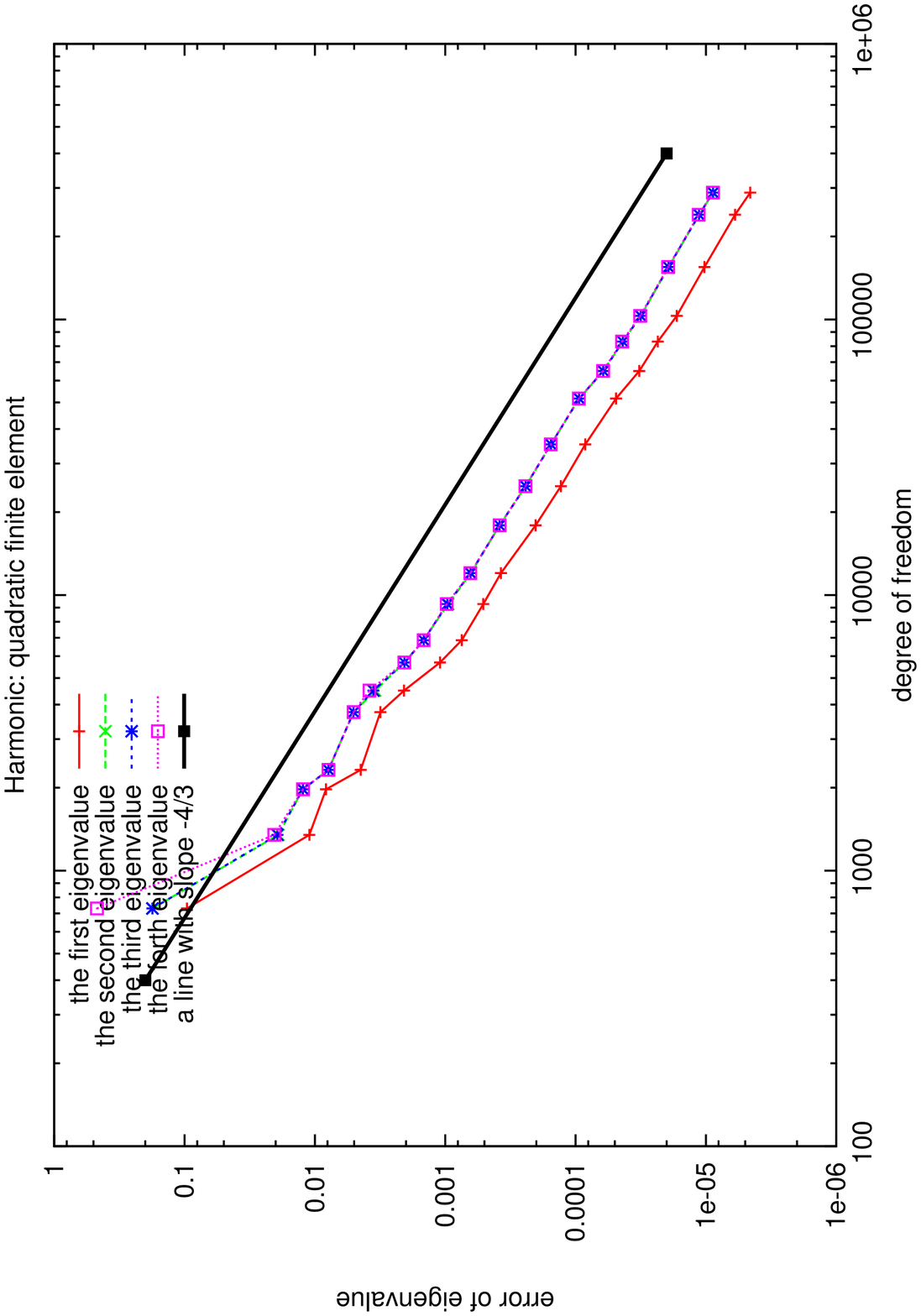}
  }
  \caption{The convergence curve of relative error for the eigenvalues of {\bf Example
1}}
  \label{figure-error-eigenvalue-harmonic}
\end{figure}


\begin{figure}[htb]
  \centering
  \subfloat[Linear finite elements]{
    \includegraphics[width=5cm,angle=-90]{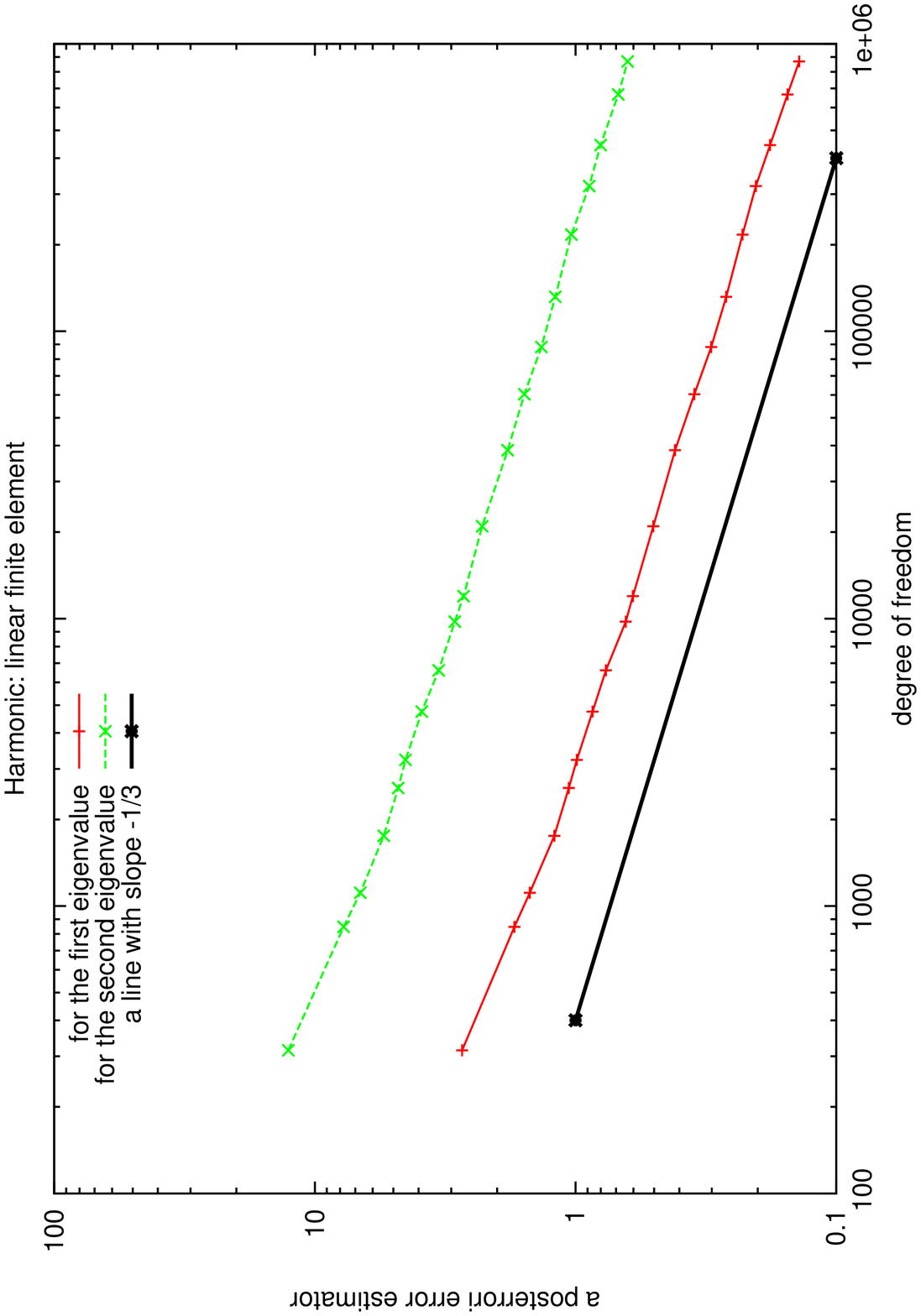}
  }
  \,
  \subfloat[Quadratic finite elements]{
    \includegraphics[width=5.0cm,angle=-90]{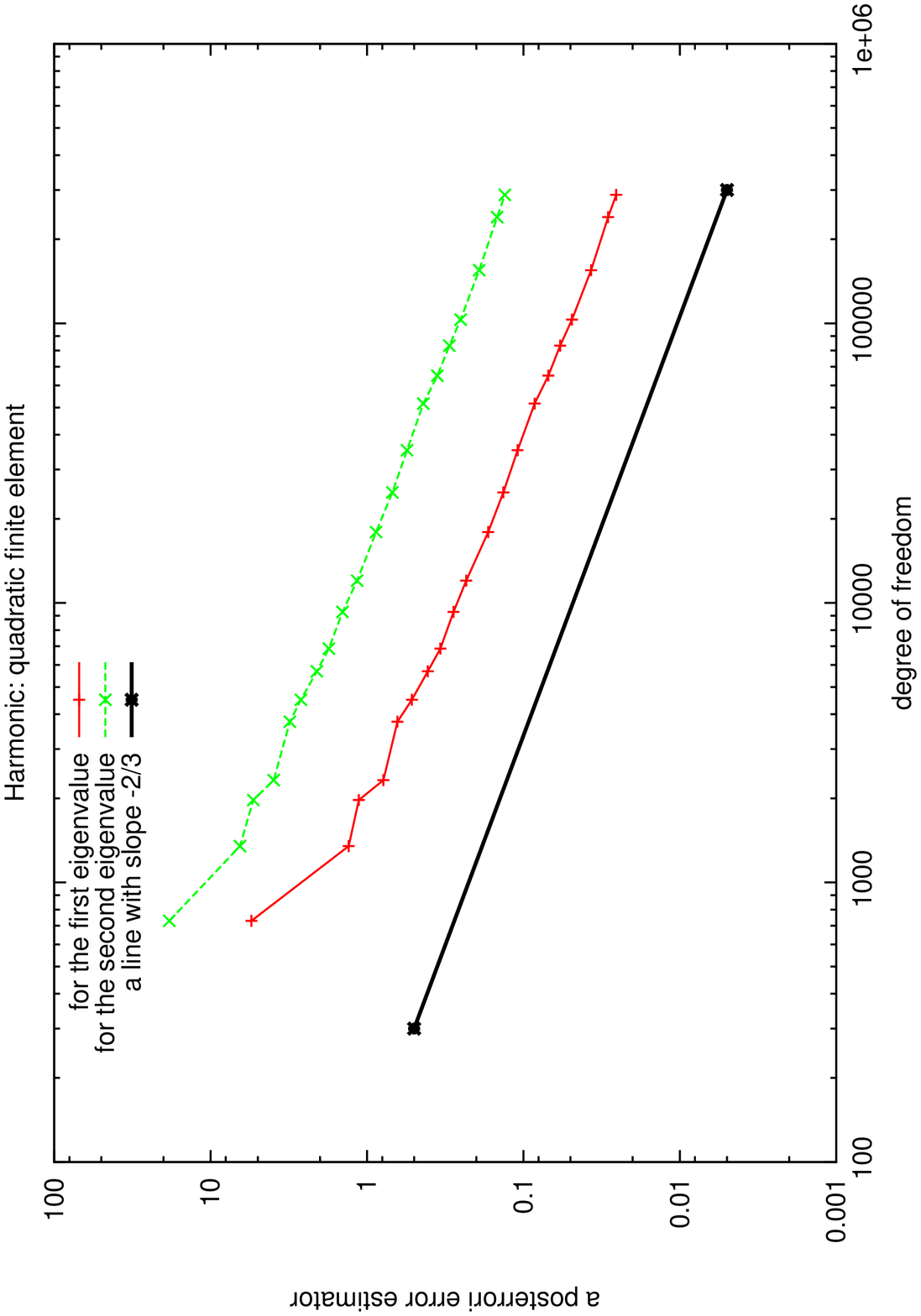}
  }
\caption{The convergence curves of $\eta_h(M_h(\lambda_1), \Omega)$ and $\eta_h(M_h(\lambda_2), \Omega)$ for {\bf Example 1}}
  \label{figure-error-harmonic}
\end{figure}





\vskip 0.2cm

{\bf Example 2} Consider the Schr{\" o}dinger equation for hydrogen atoms:
\begin{eqnarray}\Label{atom-H}
\left(-\frac {1}{2}\Delta  -\frac {1}{|x|}\right) u = \lambda u ~~\mbox{in}~ \mathbb{R}^3
\end{eqnarray}
with $\displaystyle\int_{\mathbb{R}^3}|u|^2dx=1$. The eigenvalues of (\ref{atom-H}) are $\lambda_n = -\frac{1}{2 n^2} (n= 1, 2, \cdots) $ and the
multiplicity of $\lambda_n$ is $n^2$ (see, e.g., \cite{greiner94}).

Since the eigenfunctions of (\ref{atom-H})  decay exponentially, instead of (\ref{atom-H}), we may solve the following eigenvalue problem:
find $(\lambda,u)\in \mathbb{R}\times H^1_0(\Omega)$ such that $\displaystyle\int_{\Omega}|u|^2dx=1$ and
\begin{equation}\label{atomH-numer}
\left\{\begin{array}{rcl} \displaystyle \left(-\frac {1}{2}\Delta -
\frac{1}{|x|}\right) u &=& \lambda u ~~\mbox{in} ~\Omega, \\
u &=& 0 ~~ \mbox{on}~\partial\Omega,
\end{array}\right.
\end{equation}
where $\Omega$ is some bounded domain in $\mathbb{R}^3$. In our computation, we choose $\Omega=(-20.0, 20.0)^3$ and find the first $2$ smallest
eigenvalue approximations and their associated eigenfunction space approximations. Since the multiplicity of the $n$-th smallest eigenvalue is $n^2$,
for the discrete problem of (\ref{atomH-numer}), we  calculate the first $5$ smallest eigenvalues and their associated eigenfunctions.

Figure \ref{atomH-grid} is the
cross-sections of the adaptively refined mesh constructed by {\bf D\"{o}rfler marking strategy}.
Similarly, we  see that for both the linear finite elements and quadratic finite elements, the mesh is much denser in the center of the domain where the solution oscillates quickly  than in the domain far away from the center where the solution is smooth. This means that the a posteriori error estimators we used are efficient.
\begin{figure}[htb]
  \centering
  \subfloat[Linear finite elements]{
    \includegraphics[width=5cm]{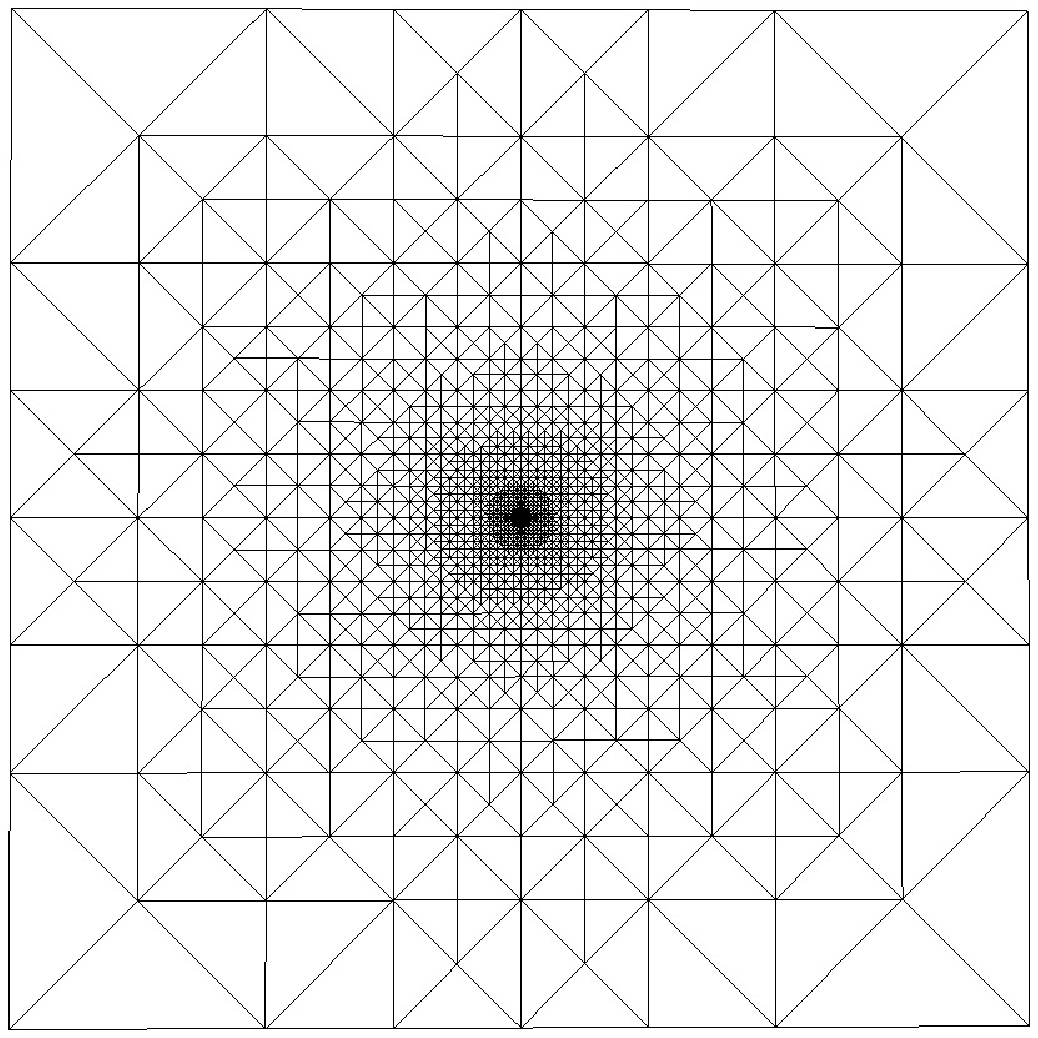}
  }
  \,
  \subfloat[Quadratic finite elements]{
    \includegraphics[width=5.0cm]{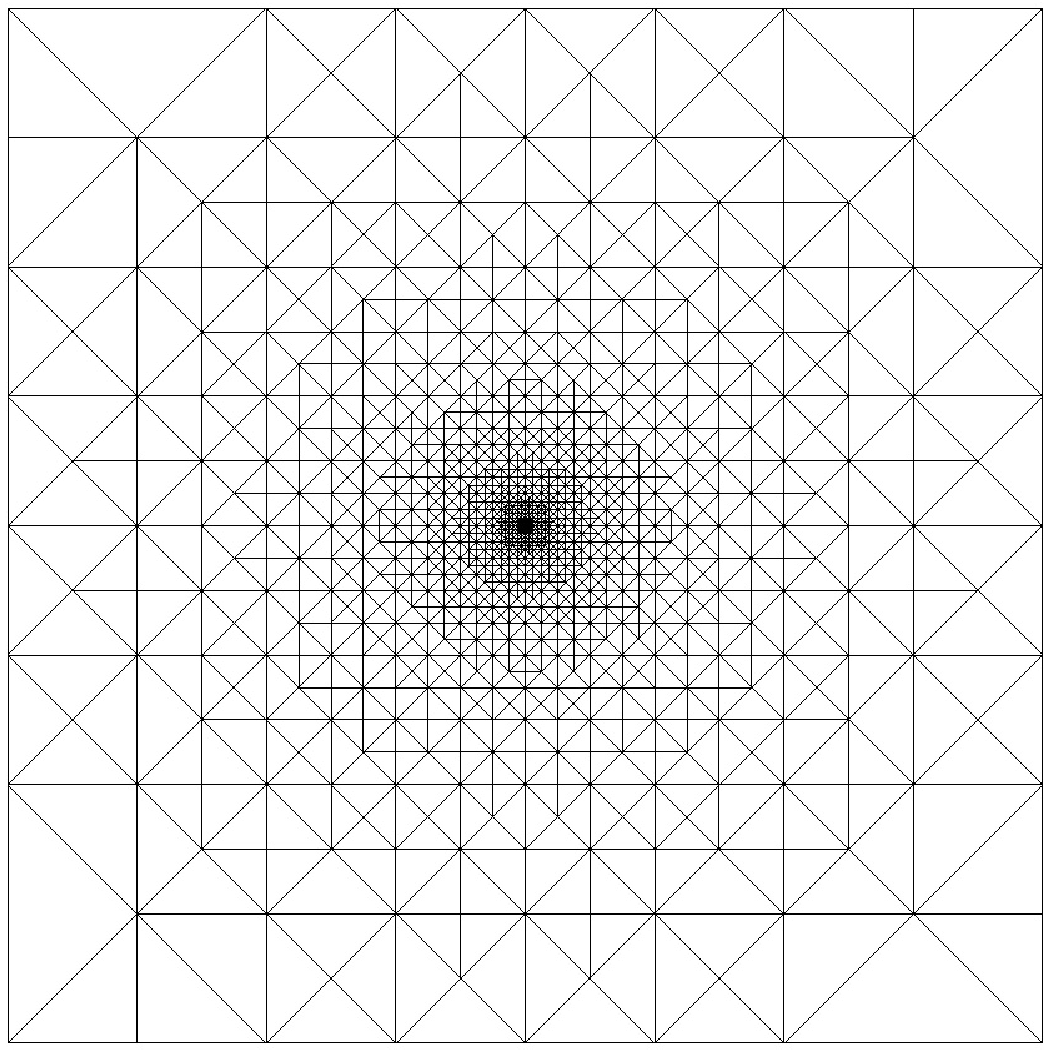}
  }
  \caption{The cross-section of an adaptive mesh of {\bf Example 2}}
  \label{atomH-grid}
\end{figure}

\begin{figure}[htb]
  \centering
  \subfloat[Linear finite elements]{
    \includegraphics[width=5cm,angle=-90]{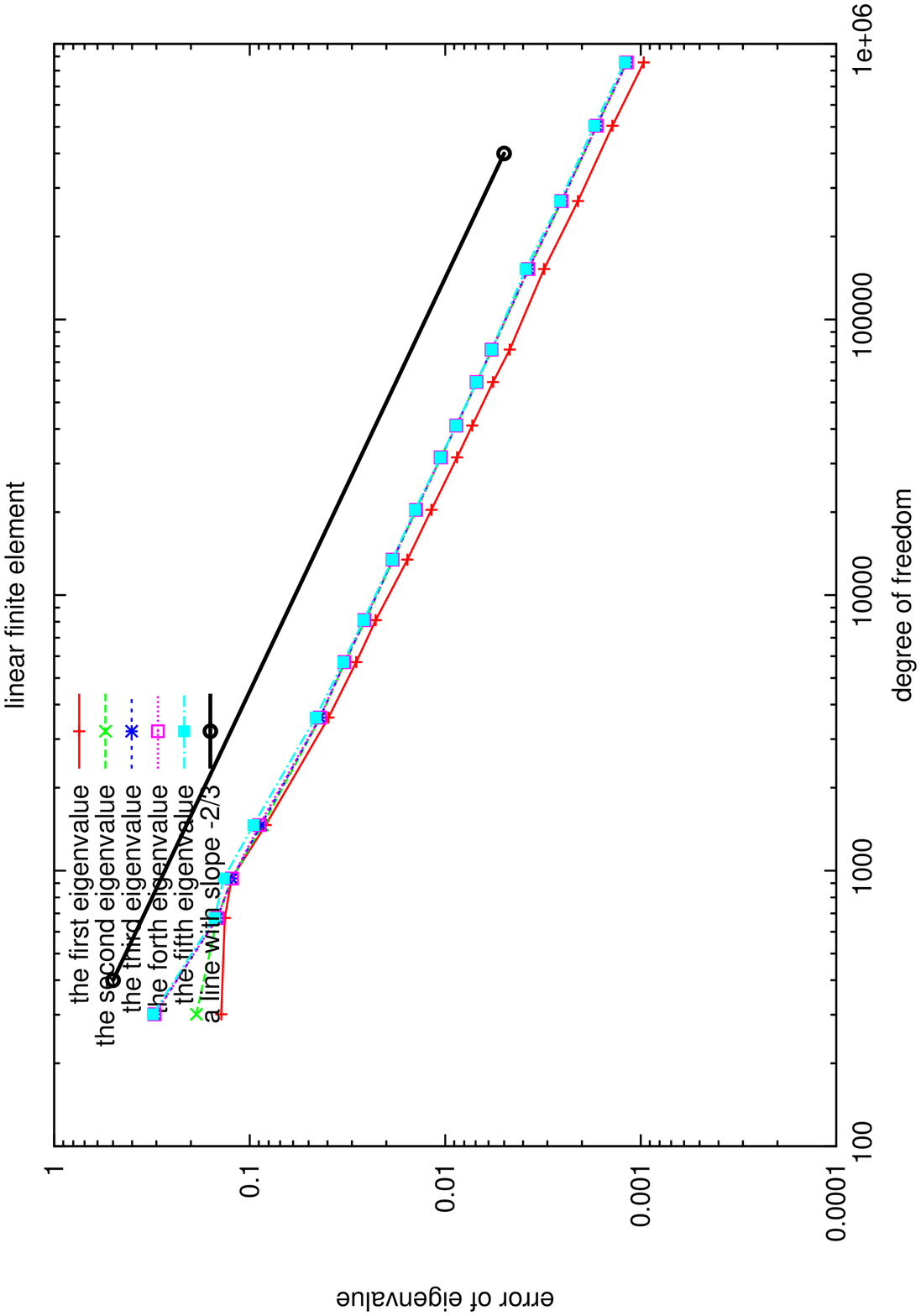}
  }
  \,
  \subfloat[Quadratic finite elements]{
    \includegraphics[width=5.0cm,angle=-90]{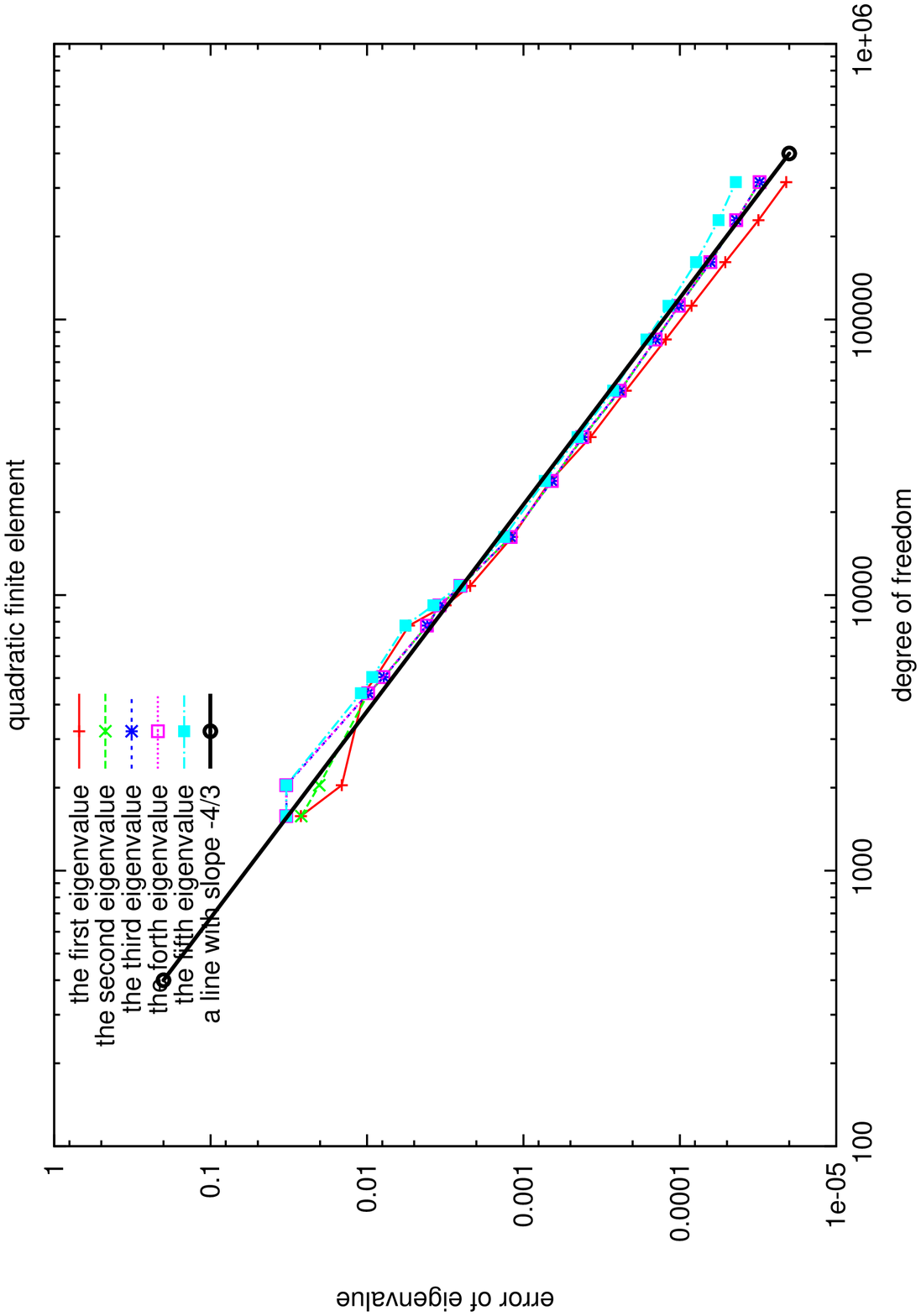}
  }
  \caption{The convergence curves of relative error for  eigenvalues of {\bf Example 2}}
  \label{figure-error-eigenvalue-atomH}
\end{figure}

\begin{figure}[htb]
  \centering
  \subfloat[Linear finite elements]{
    \includegraphics[width=5cm,angle=-90]{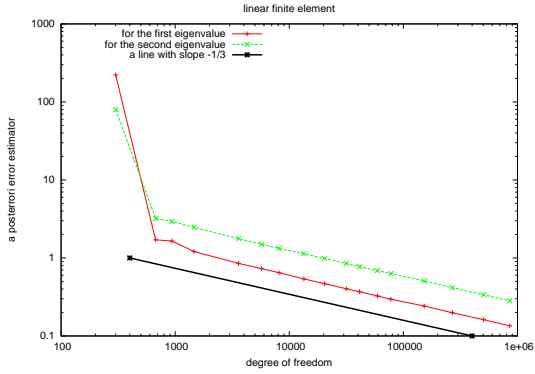}
  }
  \,
  \subfloat[Quadratic finite elements]{
    \includegraphics[width=5.0cm,angle=-90]{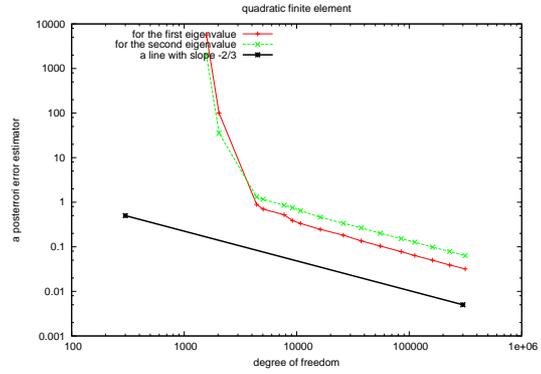}
  }
\caption{The convergence curves of $\eta_h(M_h(\lambda_1), \Omega)$ and $\eta_h(M_h(\lambda_2), \Omega)$ for {\bf Example 2}}
  \label{figure-error-atomH}
\end{figure}

The numerical results are presented in Figure \ref{figure-error-eigenvalue-atomH} and Figure \ref{figure-error-atomH}.
Similar to {\bf Example 1}, Figure \ref{figure-error-eigenvalue-atomH} shows that the convergence
curve for all the eigenvalues obtained by linear finite elements and quadratic finite elements
are parallel to the line with slope $-\frac{2}{3}$ and $-\frac{4}{3}$, respectively,
which means  all the eigenvalue approximations reach the optimal convergence rate for
 both linear finite element and quadratic finite element.
 Meanwhile,  we see from Figure \ref{figure-error-atomH} that convergence curve for  the
  a posteriori error estimators for eigenfunction space $\eta_h(M_h(\lambda_1))$
  and $\eta_h(M_h(\lambda_2))$ obtained by linear finite element are parallel to the line with slope $-\frac{1}{3}$,
  and those obtained by quadratic finite element are parallel to the line with slope  $-\frac{2}{3}$.
We observe that the approximation of eigenfunction space has
  also reached optimal convergent rate. These results validate our theoretical results.

{\bf Example 3} Consider the following eigenvalue problem which is defined in a non-convex domain: find $(\lambda,u)\in \mathbb{R}\times H^1_0(\Omega)$
such that $\displaystyle\int_{\Omega}|u|^2dx=1$ and
\begin{eqnarray}\Label{L-3d-problem}
\left\{\begin{array}{rcl} \displaystyle -\frac {1}{2}\Delta u &=& \lambda u ~~\mbox{in} ~\Omega, \\
u &=& 0 ~~ \mbox{on}~\partial\Omega,
\end{array}\right.
\end{eqnarray}
where $\Omega = (-5.0, 5.0)^3 \setminus (0, 5.0)^3$, see the left figure of Figure \ref{laplace-L-grid} below. We observe
from the numerical calculation that $\lambda_1 = 0.210651$ with multiplicity $1$ and $\lambda_2 = 0.331779$ with multiplicity $2$.

%
%

The surface of the adaptively refined meshes constructed by {\bf D\"{o}rfler marking strategy}   is shown in Figure  \ref{laplace-L-grid}.  Besides, some
cross-sections are displayed in Figure \ref{laplace-L-grid}. We see from these figures that for both linear finite elements and quadratic finite elements, the mesh is much denser along the lines where the solution is singular than in the domain far away from the singular lines. It indicates that our error estimator and marking strategy are efficient.

Our numerical results are listed in Figure \ref{figure-error-eigenval-laplace-L} and Figure \ref{figure-error-laplace-L}. Similar to {\bf Example 1} and {\bf Example 2}, we can also see that the approximations of eigenvalue as well as eigenfunction have reached optimal convergence rate, which coincides with our theory in Section \ref{convergence-sec} and Section \ref{complexity}.

\begin{figure}[htb]
  \centering
  \subfloat[nonconvex domain $\Omega$]{
    \includegraphics[width=4.5cm]{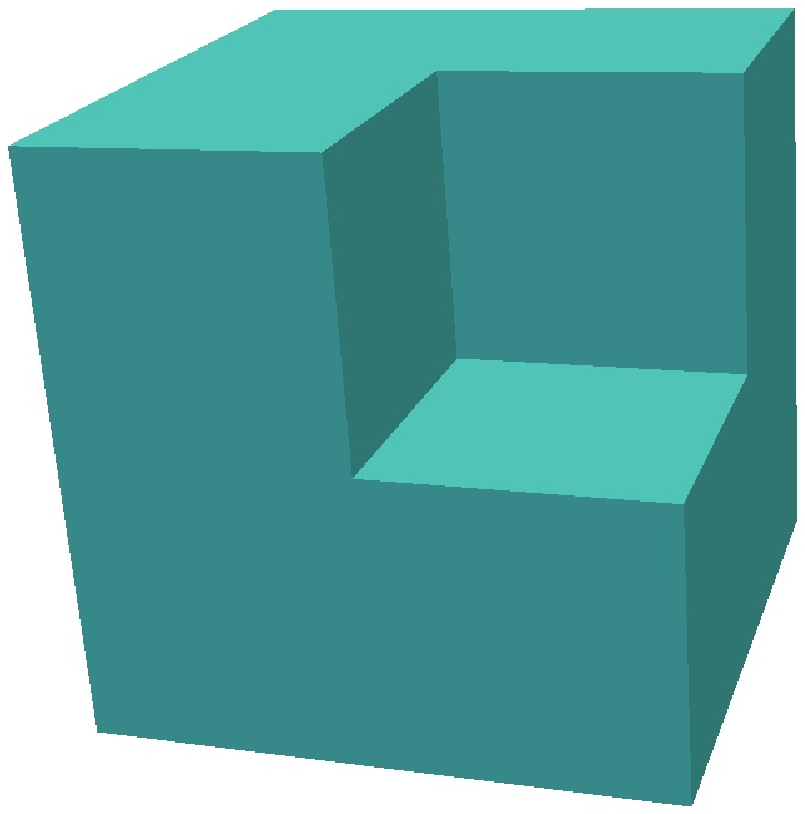}
  }
  \,
  \subfloat[Linear finite elements]{
    \includegraphics[width=4.5cm]{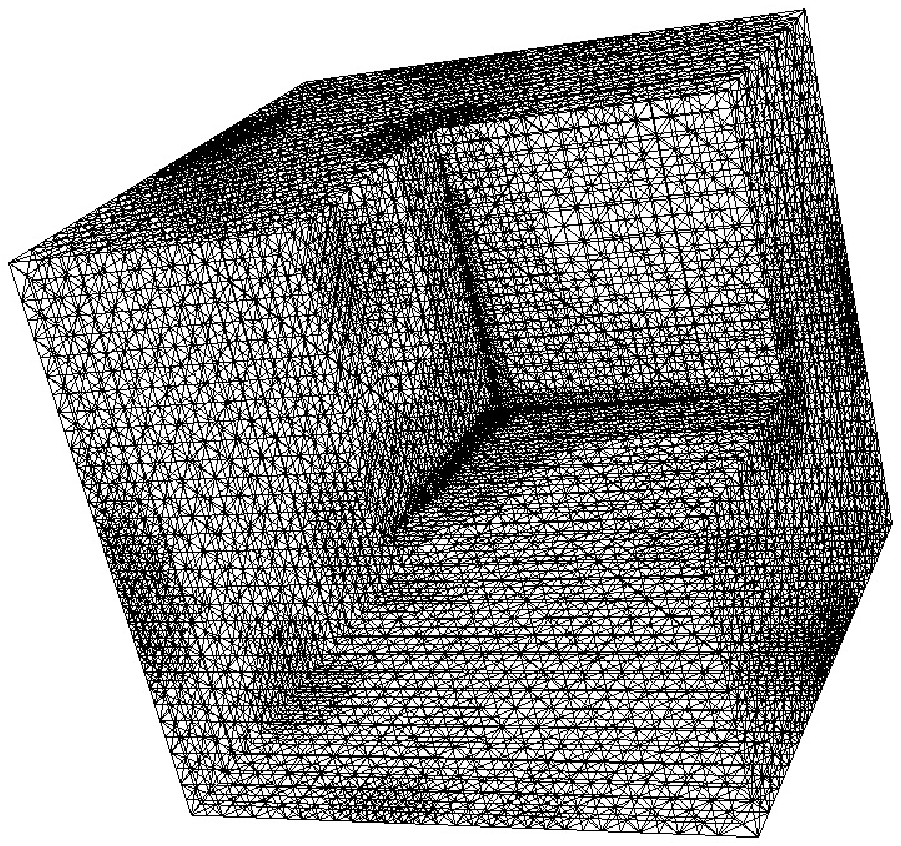}
  }
  \,
  \subfloat[Quadratic finite elements]{
    \includegraphics[width=4.5cm]{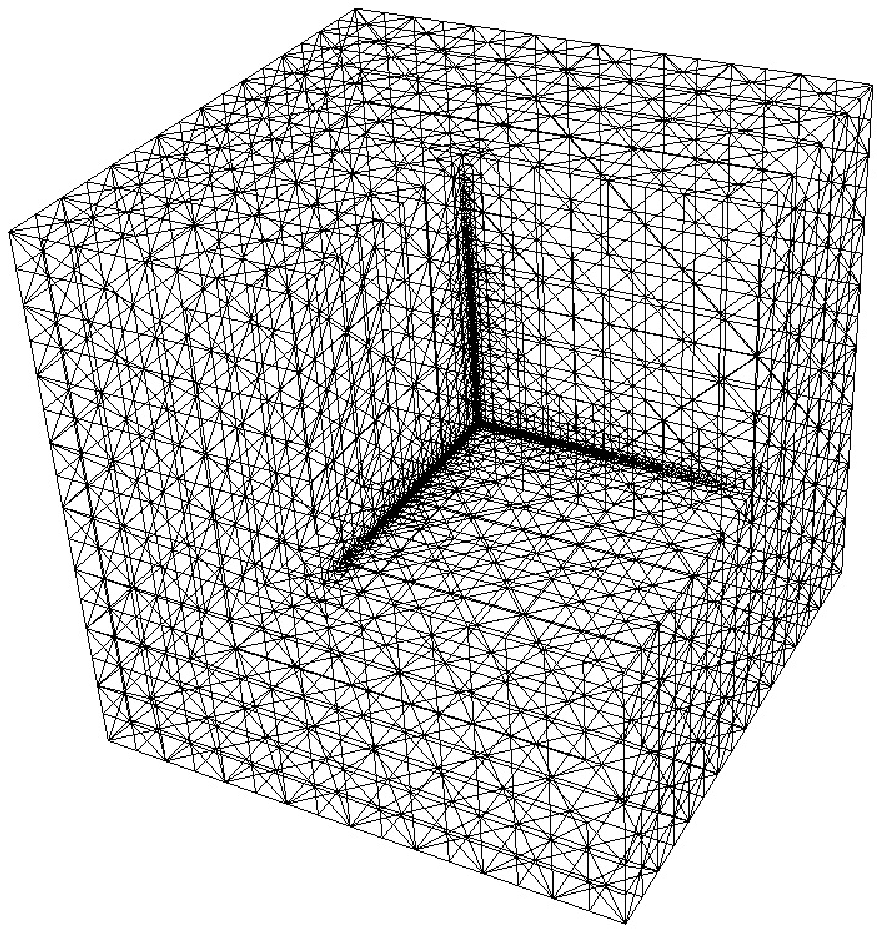}
  }
  \caption{The surface of an adaptive mesh of {\bf Example 3}}
  \label{laplace-L-grid}
\end{figure}

%

\begin{figure}[htb]
  \centering
  \subfloat[Linear finite elements]{
    \includegraphics[width=5cm,angle=-90]{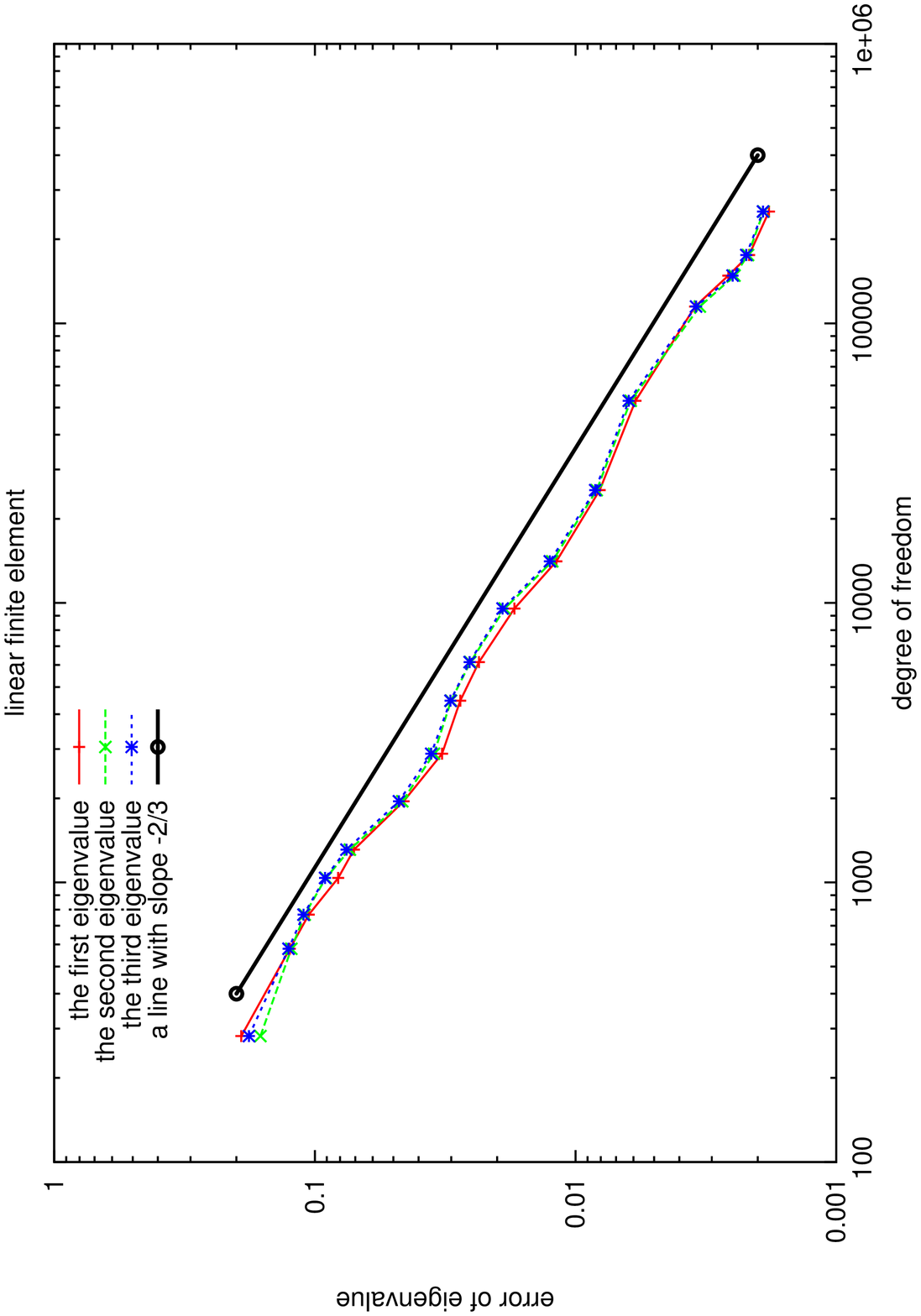}
  }
  \,
  \subfloat[Quadratic finite elements]{
    \includegraphics[width=5.0cm,angle=-90]{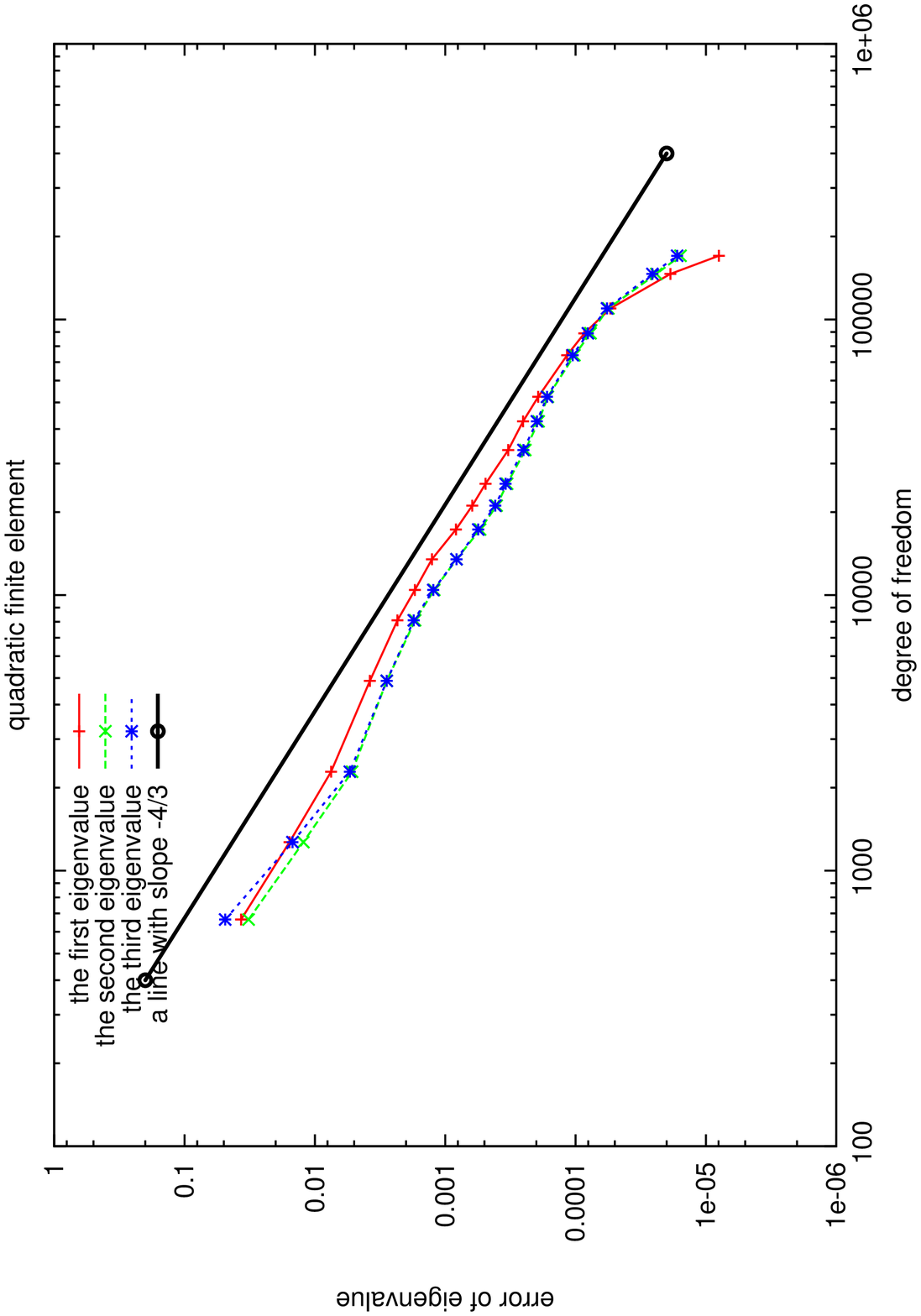}
  }
  \caption{The convergence curve of relative error for  eigenvalues of {\bf Example 3}}
  \label{figure-error-eigenval-laplace-L}
\end{figure}

\begin{figure}[htb]
  \centering
  \subfloat[Linear finite elements]{
    \includegraphics[width=5cm,angle=-90]{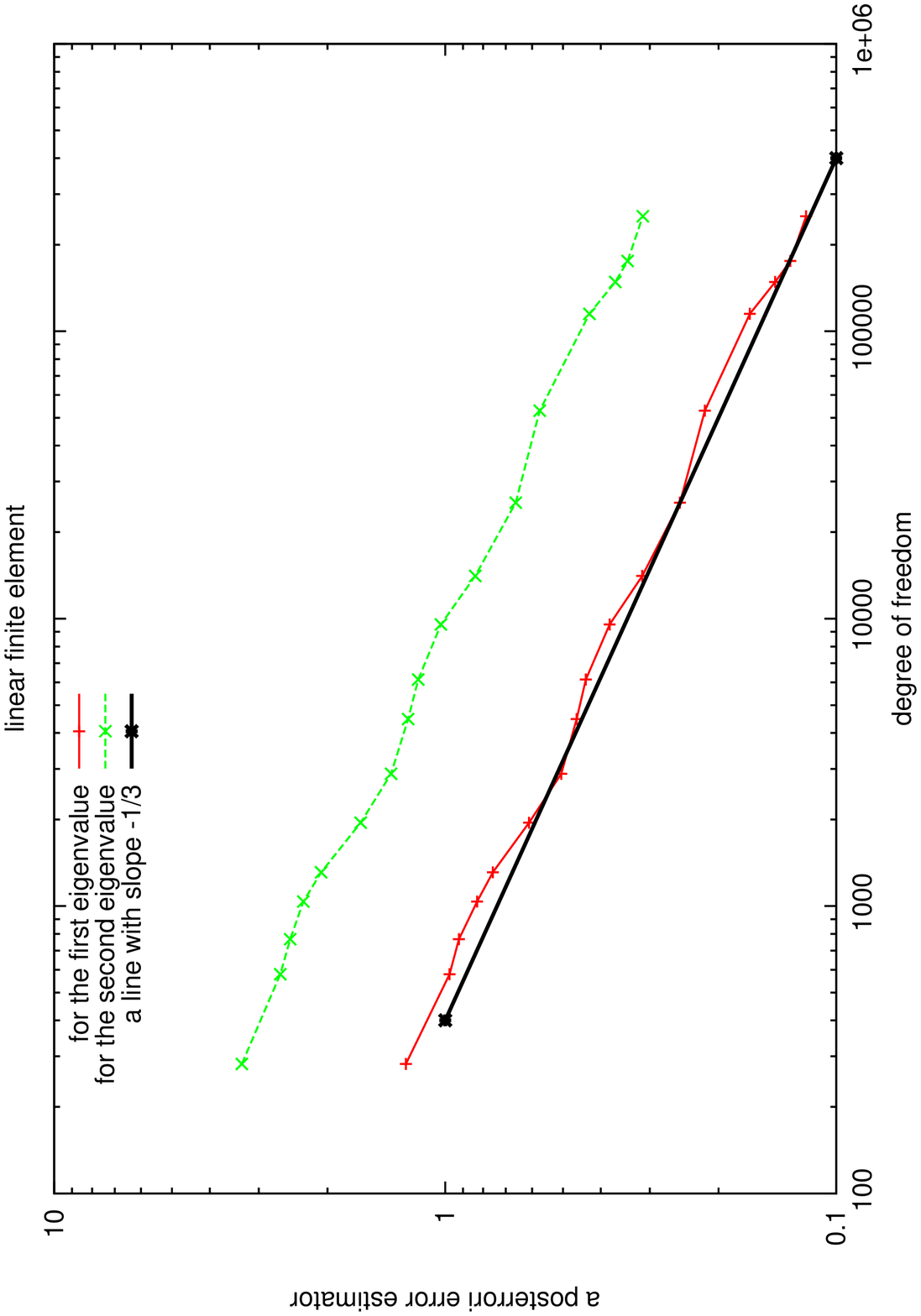}
  }
  \,
  \subfloat[Quadratic finite elements]{
    \includegraphics[width=5.0cm,angle=-90]{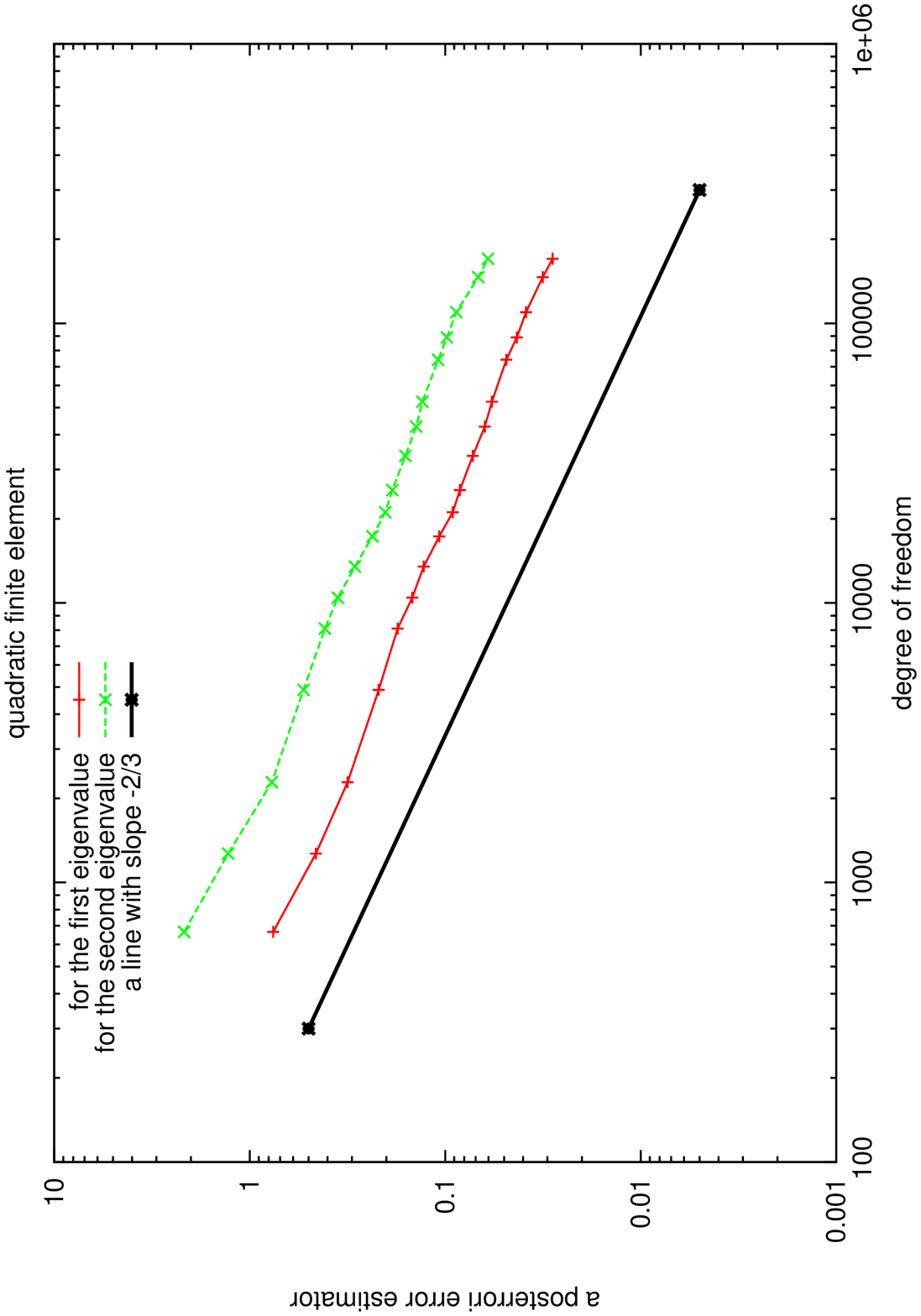}
  }
  \caption{The convergence curves of $\eta_h(M_h(\lambda_1), \Omega)$ and $\eta_h(M_h(\lambda_2), \Omega)$ for {\bf Example 3}}
  \label{figure-error-laplace-L}
\end{figure}

\section{Concluding remarks}\label{concluding-remark}\setcounter{equation}{0}
We have studied the convergence rate of an adaptive finite element algorithm for elliptic multiple eigenvalue problems.

\subsection{The first $N$ eigenvalues}
The adaptive algorithm for computing the multiple eigenvalue and eigenfunctions uses explicitly the
multiplicity of the eigenvalue, as well as its position in the ordered list of eigenvalues. More precisely, the index and the multiplicity of the eigenvalue must be known and provided to the
adaptive algorithm. However, for most of applications, we do not know these information, which makes the directly computation of a multiple eigenvalue and its corresponding eigenfunctions not realistic.

Fortunately, in most realistic computation, people usually need to solve the smallest $N$ eigenvalues,  or the largest $N$ eigenvalues, or the $N$ eigenvalues which is close to some special values. By some simple deduction, we can easily extend our results for some multiple eigenvalue $\lambda$ to this case.  We take the case of solving the smallest $N$ eigenvalues as an example to illustrate it, the other two cases are similar.

We consider the approximation for the  smallest $N$ eigenvalues of  (\ref{eigen}) and its corresponding eigenfunctions. If not accounting the multiplicity,  we assume the $N$ smallest eigenvalues belongs to the first $m$ eigenvalues with multiplicity of each eigenvalue being $q_i$, $\sum_{i=1}^{m-1} q_i  < N \leq \sum_{i=1}^{m} q_i$.
Let $(\lambda_{i, h}, u_{i, h}), i=1, \cdots, N$ be the first $N$ eigenpairs of (\ref{fe-eigen}).

Similar to the case of multiple eigenvalue, for $U_h = (u_{1, h}, u_{2, h}, \cdots, u_{N, h}) \in (V_h)^N$,
we  define
\begin{eqnarray*}
\eta^2_h(U_h, T)=\sum_{i=1}^N\eta^2_h(u_{i, h}, T) \quad \textnormal{and} \quad osc^2_h(U_h, T)=\sum_{i=1}^N osc^2_h(u_{i, h}, T), ~\forall T\in\mathcal{T}_h,
\end{eqnarray*}
and
\begin{eqnarray*}
\eta^2_h(U_h, \Omega)=\sum_{T\in\mathcal{T}_h} \eta^2_h(U_h, T) \quad \textnormal{and} \quad osc^2_h(U_h, \Omega)=\sum_{T\in\mathcal{T}_h} osc^2_h(U_h, T).
\end{eqnarray*}

Then, we design the adaptive finite element algorithm for solving the first smallest $N$ eigenpairs of (\ref{eigen}) as follows:

\vskip 0.1cm
\begin{algorithm}\label{algorithm-AFEM-eigen-2}~
 Choose a parameter $0 < \theta <1.$
\begin{enumerate}
\item Pick a given mesh $\mathcal{T}_0$, and let $k=0$.
\item Solve the system (\ref{fe-eigen}) on $\mathcal{T}_k$ to get the discrete solution  $(\lambda_{l, h_k}, u_{l, h_k})(l=1, \cdots, N)$.
\item Compute local error indictors $\eta_{h_k}(u_{h_{l, k}}, T)(l=1, \cdots, N)$ for all $T \in \mathcal{T}_{h_k}$.
\item Construct $\mathcal{M}_{h_k} \subset \mathcal{T}_{h_k}$ by {\bf D\"{o}rfler marking strategy} and parameter
 $\theta$.
\item Refine $\mathcal{T}_{h_k}$ to get a new conforming mesh $\mathcal{T}_{h_{k+1}}$ by Procedure {\bf REFINE}.
\item Let $k=k+1$ and go to 2.
\end{enumerate}
\end{algorithm}
\vskip 0.1cm
The {\bf D\"{o}rfler marking strategy} in the algorithm above is defined as follows.
\begin{center}
\begin{tabular}{|p{120mm}|}\hline
\begin{center}
{\bf D\"{o}rfler marking strategy}
 \end{center}\\
Given a parameter $0<\theta < 1$.
\begin{enumerate}
 \item Construct a   subset $\mathcal{M}_{h_k}$ of
 $\mathcal{T}_{h_k}$ by selecting some elements
 in $\mathcal{T}_{h_k}$ such that
\begin{eqnarray*}
 \sum_{T\in \mathcal{M}_{h_k}} \eta_{h_k}^2(U_{h_k}, T)  \geq \theta
  \eta_{h_k}^2(U_{h_k},\Omega).
 \end{eqnarray*}
 \item Mark all the elements in $\mathcal{M}_{h_k}$.
 \end{enumerate}\\
 \hline
\end{tabular}
\end{center}

We obtain the following results  from the similar arguments
in Section \ref{convergence-sec} and Section \ref{complexity}.
\begin{theorem}\label{thm-convergence-rate-eigenspace-2}
Let  $\lambda_1 < \lambda_2 \leq \cdots \leq \lambda_N $  be the first   $N$  eigenvalues of (\ref{eigen}),
   which belong to the first $m$ eigenvalues if not accounting the multiplicity, with multiplicity of each eigenvalue being $q_i$ and the corresponding eigenfunction space being $M(\lambda_i)$,  $\sum_{i=1}^{m-1} q_i  < N \leq \sum_{i=1}^{m} q_i$. Let
$\{(\lambda_{l, h_k}, u_{l, h_k}), l=1,\cdots,N\}_{k\in \mathbb{N}_0}$ be a sequence of finite element
 solutions
 produced by {\bf Algorithm \ref{algorithm-AFEM-eigen-2}}. Set $M_{h_k}(\lambda_i) = span\{u_{k_i + 1, h_k}, \cdots, u_{k_i + q_i, h_{k}}\}$, with $k_i = \sum_{j=1}^{i-1} q_j$.  Assume $N = \sum_{i=1}^{m} q_i$.   If $h_0\ll 1$,
 then there exists constant  $\alpha \in (0,1)$,
  depending only on the shape regularity of meshes, $ C_a $ and $c_a$, the parameter $\theta$ used by
 {\bf Algorithm \ref{algorithm-AFEM-eigen-2}}, such that
 the $k$-th iterate solution $(\lambda_{h_k, l}, u_{h_k, l}) (l=1, \cdots, q)$ of
 {\bf Algorithm \ref{algorithm-AFEM-eigen-2}} satisfies
\begin{eqnarray}\label{convergence-neq-2}
\delta^2_{H_0^1(\Omega)}(\mathcal{M}, \mathcal{M}_{h_k}) \lc \alpha^{2k},
  \end{eqnarray}
  where
  \begin{eqnarray*}
  \mathcal{M} = \left(  \begin{array}{c} M(\lambda_1) \\ M(\lambda_2) \\ \cdots \\ M(\lambda_m) \end{array} \right), ~~~~~
 \mathcal{M}_{h_k} = \left(  \begin{array}{c} M_{h_k}(\lambda_1) \\ M_{h_k}(\lambda_2) \\ \cdots \\ M_{h_k}(\lambda_m) \end{array} \right),
  \end{eqnarray*}
  and
\begin{eqnarray}
\delta^2_{H_0^1(\Omega)}(\mathcal{M}, \mathcal{M}_{h_k}) = \sum_{i=1}^{m} \delta^2_{H_0^1(\Omega)}(M(\lambda_i), M_{h_k}(\lambda_i)).
  \end{eqnarray}

\end{theorem}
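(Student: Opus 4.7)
The plan is to reduce the $N$-eigenvalue case to an $m$-fold replication of the single-multiple-eigenvalue argument already established for Theorem \ref{thm-convergence-rate-eigenspace}, by regarding the concatenation of orthonormal bases of $M(\lambda_1),\ldots,M(\lambda_m)$ as a single vector quantity on which the contraction and a posteriori bounds of Sections \ref{adaptive-algorithm}--\ref{convergence-sec} act componentwise. Concretely, for each $i=1,\ldots,m$ fix an orthonormal basis of $M(\lambda_i)$ and let $U^{(i)}$ denote the corresponding vector; let $\widetilde{U}=(U^{(1)},\ldots,U^{(m)})$, define $E_{h_k}\widetilde{U}=(E_{h_k}^{(1)}U^{(1)},\ldots,E_{h_k}^{(m)}U^{(m)})$ with $E_{h_k}^{(i)}$ the spectral projection associated to $\lambda_i$, and set
\begin{eqnarray*}
\|\widetilde{U}-E_{h_k}\widetilde{U}\|_{a,\Omega}^2 &=& \sum_{i=1}^{m}\|U^{(i)}-E_{h_k}^{(i)}U^{(i)}\|_{a,\Omega}^2,\\
\eta_{h_k}^2(E_{h_k}\widetilde{U},T) &=& \sum_{i=1}^{m}\eta_{h_k}^2(E_{h_k}^{(i)}U^{(i)},T),
\end{eqnarray*}
and analogously for $\mathrm{osc}_{h_k}$. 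Since the error estimator driving the algorithm is $\eta_{h_k}(U_{h_k},\cdot)$ with $U_{h_k}=(u_{1,h_k},\ldots,u_{N,h_k})$, it decomposes exactly as $\eta_{h_k}^2(U_{h_k},T)=\sum_{i=1}^m \eta_{h_k}^2(U_{h_k}^{(i)},T)$, where $U_{h_k}^{(i)}$ collects the $q_i$ discrete eigenfunctions attached to $\lambda_i$.

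First I would extend Lemma \ref{eta-etah} and Lemma \ref{mark-eta-etah} to the collective quantities: summing over $i$ yields equivalences
\begin{eqnarray*}
\eta_{h_k}^2(U_{h_k},T)\cong \eta_{h_k}^2(E_{h_k}\widetilde{U},T),\qquad
\mathrm{osc}_{h_k}^2(U_{h_k},T)\cong \mathrm{osc}_{h_k}^2(E_{h_k}\widetilde{U},T),
\end{eqnarray*}
uniformly in $T$, provided $h_0\ll 1$. Consequently, the D\"orfler property for $\eta_{h_k}(U_{h_k},\cdot)$ with parameter $\theta$ transfers to a D\"orfler property for $\eta_{h_k}(E_{h_k}\widetilde{U},\cdot)$ with some $\theta'\in(0,1)$ depending on $\theta$, $q_1,\ldots,q_m$ and $h_0$. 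Next I would assemble, for each $i$ and each basis function $u\in M(\lambda_i)$, the associated source-problem solution $w^{h_k,(i)}$ as in Theorem \ref{thm-eigen-boundary}, and stack them into a single vector $\widetilde{W}^{h_k}$ of length $N$. The vector-form boundary-value contraction Theorem \ref{convergence-boundary} then applies directly to $\widetilde{W}^{h_k}$, and the perturbation bounds of Lemma \ref{lemma-bound-eigen}, applied componentwise and summed over $i$, transfer the contraction from the source problems to the eigenvalue problems, producing constants $\gamma>0$ and $\alpha\in(0,1)$ with
\begin{eqnarray*}
\|\widetilde{U}-E_{h_{k+1}}\widetilde{U}\|_{a,\Omega}^2+\gamma\,\eta_{h_{k+1}}^2(E_{h_{k+1}}\widetilde{U},\Omega)
\le \alpha^{2}\bigl(\|\widetilde{U}-E_{h_k}\widetilde{U}\|_{a,\Omega}^2+\gamma\,\eta_{h_k}^2(E_{h_k}\widetilde{U},\Omega)\bigr),
\end{eqnarray*}
i.e.\ the exact analogue of Theorem \ref{error-reduction} for the stacked system. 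Iterating gives $\|\widetilde{U}-E_{h_k}\widetilde{U}\|_{a,\Omega}^2\lesssim \alpha^{2k}$, and since each summand is nonnegative one obtains $\|U^{(i)}-E_{h_k}^{(i)}U^{(i)}\|_{a,\Omega}^2\lesssim \alpha^{2k}$ for every $i$.

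Finally I would deduce the gap estimate exactly as in the proof of Theorem \ref{thm-convergence-rate-eigenspace}: for each $i$, expanding an arbitrary $u\in M(\lambda_i)$ with $\|u\|_b=1$ in the chosen orthonormal basis and using Cauchy--Schwarz gives $\|u-E_{h_k}^{(i)}u\|_{a,\Omega}^2\lesssim \|U^{(i)}-E_{h_k}^{(i)}U^{(i)}\|_{a,\Omega}^2\lesssim \alpha^{2k}$, so $d_{H_0^1(\Omega)}^2(M(\lambda_i),M_{h_k}(\lambda_i))\lesssim \alpha^{2k}$; Lemma \ref{thm-dxy-dyx} then yields the reverse direction $d_{H_0^1(\Omega)}(M_{h_k}(\lambda_i),M(\lambda_i))\lesssim \alpha^{k}$, hence $\delta_{H_0^1(\Omega)}^2(M(\lambda_i),M_{h_k}(\lambda_i))\lesssim \alpha^{2k}$, and summing over $i=1,\ldots,m$ produces \eqref{convergence-neq-2}.

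The main obstacle I anticipate is the first step, namely the careful bookkeeping that shows D\"orfler marking driven by the pooled estimator $\eta_{h_k}(U_{h_k},\cdot)$ simultaneously forces sufficient reduction of the pooled auxiliary quantity $\eta_{h_k}(E_{h_k}\widetilde{U},\cdot)$: the constants degrade with $m$ (through the $q_i$'s), but they stay bounded away from $0$ because each per-eigenvalue equivalence in Lemma \ref{eta-etah} is sharp up to a factor depending only on $q_i$ and on $\rho_\Omega(h_0)\delta_{h_0}(\lambda_i)$, so choosing $h_0$ small enough makes the overall $\theta'$ strictly positive. Once this is done, the rest of the argument is a direct and mechanical repetition of the proofs in Sections \ref{adaptive-algorithm}--\ref{convergence-sec}, applied to the concatenated vector $\widetilde{U}$ and its associated source-problem lift $\widetilde{W}^{h_k}$.
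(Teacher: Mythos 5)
Your proposal is correct and follows essentially the same route the paper intends (the paper itself gives no detailed proof, only remarking that the result follows ``from the similar arguments in Section \ref{convergence-sec} and Section \ref{complexity}''). You have spelled out precisely what that reduction looks like: stack the per-eigenvalue blocks into one $N$-vector, observe that the pooled estimator decomposes blockwise so that Lemma \ref{eta-etah} and Lemma \ref{mark-eta-etah} transfer the D\"orfler property to the pooled auxiliary quantity $\eta_{h_k}(E_{h_k}\widetilde{U},\cdot)$ with constants controlled by $\max_i q_i$ and by the worst $\rho_\Omega(h_0)\delta_{h_0}(\lambda_i)$ over the finitely many clusters, then invoke the vector boundary-value contraction of Theorem \ref{convergence-boundary} for the stacked source problem and Lemma \ref{lemma-bound-eigen} componentwise, and finish via the Cauchy--Schwarz and Lemma \ref{thm-dxy-dyx} argument from the proof of Theorem \ref{thm-convergence-rate-eigenspace} applied cluster by cluster before summing over $i$; all of this is exactly the intended argument, and the hypothesis $N=\sum_{i=1}^m q_i$ is what guarantees each cluster is complete so the per-$i$ spectral projections and gap estimates make sense.
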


\begin{theorem}\label{thm-optimal-complexity-eigenspace-2}
Let  $\lambda_1 < \lambda_2 \leq \cdots \leq \lambda_N $  be the first   $N$  eigenvalues of (\ref{eigen}),
   which belong to the first $m$ eigenvalues if not accounting the multiplicity, with multiplicity of each eigenvalue being $q_i$ and the corresponding eigenfunction space being $M(\lambda_i)$,  $\sum_{i=1}^{m-1} q_i  < N \leq \sum_{i=1}^{m} q_i$.  Let
$\{(\lambda_{l, h_k}, u_{l, h_k}), l=1,\cdots,N\}_{k\in \mathbb{N}_0}$ be a sequence of finite element
 solutions
 produced by {\bf Algorithm \ref{algorithm-AFEM-eigen-2}}. Set $M_{h_k}(\lambda_i) = span\{u_{k_i + 1, h_k}, \cdots, u_{k_i + q_i, h_{k}}\}$, with $k_i = \sum_{j=1}^{i-1} q_j$.
 If  Assumption \ref{assump-5.1} are satisfied for {\bf Algorithm \ref{algorithm-AFEM-eigen-2}} and $N = \sum_{i=1}^{m} q_i$, then the $n$-th iterate solution space $M_{h_n}(\lambda)$ of
 {\bf Algorithm \ref{algorithm-AFEM-eigen-2}} satisfies the quasi-optimal bound
 \begin{eqnarray*}
  \delta^2_{H_0^1(\Omega)}(\mathcal{M}(\lambda), \mathcal{M}_{h_n}(\lambda)) \lc (\#\mathcal{T}_{h_n}
  -\#\mathcal{T}_{h_0})^{-2s},
  \end{eqnarray*}
  provided $h_0\ll 1$,
 where the hidden constant depends on the exact solution $(\lambda, M(\lambda))$ and
 the discrepancy between
 $\theta$ and $\frac{1}{q^2}\frac{C_2^2 \gamma}{C_3^2 ( C_1^2 + (1 + 2 C_{\ast}^2 C_1^2)\gamma
)}$.
\end{theorem}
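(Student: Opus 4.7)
The plan is to mimic the proof of Theorem \ref{thm-optimal-complexity-eigenspace} almost verbatim, but working with the aggregate error quantity
\[
\mathcal{E}^2(h_k) \;=\; \sum_{i=1}^{m}\Big(\|U^{(i)}-E_{h_k}U^{(i)}\|_{a,\Omega}^2+\gamma\,\eta_{h_k}^2(E_{h_k}U^{(i)},\Omega)\Big),
\]
where $U^{(i)}=(u_{k_i+1},\dots,u_{k_i+q_i})$ is an orthonormal basis of $M(\lambda_i)$. Since the a posteriori estimator used by \textbf{Algorithm \ref{algorithm-AFEM-eigen-2}} decomposes as $\eta_{h_k}^2(U_{h_k},T)=\sum_{i=1}^{m}\eta_{h_k}^2(U_{h_k}^{(i)},T)$, and since Theorem \ref{thm-convergence-rate-eigenspace-2} (already in force) gives the linear convergence $\mathcal{E}^2(h_k)\lesssim\alpha^{2k}\mathcal{E}^2(h_0)$ for the aggregate quantity, the two ingredients I need are a Dörfler-type quasi-optimal marking result and a Stevenson-type cardinality bound, both expressed in terms of $\mathcal{E}(h_k)$.

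First I would establish the analogue of Lemma \ref{complexity-upper-bound-of-dof} for the aggregate. Fix $\varepsilon=\tfrac{1}{\sqrt 2}\beta_1\mathcal{E}(h_k)$ and choose a refinement $\mathcal{T}_{h_\varepsilon}$ of $\mathcal{T}_{h_0}$ with minimal DOF such that, for every distinct eigenvalue $\lambda_i$ and every $l=1,\dots,q_i$,
\[
\|u_{k_i+l}-E_{h_\varepsilon}u_{k_i+l}\|_{a,\Omega}^2+(\gamma+1)\,osc_{h_\varepsilon}^2(E_{h_\varepsilon}u_{k_i+l},\Omega)\le\frac{\varepsilon^2}{N}.
\]
By the definition of $\mathcal{A}^s$ applied to each $u_{k_i+l}\in\mathcal{A}^s$, this gives
\[
\#\mathcal{T}_{h_\varepsilon}-\#\mathcal{T}_{h_0}\;\lesssim\;\Big(\tfrac{1}{\sqrt 2}\beta_1\Big)^{-1/s}\mathcal{E}(h_k)^{-1/s}\,N^{\frac{1}{2s}-1}\sum_{i,l}|u_{k_i+l}|_s^{1/s},
\]
after absorbing an $N^{1/(2s)}$ factor produced by the $1/N$ splitting of $\varepsilon^2$. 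Repeating the auxiliary-boundary-value argument of Lemma \ref{complexity-upper-bound-of-dof} on each block $U^{(i)}$ (the blocks are independent up to the Dörfler choice) and invoking Theorem \ref{convergence-boundary}, Lemma \ref{lemma-bound-eigen} and Corollary \ref{complexity-eigen-optimal-marking} on each block, I would show that the overlay $\mathcal{T}_{h_{k,+}}=\mathcal{T}_{h_k}\oplus\mathcal{T}_{h_\varepsilon}$ satisfies a Dörfler property for the aggregate estimator with parameter $\check\theta>\theta$, provided $\theta$ lies strictly below $\tfrac{1}{N^2}\tfrac{C_2^2\gamma}{C_3^2(C_1^2+(1+2C_\ast^2C_1^2)\gamma)}$ (Assumption \ref{assump-5.1}(1) with $q$ replaced by $N$). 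Minimal cardinality of $\mathcal{M}_{h_k}$ then yields
\[
\#\mathcal{M}_{h_k}\;\lesssim\;\mathcal{E}(h_k)^{-1/s}\,\Big(\sum_{i,l}|u_{k_i+l}|_s^{1/s}\Big).
\]

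Combining this cardinality estimate with the geometric decay $\mathcal{E}(h_k)\lesssim\alpha^{k}$ from Theorem \ref{thm-convergence-rate-eigenspace-2} and telescoping via Lemma \ref{complexity-refine}, exactly as in the proof of Theorem \ref{thm-optimal-complexity}, I obtain
\[
\#\mathcal{T}_{h_n}-\#\mathcal{T}_{h_0}\;\lesssim\;\mathcal{E}(h_n)^{-1/s}\Big(\sum_{i,l}|u_{k_i+l}|_s^{1/s}\Big),
\]
which rearranges to $\mathcal{E}^2(h_n)\lesssim(\#\mathcal{T}_{h_n}-\#\mathcal{T}_{h_0})^{-2s}$. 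Finally, the passage from $\mathcal{E}(h_n)$ to the gap $\delta_{H_0^1(\Omega)}^2(\mathcal{M},\mathcal{M}_{h_n})=\sum_{i=1}^{m}\delta_{H_0^1(\Omega)}^2(M(\lambda_i),M_{h_n}(\lambda_i))$ follows block-by-block from Theorem \ref{thm-error-estimator-space} together with $osc_{h_n}\le\eta_{h_n}$.

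The main obstacle I anticipate is the cross-coupling between distinct eigenspaces: the Dörfler marking is performed against the sum $\eta_{h_k}^2(U_{h_k},\Omega)=\sum_i\eta_{h_k}^2(U_{h_k}^{(i)},\Omega)$, but the auxiliary boundary value problems (\ref{auxiliary-boundary-eq}) are defined per eigenvalue, and the $r(h)$-type perturbation estimates in Theorem \ref{thm-eigen-boundary} and Lemma \ref{lemma-bound-eigen} are established only within a single eigenspace. To transfer the aggregate marking into a uniform Dörfler condition for each block's boundary value problem, I will need to work with the product space $\prod_{i=1}^{m}M(\lambda_i)$ and the block-diagonal source problem $LW^{h_k,(i)}=\sum_{j}\alpha_{h_k,j}(u_{k_i+l})\lambda_{h_k,j}u_{h_k,j}$ restricted to each spectral projector $E_{h_k}(\lambda_i)$. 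The crucial observation that makes this work is that for $h_0\ll 1$ the spectral projectors $E_{h_k}(\lambda_i)$ are $L^2$-orthogonal up to $\mathcal{O}(\rho_\Omega(h)\delta_h(\lambda_i))$ (Corollary \ref{coro2.2}), so the cross-terms in the aggregate estimator decouple to leading order and Lemmas \ref{mark-eta-etah}--\ref{mark-etah-eta} can be applied blockwise with a uniform reduction factor $\theta'=0.99\,\theta/N^2$, which is precisely the source of the $N^2$ in Assumption \ref{assump-5.1}(1).
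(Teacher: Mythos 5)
The paper gives no proof of Theorem~\ref{thm-optimal-complexity-eigenspace-2}; it merely asserts that it follows from ``similar arguments in Section~\ref{convergence-sec} and Section~\ref{complexity}.'' Your proposal is, in essence, a fleshed-out version of exactly what that sentence must mean: treat the $N$ eigenpairs as $m$ blocks, run the single-cluster machinery (Lemma~\ref{lemma-bound-eigen}, Theorem~\ref{error-reduction}, Corollary~\ref{complexity-eigen-optimal-marking}, Lemma~\ref{complexity-upper-bound-of-dof}) on the aggregate quantity $\mathcal{E}(h_k)$, split the tolerance $\varepsilon^2$ evenly over the $N$ eigenfunctions via the definition of $\mathcal{A}^s$, and close via Lemma~\ref{complexity-refine}. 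This is the right plan and the one the paper intends.

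Two clarifications are worth making. First, you cite Corollary~\ref{coro2.2} to argue that the spectral projectors $E_{h_k}(\lambda_i)$ are $L^2$-orthogonal only up to $\mathcal{O}(\rho_\Omega(h)\delta_h(\lambda_i))$. That corollary is about vectors \emph{within} a single eigenspace $M(\lambda)$; across distinct blocks you have something stronger: the discrete eigenfunctions $u_{h,l}$ are $b$-orthonormal by construction, so $M_h(\lambda_i)\perp_b M_h(\lambda_j)$ for $i\neq j$ and the projectors $E_{h_k}(\lambda_i)$ are exactly mutually orthogonal. Hence the cross-block coupling you worry about is even milder than you think, and the argument decouples cleanly. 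Second, be careful not to read ``Dörfler condition for each block's boundary value problem'' as a per-block Dörfler condition on $\eta_{h_k}^2(U_{h_k}^{(i)},\cdot)$ for every $i$ separately: that would be false, since a marking set that concentrates on blocks with large estimator may give almost no mass to a block whose contribution is tiny. What the argument actually needs, and what you do implicitly use via the product space and block-diagonal source system, is the Dörfler condition for the \emph{aggregate} $\sum_i\tilde\eta_{h_k}^2(R_{h_k}W^{h_k,(i)},\cdot)$, which is precisely what Theorem~\ref{convergence-boundary} and Proposition~\ref{complexity-boundary-optimal-marking} consume since they are stated for vector systems of $N$ simultaneous equations. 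The transfer factor $\theta'$ you obtain via Lemmas~\ref{mark-eta-etah}--\ref{mark-etah-eta} applied blockwise and summed is controlled by $(\max_i q_i)^{-2}$; your $N^{-2}$ is safe but slightly conservative (the paper's statement writes $q^{-2}$ for this threshold with $q$ left undefined in that context, presumably meaning $\max_i q_i$). Finally, one small organizational point: the geometric decay of the weighted quantity $\mathcal{E}^2(h_k)$ does not come directly from Theorem~\ref{thm-convergence-rate-eigenspace-2}, which is about the gap $\delta^2_{H_0^1(\Omega)}(\mathcal{M},\mathcal{M}_{h_k})$; it comes from the Theorem~\ref{error-reduction}-analogue for the aggregate that you must establish along the way, and the gap bound is recovered at the end via Theorem~\ref{thm-error-estimator-space} blockwise, exactly as you do in your closing paragraph.
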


%

\subsection{Steklov eigenvalue problem}

Now we turn to address how to apply the same arguments to the Steklov problem
that consists in finding $\lambda\in \mathbb{R}$ and $u\neq 0$ such that
\begin{eqnarray*}\Label{steklov-problem}
\left\{\begin{array}{rl}
-\nabla\cdot (A\nabla u) +cu &= 0 \quad \mbox{in} \quad \Omega,\\[0.2cm]
(A\nabla u)\cdot \overrightarrow{n} &= \lambda uv\quad\mbox{on}~~\partial\Omega,\end{array}\right.
\end{eqnarray*}
where
 $\overrightarrow{n}$ is the outward unit normal vector of $\Omega$ on $\partial \Omega$.

 We set $a(\cdot, \cdot) =(A \nabla \cdot, \nabla \cdot)_{\Omega} + (c \cdot, \cdot)_{\Omega}$,
 $b(\cdot,\cdot)=(\cdot,\cdot)_{\partial \Omega}$
 and consider the
non-homogeneous Neumann problem as a model problem as follows:
\begin{equation}\Label{problem-neumann}
\left\{\begin{array}{rl}
L u_i &= 0 \,\,\, \mbox{in} \quad \Omega, ~~i=1, \cdots, N,\\[0.2cm]
(A\nabla u_i)\cdot \overrightarrow{n} &= f_i\,\,\,\mbox{on}~~\partial\Omega. \end{array}\right.
\end{equation}
Define the element residual $\tilde{\mathcal{R}}_T(u_{i,h})$ and the jump residual $\tilde{J}_E(u_{i,h})$ for \eqref{problem-neumann} as follows:
 \begin{eqnarray*}\label{residual-neumann}
  \tilde{\mathcal{R}}_T(u_{i,h}) &=& \nabla\cdot(A\nabla u_{i,h})-c u_{i,h}\quad \mbox{in}~ T\in
  \mathcal{T}_h,\\
  \tilde{J}_E(u_{i,h}) &=& \left\{ \begin{array}{rcl} [[A\nabla u_{i,h}]]_E \cdot \nu_E  \quad \mbox{on}~ E\in
  \mathcal{E}_h\\[1ex]
\displaystyle A\nabla u_{i,h} \cdot \overrightarrow{n} -f_i  \quad \mbox{on}~E\in  \mathcal{S}_{h},\end{array} \right.
\end{eqnarray*}
where $\mathcal{S}_h$ denote the set of boundary faces. For $T\in \mathcal{T}_h$, we denote the local error indicator
  $\tilde{\eta}_h(u_{i,h}, T)$ by
  \begin{eqnarray*}\label{error-indicator}
   \tilde{\eta}^2_h(u_{i,h}, T) =h_T^2\|\tilde{\mathcal{R}}_T(u_{i,h})\|_{0,T}^2
   + \sum_{E\in \mathcal{E}_h\cup \mathcal{S}_h,E\subset\partial T
   } h_E \|\tilde{J}_E(u_{i,h})\|_{0,E}^2
  \end{eqnarray*}
and the oscillation $\widetilde{osc}_h(u_{i,h},T)$ by
\begin{eqnarray}\label{local-oscillation}
\widetilde{osc}^2_h(u_{i,h},T) = h_T^2\|\tilde{\mathcal{R}}_{T}(u_{i,h})-\overline{\tilde{\mathcal{R}}_{T}(u_{i,h})}\|_{0,T}^2
   + \sum_{E\in \mathcal{E}_h\cup \mathcal{S}_h,E\subset\partial T
   } h_E \|\tilde{J}_E(u_{i,h})-\overline{\tilde{J}_E(u_{i,h})}\|_{0,E}^2.
\end{eqnarray}
We see that Lemma \ref{lemma-osc-L-neq} is also valid for \eqref{local-oscillation}.

In context of Steklov eigenvalue problems, we define
 \begin{eqnarray*}
  \mathcal{R}_T(E_h u) &=& \nabla\cdot(A\nabla E_h u)-c E_h u~~ \mbox{in}~ T\in
  \mathcal{T}_h,\label{residual-eigen-multi-steklov}\\
  J_E(E_h u) &=&
\left\{ \begin{array}{rcl} [[A\nabla E_h u]]_E \cdot \nu_E  \quad \mbox{on}~ E\in
  \mathcal{E}_h\\[1ex]
\displaystyle A\nabla E_h u \cdot \overrightarrow{n} -\lambda^h E_hu \quad \mbox{on}~E\in  \mathcal{S}_{h},\end{array} \right.
\end{eqnarray*}
where $\overrightarrow{n}$ denotes the outward unit normal vector on $E\in \mathcal{S}_h$. For $T\in \mathcal{T}_h$, we define the local error indicator
  $\eta_h(E_h u, T)$ by
  \begin{eqnarray*}\label{error-local-eigen-multi}
   \eta^2_h(E_h u,T) = h_T^2\|\mathcal{R}_T(E_h u)\|_{0,T}^2 + \sum_{E\in \mathcal{E}_h\cup \mathcal{S}_h,E\subset\partial T
   } \|J_E(E_h u)\|_{0,E}^2
  \end{eqnarray*}
and the oscillation $osc_h(E_h u,T)$ by
\begin{eqnarray*}\label{osc-local-eigen-multi}
osc^2_h(E_h u,T) = h_T^2\|\mathcal{R}_{T}(E_h u)-\overline{\mathcal{R}_{T}(E_h u)}\|_{0,T}^2
   + \sum_{E\in \mathcal{E}_h\cup \mathcal{S}_h,E\subset\partial T
   } h_E \|J_E(E_h u)-\overline{J_E(E_h u)}\|_{0,E}^2.
\end{eqnarray*}
We obtain by  using the same argument  that Theorem \ref{thm-error-estimator-space},  Theorem \ref{error-reduction},  Theorem \ref{thm-convergence-rate-eigenspace-2}, and  Theorem \ref{thm-optimal-complexity-eigenspace-2} are valid for the Steklov problem with
multiple eigenvalues.\vskip 0.2cm

\subsection{The inexact numerical solutions}
In our numerical analysis above, for convenience, we assume that the algebraic eigenvalue problem is exactly solved and the numerical integration is exact. Indeed, the same conclusion can be expected if all the numerical errors are taken into account, including both the error resulting from the inexact solving of the algebraic eigenvalue problem and the error coming from the inexact numerical integration. Suppose $(\lambda, u)$ is an eigenpair with the multiplicity of $\lambda$ being $q$, the exact solution on mesh $\mathcal{T}_k$
 are $\{(\lambda_{h, i}, u_{h, i})\}_{i=1}^{q}$, and the   the solution considering the numerical error are $\{(\hat{\lambda}_{h, i}, \hat{u}_{h, i})\}_{i=1}^{q}$. If the numerical errors resulting from the solution of algebraic system and the numerical integration are small enough, say, satisfy
\begin{eqnarray*}
\sum_{i = 1}^{q}\big(\|u_{h, i} - \hat{u}_{h, i}\|_{a}^2 + |\lambda_{h, i} - \hat{\lambda}_{h, i}|\big) \lc r(h_0) \sum_{i = 1}^{q} \eta_{h}^2(\hat{u}_{h, i}, \Omega)
\end{eqnarray*}
with $r(h_0) \ll 1$ for $h_0 \ll 1$, then we have from the following triangle inequality
\begin{eqnarray*}
 \| u_i  - \hat{u}_{h, i}\|_a \leq   \| u_i  - u_{h, i}\|_a +  \| u_{h, i}  - \hat{u}_{h, i}\|_a,
 \end{eqnarray*}
 that  our main results obtained in this paper hold true for inexact algebraic solution and inexact numerical integration, too.

\end{document}